 \newtheorem{thm}{Theorem}[section]
 \newtheorem{cor}[thm]{Corollary}
 \newtheorem{lem}[thm]{Lemma}
 \newtheorem{prop}[thm]{Proposition}
 \theoremstyle{definition}
 \newtheorem{defn}[thm]{Definition}
 \theoremstyle{remark}
 \newtheorem{rem}{Remark}
 \numberwithin{equation}{section}
 \newcommand{\aab}{A_{\alpha,\beta}}
\newcommand{\anab}{A_{n\alpha,n\beta}}
\newcommand{\askew}{A^{\textnormal {{skew}}}_{\alpha,\beta}}
\newcommand{\aweks}{A^{\textnormal {{weks}}}_{\alpha,\beta}}
\newcommand{\xreg}{\mathsf X_{\textnormal {reg}}}
\newcommand{\enab}{E_{n\alpha,n\beta}}
 \DeclareMathOperator{\cosab}{Cos_{\alpha,\beta}}
 \DeclareMathOperator{\cosabt}{Cos_{\alpha,\beta}^\perp}
 \DeclareMathOperator{\cosabn}{Cos_{n\alpha,n\beta}}
 \DeclareMathOperator{\cosabnt}{Cos_{n\alpha,n\beta}^\perp}
\DeclareMathOperator{\cskew}{Cos_{\alpha,\beta}^{\textnormal{skew}}}
\DeclareMathOperator{\cskewt}{Cos_{\beta,\alpha}^{\textnormal{skew}}}
\DeclareMathOperator{\cosik}{Cos}
\begin{document}

\title[Complementary transmission conditions]{On pairs of complementary transmission conditions and on approximation of skew Brownian motion by snapping-out Brownian motions}

\author[A. Bobrowski]{Adam Bobrowski}

\address{
Lublin University of Technology\\
Nadbystrzycka 38A\\
20-618 Lublin, Poland}
\email{a.bobrowski@pollub.pl}

\author[E. Ratajczyk]{El\.zbieta Ratajczyk}
\address{
Lublin University of Technology\\
Nadbystrzycka 38A\\
20-618 Lublin, Poland}

\email{e.ratajczyk@pollub.pl}

\newcommand{\cxi}{(\xi_i)_{i\in \N} }
\newcommand{\lam}{\lambda}
\newcommand{\eps}{\varepsilon}
\newcommand{\ud}{\, \mathrm{d}}
\newcommand{\mud}{\mathrm{d}}
\newcommand{\pr}{\mathbb{P}}
\newcommand{\f}{\mathcal{F}}
\newcommand{\s}{\mathcal{S}}
\newcommand{\h}{\mathcal{H}}
\newcommand{\ai}{\mathcal{I}}
\newcommand{\R}{\mathbb{R}}
\newcommand{\C}{\mathbb{C}}
\newcommand{\Z}{\mathbb{Z}}
\newcommand{\N}{\mathbb{N}}
\newcommand{\Y}{\mathbb{Y}}
\newcommand{\e}{\mathrm {e}}
\newcommand{\tif}{\widetilde {f}}
\newcommand{\Id}{{\mathrm{Id}}}
\newcommand{\cic}{C_{\mathrm{mp}}}
\newcommand{\pol}{{\textstyle \frac 12}}
\newcommand{\es}{\textnormal T}

\newcommand{\cee}{\mathfrak  C[-\infty,\infty]}
\newcommand{\cod}{\mathfrak C_{\mathrm{odd}}[-\infty,\infty]}
\newcommand{\cev}{\mathfrak C_{\mathrm{even}}[-\infty,\infty]}
\newcommand{\cevr}{C_{\mathrm{even}}(\mathbb{R})}
\newcommand{\codr}{C_{\mathrm{odd}}(\mathbb{R})}
\newcommand{\cez}{C_0(0,1]}
\newcommand{\fod}{f_{\mathrm{odd}}} 
\newcommand{\fev}{f_{\mathrm{even}}} 
\newcommand{\sem}[1]{\mbox{$\left (\e^{t{#1}}\right )_{t \ge 0}$}}
\newcommand{\semi}[1]{\mbox{$\left ({#1}\right )_{t > 0}$}}
\newcommand{\semt}[2]{\mbox{$\left (\e^{t{#1}} \otimes_\varepsilon \e^{t{#2}} \right )_{t \ge 0}$}}
\newcommand{\tr}{\textcolor{red}}
\newcommand{\wt}{\widetilde}

\newcommand{\tcm}{\textcolor{magenta}}
\newcommand{\ecm}{\textcolor{olive}}
\newcommand{\tcb}{\textcolor{blue}}
\newcommand{\dx}{\ \textrm {d} x}
\newcommand{\dy}{\ \textrm {d} y}
\newcommand{\dz}{\ \textrm {d} z}
\newcommand{\di}{\textrm{d}}
\newcommand{\tcg}{\textcolor{green}}
\newcommand{\lc}{\mathfrak L_c}
\newcommand{\ls}{\mathfrak L_s}
\newcommand{\grat}{\lim_{t\to \infty}}
\newcommand{\grar}{\lim_{r\to 1-}}
\newcommand{\graR}{\lim_{R\to 1+}}
\newcommand{\grak}{\lim_{\kappa \to \infty}}
\newcommand{\gra}{\lim_{x\to \infty}}
\newcommand{\grae}{\lim_{\eps \to 0}}
\newcommand{\gran}{\lim_{n\to \infty}}
\newcommand{\rez}[1]{\left (\lam - #1\right)^{-1}}
\newcommand{\papa}{\hfill $\square$}
\newcommand{\papap}{\end{proof}}
\newcommand {\x}{\cerf}
\newcommand{\aex}{A_{\mathrm ex}}
\newcommand{\jcg}[1]{\left ( #1 \right )_{n\ge 1} }
\newcommand{\injtp}{\x \hat \otimes_{\varepsilon} \y}
\newcommand{\pin}{\|_{\varepsilon}}
\newcommand{\mc}{\mathcal}
\newcommand{\inter}{\left [0, 1\right ]}
\newcommand{\ha}{\mathfrak {H}}
\newcommand{\dom}[1]{D(#1)}
\newcommand{\mquad}[1]{\quad\text{#1}\quad}
\newcommand{\lil}{\lim_{\lam \to \infty}}
\newcommand{\lilz}{\lim_{\lam \to 0}}

\newcommand{\ce}{\mathcal C}
\newcommand{\cerl}{\mathfrak C[-\infty,0]}

\newcommand{\cerp}{\mathfrak C[0,\infty]}
\newcommand{\cer}{\mathfrak  C[-\infty,\infty]}
\newcommand{\cerf}{\mathfrak  C(\R_\sharp)}
\newcommand {\xo}{\mathfrak  C_{\textnormal{ov}}(\R_\sharp)}
\newcommand{\cerr}{\mathfrak C_R^\alpha}
\newcommand{\cef}{\mathfrak C_F^\alpha}
\newcommand{\comega}{C_\omega[-\infty,\infty]}
\newcommand{\fo}{f_{\textrm{o}}}
\newcommand{\fe}{f_{\textrm{e}}}
\newcommand{\wh}{\widehat}

\newcommand{\sq}{\gamma}
\newcommand{\su}{{\alpha+\beta}}
\newcommand{\al}{\alpha}
\newcommand{\be}{\beta}

\newcommand{\yy}{\mathcal{C}_{\al,\be}^{\textnormal{skew}}}
\newcommand{\zz}{\mathcal{D}_{\al,\be}^{\textnormal{weks}}}
\newcommand{\pp}{P_{\al,\be}^{\textnormal{skew}}}
\renewcommand{\qq}{Q_{\al,\be}^{\textnormal{weks}}}
\newcommand{\y}{\mathcal{C}_{\al,\be}}
\newcommand{\z}{\mathcal{D}_{\al,\be}}
\renewcommand{\q}{Q_{\al,\be}}
\newcommand{\p}{P_{\al,\be}}

\newcommand{\fl}{f_{\ell}}
\newcommand{\fr}{f_{\textnormal r}}
\newcommand{\fle}{f_1}
\newcommand{\fre}{f_2}
\newcommand{\wfl}{\widetilde{\fl}}
\newcommand{\wfr}{\widetilde{\fr}}
\newcommand{\gl}{g_1}
\newcommand{\gr}{g_2}
\newcommand{\gle}{g_1}
\newcommand{\gre}{g_2}
\newcommand{\hl}{h_{\textnormal l}}
\newcommand{\hr}{h_{\textnormal r}}
\newcommand{\apb}{{\alpha+\beta}}
\newcommand{\Ff}{\mathfrak{F}}
\newcommand{\Ffe}{k}
\newcommand{\F}{\mathcal{F}}

\newcommand{\rla}{R_\lam}

\newcommand{\fh}{h} 
\newcommand{\ph}{\phi} 

\newcommand{\ced}{C_{\textnormal{D}}}

\makeatletter
\newcommand{\normt}{\@ifstar\@normts\@normt}
\newcommand{\@normts}[1]{%
  \left|\mkern-1.5mu\left|\mkern-1.5mu\left|
   #1
  \right|\mkern-1.5mu\right|\mkern-1.5mu\right|
}
\newcommand{\@normt}[2][]{%
  \mathopen{#1|\mkern-1.5mu#1|\mkern-1.5mu#1|}
  #2
  \mathclose{#1|\mkern-1.5mu#1|\mkern-1.5mu#1|}
}
\makeatother

\thanks{Version of \today}
\subjclass{35B06, 46E05, 47D06, \\ 47D07, 47D09}
 \keywords{Invariant subspaces, projection, complemented spaces, transmission conditions}

\begin{abstract}Following our previous work on `perpendicular' boundary conditions, we show that transmission conditions  
\begin{align*}
f'(0-)=\al(f(0+)-f(0-)), \qquad 
f'(0+)=\be(f(0+)-f(0-)),
\end{align*}
describing so-called snapping out Brownian motions on the real line, are in a sense complementary to  
the transmission conditions 
\begin{align*}f(0-)=-f(0+), \qquad 
f''(0+) =\al f'(0-)+\be f'(0+). 
 \end{align*}
As an application of the analysis leading to this result, we also provide a deeper semigroup-theoretic insight into the theorem saying that as the coefficients $\alpha$ and $\beta$ tend to infinity but their ratio remains constant, the snapping-out Brownian motions converge to a skew Brownian motion. In particular, the transmission condition 
\[ \alpha f'(0+) = \beta f'(0-), \]
that characterizes the skew Brownian motion turns out to be complementary to 
\[ f(0-) = - f(0+), \beta f'(0+)=- \alpha f'(0-). \] 
\end{abstract}

\maketitle

\section{Introduction}

\subsection{Boundary conditions and invariant subspaces}
There is an intimate connection between boundary conditions for one-di\-men\-sion\-al Laplace operator $f\mapsto f''$ and invariant subspaces for the \emph{basic cosine family} $ \{ C(t),\, t \in \R\}$ defined by  
\begin{equation}\label{intro:0} C(t) f(x) = \pol [ f(x+t) + f(x-t) ], \qquad x \in \R, t \in \R.\end{equation}

To explain this, let $\cer$ be the space of continuous functions on the real line that have finite limits at plus and minus infinity; this space is equipped, as customary, with the supremum norm. The subspaces \[ \cod\subset \cee \mquad {and} \cev\subset \cee \] of odd and even functions, respectively, are invariant under  $ \{ C(t),\, t \in \R\}$, as seen as a family of operators in $\cee$.  This has an immediate bearing on generation theorems: Since the space of even functions is isometrically isomorphic to the space $\cerp$ of continuous functions on the non-negative half line that have limits at infinity, and since the even extension of a twice continuously differentiable function $f$ on $[0,\infty)$ is twice continuously differentiable on the entire line iff $f'(0)=0$, invariance of the space of even functions allows proving that the Laplace operator in $\cerp$ with domain described by the Neumann boundary condition $f'(0)=0$ is a cosine family generator. To wit, the cosine family generated by the latter operator can be given explicitly: 
\begin{equation}\label{intro:1} \operatorname{Cos}(t) \coloneqq RC(t)E, \qquad  t \in \R \end{equation}
where $E\colon\cerp\to\cer$ maps a function to its even extension, and $R\colon\cee \to \cerp$ maps a function to its restriction. The same analysis allows linking invariance of the subspace of odd functions with the Dirichlet boundary condition $f(0)=0$ --- this method of proving generation theorem is referred to as Lord Kelvin's method of images (see e.g. \cite{kelvin,bobmug,bobgre,bobgremur}; in \cite{feller}*{pp. 340-343} or \cite{kniga}*{Section 8.1} this method is used to prove generation theorems for the related semigroups, not the cosine families, but the analysis is analogous.)

This example is just the tip of the iceberg: as established in \cite{kosinusy}, \emph{nearly all Feller--Wentzel boundary conditions are related to invariant subspaces} of $\cer$, and, consequently, the cosine families (and semigroups) involved are given by the abstract Kelvin formula like \eqref{intro:1}, except that extension operator $E$ is of different form. For instance, the classical Robin boundary condition \begin{equation}\label{intro:2} f'(0)=\al f(0) \end{equation}  (where $\alpha\ge 0$ is a constant) 
 is linked with the invariant subspace of $g\in \cer$ that satisfy 
\begin{equation}\label{intro:3} g(-x) = g(x) - 2\al \int_0^x \e^{-\al (x-y)} g(y) \ud y, \qquad  x\ge 0. \end{equation}
\begin{center}
\begin{figure} 
\includegraphics[scale=0.24]{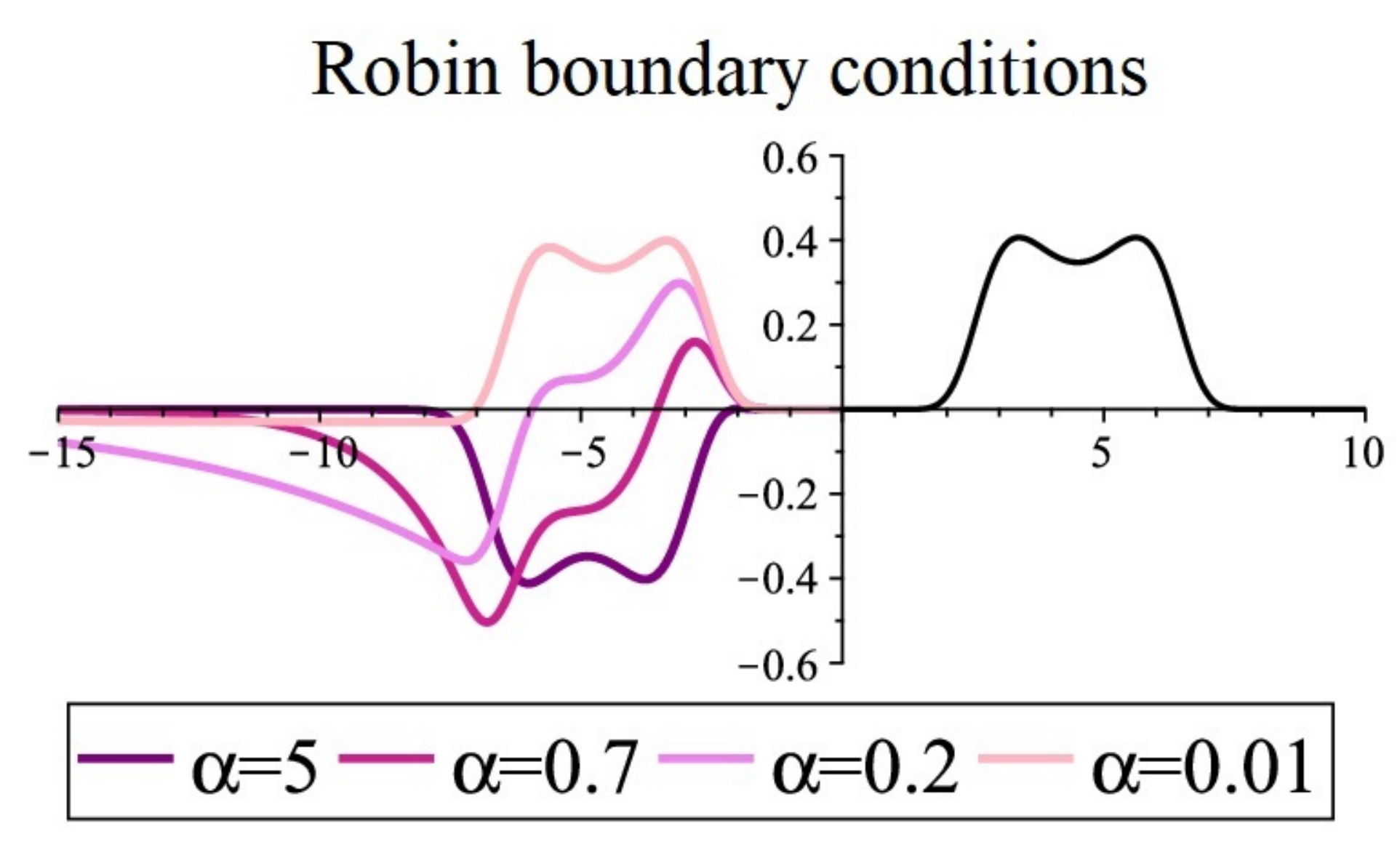}
\includegraphics[scale=0.24]{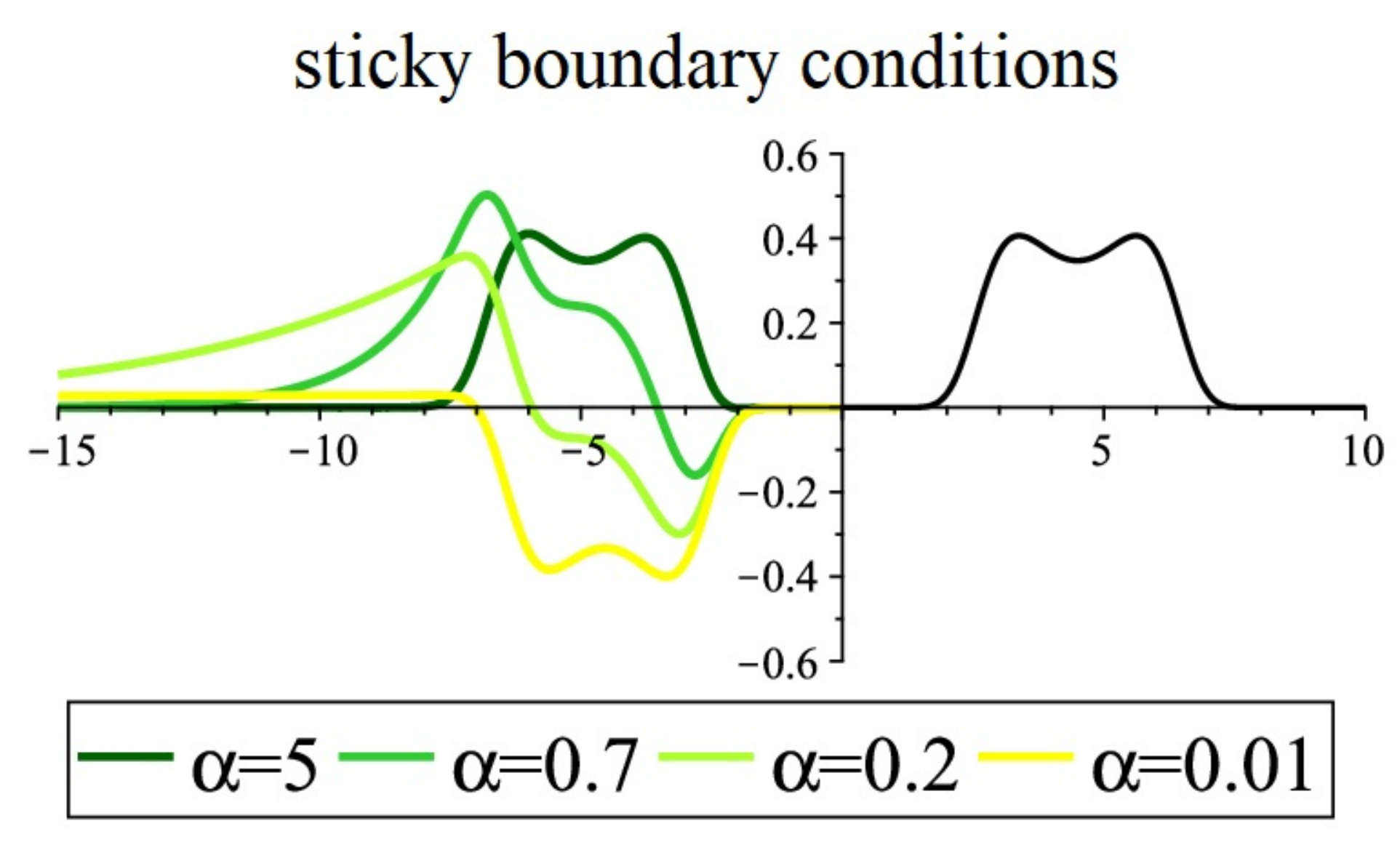}
\caption{Robin and Feller extensions of exemplary functions}
\label{rys1}
\end{figure}
\end{center}
\subsection{Complementary boundary conditions}
The formula 
\[ \cer = \cev \oplus \cod \]
says that $\cer$ can be decomposed into two subspaces that are not only invariant but also \emph{complementary}, and this leads to the conclusion that the related Neumann and Dirichlet boundary conditions are in a sense complementary as well.
In fact, these conditions are nearly `perpendicular', because projections on $\cod$ and $\cev$, mapping a function to its odd and even parts, respectively, are inherited from the space of square integrable functions where they are orthogonal projections.  

In this context, a natural question arrises whether there are any other decompositions of $\cer$ into complementary invariant subspaces, preferably subspaces related to boundary conditions in the sense described above. In other words, are there any other complementary boundary conditions, besides those of Neumann and Dirichlet? The answer given in \cite{bobrat}, is in affirmative: for $\alpha>0$,  the Robin boundary condition \eqref{intro:2} is complementary to the boundary condition (see Figure \ref{rys1})
\begin{equation} f'' (0) = \alpha f'(0),\label{intro:4} \end{equation}
related to the \emph{slowly reflecting boundary} \cite{revuz}*{p. 421} (known also as \emph{sticky boundary} \cite{liggett}*{p. 127}), a particular case of Feller boundary conditions. To repeat, this means that (a) the related subspaces, say, $\cerr$ and $\cef$ (`R' for `Robin', `F' for Feller), are 
 invariant under the basic cosine family (and thus, by the Weierstrass formula --- see e.g. \cite{abhn}*{p. 219} --- under the heat semigroup as well), (b) $\cer$ is decomposed into these subspaces as follows 
\begin{equation}\cer = \cerr \oplus \cef , \label{intro:5} \end{equation} 
and (c) the corresponding projections $P_\alpha\colon\cer \to \cerr$ and $Q_\alpha=I-P_\alpha\colon \cer \to \cef$  have a lot in common with orthogonal projections in Hilbert spaces of square integrable functions. Moreover, $P_\alpha,\alpha\ge 0$ is a continuous family, leading from the projection on the subspace of even functions to the projection on the subspace of the odd functions, whereas  $Q_\alpha,\al\ge 0$ leads in the other direction, and this via a completely different route 
(see Figure \ref{rys2}).
\begin{center}
\begin{figure} 
\includegraphics[scale=0.27]{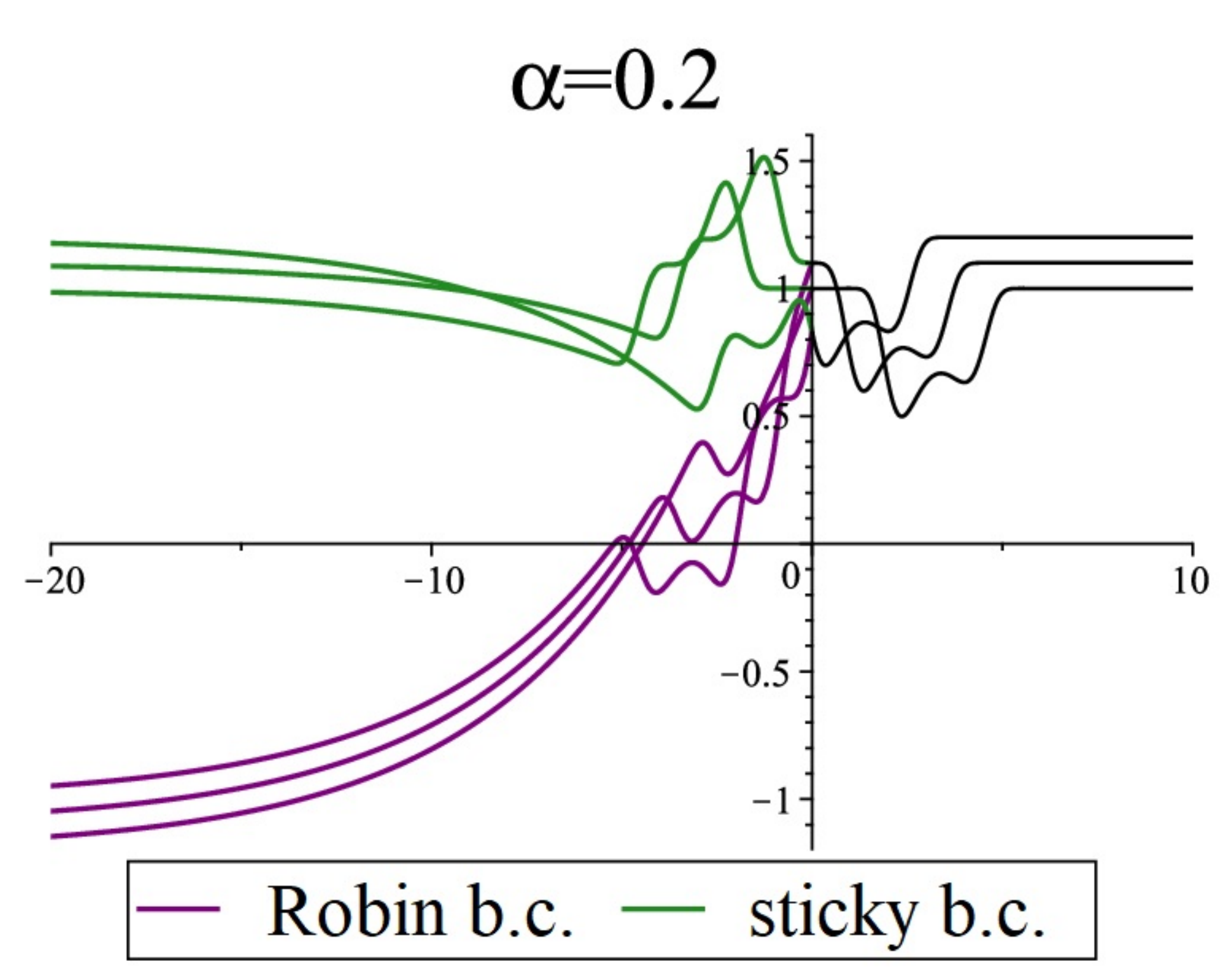}
\includegraphics[scale=0.27]{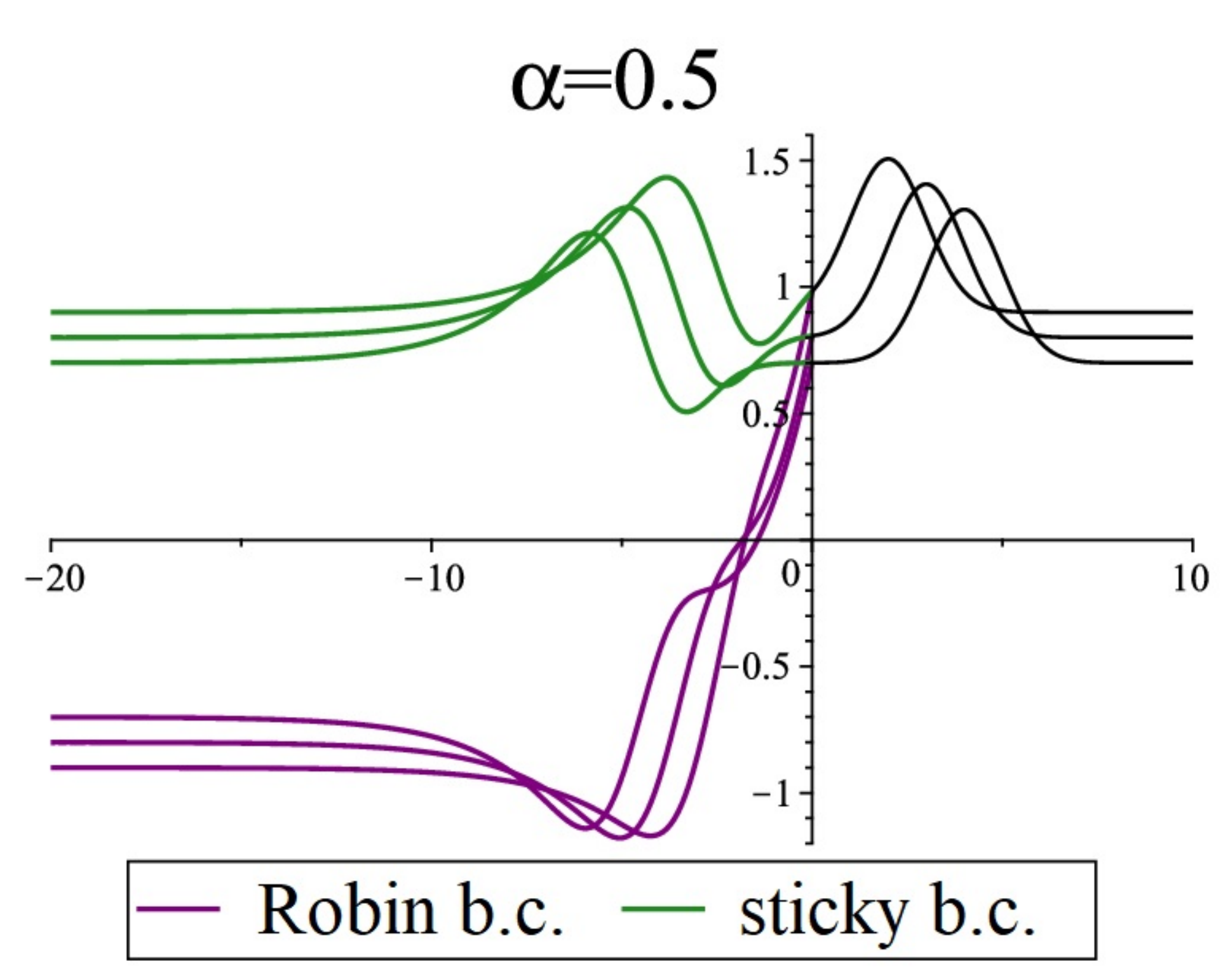}
\caption{Dependence of Robin and Feller extensions on parameter $\alpha$}
\label{rys2}
\end{figure}
\end{center}
\subsection{Invariant subspaces and transmission conditions}\label{isat}
The theory is not restricted to boundary conditions: also a number of \emph{transmission} conditions is related to invariant subspaces --- see e.g. \cite{tombatty,emergence}. For instance, as proved in \cite{tombatty}, the method of images can be used in the case of transmission conditions describing so-called \emph{snapping out Brownian motion} -- see \cites{andrews,fireman,lejayn,tanner,powles} and \cite{knigaz}*{Chapters 4 and 11}.

The snapping out Brownian motion is a diffusion on two half-lines,  $(-\infty,0)$ and $(0,\infty)$, separated by a semi-permeable membrane located at~$0$. Therefore, it can be described by a Feller semigroup of operators in the space \[\cerf \] of continuous functions on 
\[ \R_\sharp\coloneqq  [-\infty,0-] \cup [0+,\infty] \]
where $0-$ and $0+$ are two points, representing 
positions to the immediate left and to the immediate right of the membrane; alternatively, members of $\cerf$ 
can be seen as continuous functions on $(-\infty,0)\cup(0,\infty)$ that have finite limits at $\pm \infty$ and one-sided finite limits at $0$. The membrane, in turn, is characterized by two non-negative parameters, say, $\alpha$ and $\beta$, describing its permeability for a particle diffusing from the left to the right and from the right to the left, respectively. More precisely, given such  $\al$ and $\be$, we define the generator of the snapping out Brownian motion as follows: it is the operator $A_{\al,\be}$ in $\x$  given by 
 \[ A_{\al,\be} f = f'', \]
on the domain composed of twice continuously differentiable $f \in \x $ such that $f', f''\in \x$ and 
\begin{align}\label{intro:bc}\begin{split}
f'(0-)&=\al(f(0+)-f(0-)), \\
f'(0+)&=\be(f(0+)-f(0-)). 
\end{split}\end{align}
See \cite{bobmor,lejayn} or \cite{knigaz}*{Chapters 4 and 11} for a more detailed description of the stochastic mechanism of filtering through the membrane, as governed by \eqref{intro:bc}, in terms of the celebrated L\'evy local time for Brownian motion.

As proved in \cite{tombatty} the operator $A_{\al,\be}$ generates not only a Feller semigroup but also a strongly continuous cosine family $\{\cosab(t),\, t \in \R\} $  of operators  in $\x$. This cosine family can in fact be given quite explicitly:  
\begin{equation} \cosab (t)f(x)  = \begin{cases} {C}(t) \wfl(x),  & x<0, \\
 {C}(t) \wfr(x),  & x>0, \\
 \end{cases}  \qquad  f\in \x, t \in \R,\label{intro:6} \end{equation}
where $C(t)$s are defined in \eqref{intro:0}, and $\wfl $ and $\wfr$ are certain extensions of the left and right parts of $f\in \x$ (see Figure \ref{rys3}). This formula for $\{\cosab(t),\, t \in \R\} $ is a proper counterpart of \eqref{intro:1} in the case of transmission conditions; in particular, it actually involves an extension operator and a restriction operator (see equation \eqref{ab:6}, further down).    
\begin{figure}
 \begin{tikzpicture}[scale=0.9]
 \draw [->] (-5,0) -- (5,0);
\draw [->] (0,-1.5)-- (0,2);
\draw[blue,semithick,domain=0:5, samples=600] plot (\x, {0.3+exp(-\x)*cos(deg(3*\x))});
\draw[dashed,thick,blue, domain=-5:0, samples=600] plot (\x, {0.3-3*exp(\x)+ 4*exp(2*\x)}); 
\draw [violet, semithick] plot [smooth] coordinates {(-5,-0.75) (-4,0.-0.73)  (-3,-0.25) (-2,0.75) (-0.7,0.1) (0,0.15)};
\draw [violet,thick, dashed]  plot [smooth] coordinates {(0,0.15) (1,1)  (2,0.8)  (3,1.5) (4,1)(5,1)};
\node [above] at (3,1.5) {$ \wfl $};
\node [above] at (-2,0.8) {$f$};
\node [below] at (-1,-0.3) {$ \wfr$};
\end{tikzpicture}
\caption{Two extensions of a single function $f \in \x$ (solid lines): extension $\wfl$ (violet) of its left part, and extension $\wfr$ (blue) of its right part.}\label{rys3} 
\end{figure}
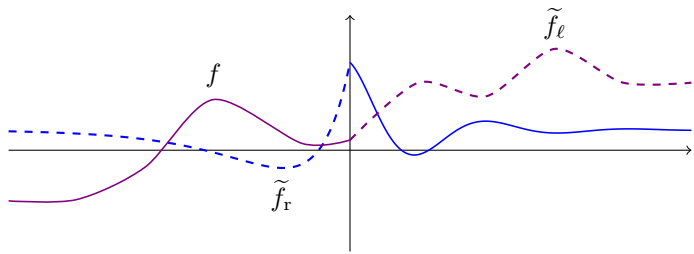

\subsection{Complementary transmission conditions: our first main result}\label{ctc}
Our main goal in this paper is to find an analogue of decomposition \eqref{intro:5} for the space 
\[\ce\coloneqq (\cer)^2\]
and  \emph{the basic Cartesian product cosine family} \[ \{\ced(t),\, t\in \R\} \] (`D' for `Descartes') defined in this space
by the formula 
\begin{equation}\label{intro:7} \ced (t) (f_1,f_2) = (C(t)f_1,C(t)f_2), \qquad t\in \R, f_1,f_2 \in \cer ,\end{equation}
where, to recall, $C(t)$s are introduced in \eqref{intro:0}. 
More precisely, we search for two subspaces of $\ce$ that are 
\begin{itemize} 
\item complementary to each other, and
\item related to transmission conditions;
\end{itemize}
the second requirement says in particular that each of these subspaces is invariant under $ \{\ced(t), \, t\in \R\}$. 

We discover that for this purpose transmission conditions \eqref{intro:bc} can be used and that they form a complementary pair with 
\begin{align} \label{intro:bc2} \begin{split}
f(0-)&=-f(0+), \\
f''(0+)&=\al f'(0-)+\be f'(0+), 
\end{split} \end{align}
as long as $\apb>0$. 
We prove in other words that 
\begin{enumerate} 
\item for any $f\in \mathbb X$, both extensions $\wfl$ and $\wfr$ of \eqref{intro:6} belong to $\cer$; the subspace $\y$ of $\ce$ composed of pairs of such extensions is invariant under $ \{\ced(t), \, t\in \R\}$ --- see Section \ref{exten}, 
\item there is a natural projection $\p$ on $\y$ --- see Section \ref{s:fa}, and  
\item the operator $\q \coloneqq I - \p$, where $I$ is the identity operator in $\ce$, projects on the invariant subspace $\z$ corresponding to the boundary condition \eqref{intro:bc2} --- see Section \ref{comp}.
\end{enumerate}
In summary:
\[ \ce = \y \oplus \z.\]

Concerning point (a) above, we remark that we provide an explicit formula for $\wfl$ and $\wfr$, and it is this formula that allows checking that $\wfr$ and $\wfl$ are members of $\cer$. In \cite{tombatty}, $\wfr$ and $\wfl$ are given merely implicitly and are proved to belong to a slightly larger space than $\cer$. Also, concerning (c), we note that this point requires proving in particular that the Laplace operator in $\x$ with domain characterized by transmission conditions \eqref{intro:bc2} is a cosine family generator and that the cosine family related to this operator is given by an abstract Kelvin formula analogous to \eqref{intro:6} --- this is done in Section \ref{gener}.

\subsection{Convergence to skew Brownian motion}
Sections \ref{aosb}--\ref{limit} are devoted to an approximation of the celebrated skew Brownian motion. As described in \cite{lejayskew} `The skew Brownian Motion appeared in the '70 in \cite{ito,walsh} as a natural generalization of the Brownian motion: it is a process that behaves like a Brownian motion except that the sign of each excursion is chosen using an independent Bernoulli random variable'. The paper by A.~Lejay cited above discusses a number of constructions of the skew Brownian motion that appeared in the literature since that time,  and various contexts in which this process and its generalizations are studied; see also \cite{yor97}*{p. 107}  and \cite{manyor}*{pp. 115--117}. It is well-known, for example, that the process can be obtained in the limit procedure of Friedlin and Wentzell's averaging principle \cite{fwbook} --- see \cite{fw}*{Thm. 5.1}, comp. \cite{emergence}*{Eq. (3.1)}. 

Much more recently, in \cite{abtk}, a link has been provided between skew Brownian motion and kinetic models of motion of a phonon involving an interface, of the type studied in
\cite{tomekkinetic,tomekkinetic3,tomekkinetic2,tomekkinetic1}, and the telegraph
process with elastic boundary at the origin \cite{wlosi,wlosi1}.

In Sections \ref{aosb}--\ref{limit}, we come back to the idea of \cite{tombatty} and \cite{knigaz}*{Chapter 11} that the skew Brownian motion can be obtained as a limit of snapping out Brownian motions. To explain this, let us replace $\alpha$ and $\beta$ in the transmission conditions \eqref{intro:bc} by $n\alpha$ and $n\beta$, respectively, and let $n \to \infty$. Heuristically, it is then clear that the limit transmission conditions should read \[ f(0+)=f(0-) \mquad{ and } \beta f'(0-)=\alpha f'(0+).\]
The first of these relations tells us that we should work with $f\in \x$ that are continuous at $x=0$, and the other is precisely the condition that characterizes the skew Brownian motion. Hence, we anticipate a theorem saying that a skew Brownian motion is a limit of snapping out Brownian motions, provided that permeability coefficients $\alpha$ and $\beta$ of the semi-permeable membrane converge to infinity whereas their ratio remains constant.

As we know from the references cited above, these intuitions can be transformed into a formal theorem saying that 
for any $s>0$, 
\begin{align}\label{intro:cosik} \gran \sup_{t\in [0,s]} \|\e^{t \anab}f - \e^{t\askew}f\| = 0, \qquad f \in \cer \end{align}
where  $\cer$ is seen as the subspace of $\x$ composed of functions $f$ such that $f(0-)=f(0+)$, $\{\e^{t\aab},\, t\ge 0 \}$ is the Feller semigroup describing snapping-out Brownian motion, and $ \{\e^{t\askew}, \,t\ge 0\}$ is that describing the skew Brownian motion. 

This formula deserves a closer scrutiny: it says that the skew Brownian motion is a limit of snapping out Brownian motions with permeability coefficients of the semi-permeable membrane increasing to infinity.  
Because of that, the skew Brownian motion should apparently describe the case in which the membrane is completely permeable. Nevertheless, as seen in the definition of $\askew$ and in its probabilistic description provided above, there remains an asymmetry between the way particles filter through zero when going from the left to the right and in the other direction.  Hence, the skew Brownian motion gains a plausible description as a process with a trace of semi-permeable membrane at $0$.

In Sections \ref{aosb}--\ref{limit} we exhibit a number of phenomena accompanying convergence \eqref{intro:cosik}:
\begin{itemize} 
\item First of all, on $\cer$ (but not outside of this space) not only the semigroups converge; so do also the related cosine families. Moreover, the limit is in fact uniform with respect to $t\in \R$ --- see Theorem \ref{thm:skew} (b) and Remark \ref{rem:one}.
\item For $f\not \in \cer $ the limit $\gran \e^{t\anab}f $ still exists for all $t>0$, but is merely uniform with respect to $t$ in compact subintervals of $(0,\infty)$ --- see Theorem \ref{thm:skew} (c).
\item As mentioned above, there are also semigroups and cosine families related to complementary transmission conditions \eqref{intro:bc2}.  If $\alpha$ and $\beta$ are replaced by $n\alpha$ and $n\beta$, respectively, and $n$ tends to $\infty$, these semigroups and cosine families converge, as well, and the limit is again uniform with respect to $t$ in the entire $\R$. Moreover, the limit semigroup and the limit cosine family turn out to be isomorphic copies of the semigroup and cosine family related to another skew Brownian motion, with the role of coefficients $\alpha$ and $\beta$ reversed  --- see Theorem \ref{thm:weks}. 
\item For the projections $\p$ described in Section \ref{ctc} and formally introduced in Section \ref{s:fa}, there is a strong  limit $\gran P_{n\alpha,n\beta}$. Furthermore, the limit operator turns out to be a projection on the invariant subspace of $\ce$ related to the skew Brownian motion --- see Section \ref{limit}.  
\end{itemize}

\subsection{Commonly used notation}\label{cun} 
\subsubsection{Constants $\alpha$ and $\beta$}
Throughout the paper we assume that $\alpha$ and $\beta$ are fixed non-negative constants. Also, to shorten formulae, we write 
\begin{align*} 
\sq\coloneqq \sqrt{2(\al^2+\be^2)}.
\end{align*}
Since the case of $\alpha=\beta=0$ is not interesting, in what follows we assume that ${\alpha+\beta}>0$, implying that $\gamma>0$ as well. 
\subsubsection{Limits at infinities, and transformations of functions} For $f\in \x$, we write 
\[f(\pm \infty)\coloneqq \lim_{x\to \pm \infty} f(x).\]
Moreover, we define
$ f^\es, f^e $ and $f^o$, also belonging to $\x$, by 
\begin{align} 
f^\es (x)& \coloneqq  f(-x),   \qquad  x \in \R{\setminus \{0\}}, \nonumber  \end{align} 
and \begin{align} f^e  &\coloneqq \pol (f + f^\es), \qquad  f^o  \coloneqq \pol (f - f^\es ), \nonumber 
\end{align}so that \begin{align}
(f^\es)^\es &= f, \quad (f^o)^\es = (f^\es)^o= -f^o \mquad { and } (f^e)^\es =(f^\es)^e= f^e. \label{intro:T}
\end{align}
For $f $ in the linear space $\mathfrak C(\mathbb R)$ of all real continuous functions on $\mathbb R$, these definitions naturally extend to $x=0$ as well. 
In particular, for $e_a, a \in \mathbb R$, defined by \[e_a(x)=\e^{-a x}, \qquad x \in \mathbb R,\] we have $e_a^\es = e_{-a}$.  

We note that 
the basic cosine family, treated as a family of operators in $\mathfrak C(\R)$, commutes with the operations described above, that is, 
\begin{equation}\label{tran} (C(t)f)^o = C(t)f^o, \,   (C(t)f)^e = C(t)f^e, \, (C(t)f)^\es = C(t)f^\es, \,\,  f\in \mathfrak C(\R), t \in \R. \end{equation}

The following lemma will be used to establish 
existence of a number of limits encountered in the paper.  

\begin{lem}\label{lopital} Let $f \in \cer$. Then, as long as $a>0$,
\begin{align*}
\lim_{x\to+\infty }  \int_0^x   \e^{-a (x-y)} f(y ) \ud y & = a^{-1}f(\infty),\nonumber \\
\lim_{x\to\pm\infty } \int_{-\infty}^x\e^{-a (x-y)}   f( y)  \ud y & = a^{-1}f(\pm\infty),\nonumber 
\\  \lim_{x\to\pm\infty } \int_x^\infty  \e^ {a (x-y)}  f (y) \ud y&= a^{-1}f (\pm\infty). 
\end{align*}
\end{lem}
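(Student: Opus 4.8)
The plan is to reduce all three formulae to a single computation by a change of variables that turns each integral into a Laplace-type convolution $\int_0^\infty \e^{-au} f(x\mp u) \ud u$, and then to pass to the limit under the integral sign by dominated convergence. Throughout I use that a function $f\in\cer$ is bounded (say $\|f\|<\infty$) and continuous, with $f(x)\to f(\pm\infty)$ as $x\to\pm\infty$, and that $u\mapsto \e^{-au}$ is integrable on $[0,\infty)$ for $a>0$, with $\int_0^\infty \e^{-au}\ud u = a^{-1}$.

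For the second integral I would substitute $u=x-y$ to obtain
\[ \int_{-\infty}^x \e^{-a(x-y)} f(y)\ud y = \int_0^\infty \e^{-au} f(x-u)\ud u. \]
For each fixed $u\ge 0$ we have $f(x-u)\to f(\pm\infty)$ as $x\to\pm\infty$, and the integrand is dominated by $\|f\|\,\e^{-au}$, which is integrable and independent of $x$. Dominated convergence then yields the limit $f(\pm\infty)\int_0^\infty \e^{-au}\ud u = a^{-1}f(\pm\infty)$. The third integral is treated identically after the substitution $u=y-x$, which gives $\int_x^\infty \e^{a(x-y)}f(y)\ud y = \int_0^\infty \e^{-au}f(x+u)\ud u$; now $f(x+u)\to f(\pm\infty)$ as $x\to\pm\infty$, and the same domination applies.

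The first integral differs only in that its lower limit of integration is finite. Substituting $u=x-y$ I would write
\[ \int_0^x \e^{-a(x-y)}f(y)\ud y = \int_0^\infty \mathbf 1_{[0,x]}(u)\,\e^{-au} f(x-u)\ud u. \]
As $x\to+\infty$ the indicator tends to $1$ pointwise and $f(x-u)\to f(\infty)$, while the integrand remains dominated by $\|f\|\,\e^{-au}$; dominated convergence again gives $a^{-1}f(\infty)$. Alternatively, since this integral equals $\e^{-ax}\int_0^x \e^{ay}f(y)\ud y$, an $\tfrac{\infty}{\infty}$ form, one may invoke L'Hôpital's rule --- whence the name of the lemma --- to obtain the same value $\lim_{x\to\infty}\tfrac{\e^{ax}f(x)}{a\e^{ax}} = a^{-1}f(\infty)$.

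There is essentially no obstacle here: the only points needing a word of care are the verification that $f$ is bounded (immediate from continuity together with the existence of finite limits at $\pm\infty$) and the presence of the truncating indicator in the first integral, both of which are routine. The role of dominated convergence is simply to license the interchange of limit and integral, the dominating function $\|f\|\,\e^{-au}$ being the same in every case.
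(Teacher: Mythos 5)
Your proposal is correct, but it proceeds differently from the paper. The paper's own proof is a one-liner: it invokes l'Hospital's rule (which is why the lemma carries that name) --- writing, in effect, the second integral as the quotient $\int_{-\infty}^x \e^{ay} f(y)\ud y \big/ \e^{ax}$, whose derivative quotient is $f(x)/a$ --- and then observes that the first and third relations are consequences of the second: the first because the difference $\int_{-\infty}^0 \e^{-a(x-y)}f(y)\ud y = \e^{-ax}\int_{-\infty}^0 \e^{ay}f(y)\ud y$ tends to $0$ as $x\to+\infty$, and the third because the substitution $y\mapsto -y$ turns it into the second relation applied to $f^\es$ at $-x$. You instead prove all three limits directly, by the substitution $u = x-y$ (or $u=y-x$) followed by dominated convergence against the fixed dominating function $\|f\|\,\e^{-au}$; you mention l'Hospital only as an aside for the first integral. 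Both arguments are sound. Yours buys uniformity of treatment and complete self-containedness --- the same two-line mechanism (substitute, dominate, pass to the limit) handles every case, with no need to check the hypotheses of the $\cdot/\infty$ form of l'Hospital's rule. The paper's version buys brevity and economy: one application of a classical rule plus two elementary reductions, at the cost of leaving those reductions to the reader. One small remark: your parenthetical justification that $f$ is bounded (continuity plus finite limits at $\pm\infty$) is exactly the point that makes the domination legitimate, so it is good that you flagged it rather than treating it as automatic.
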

\begin{proof} This follows by l'Hospital's Rule. In fact, the first and the third relations here are consequences of the second.\end{proof}

\subsubsection{Laplace transform} By a slight abuse of notation, for $f\in \mathfrak C(\R)$, 
we write 
\[ \wh f (\lam ) \coloneqq  \int_0^\infty \e^{-\lam x} f(x)\ud x, \]
even though, in fact, the right-hand side here is not the Laplace transform of $f$ but of its restriction to the right half-axis. The integral featured above makes sense as long as, for example, there are $M$ and $\omega \ge0 $ such that $|f(x)|\le M\e^{\omega x}, x\ge 0$; then, $\wh f(\lam)$ is well-defined for $\lam >\omega$. 


\subsubsection{Convolution}
For $f,g\in \mathfrak C(\mathbb R)$, we write 
\[ f *g  (x) = \int_0^x f(x-y) g(y) \ud y , \qquad x \in \R;\]
we stress that, somewhat differently than in customary notation, this formula is valid for all $x\in \R.$ We have then $f*g=g*f$  and 
\begin{equation}\label{intro:T1} - (f *g)^\es = f^\es * g^\es . \end{equation}
Furthermore, $e_a, a \in \mathbb R$, introduced above, satisfy the Hilbert equation
\begin{equation}\label{hilbert} e_a - e_b = (b-a)e_a * e_b.\end{equation}
As a consequence, for 
 \[ \sinh_a (x) \coloneqq \sinh (a x),\quad  \cosh_a (x)\coloneqq \cosh (a x), \qquad x \in \R,\] 
we obtain
\begin{align} e_{\alpha+\beta} *[ (\alpha+\beta) \sinh_\gamma + \gamma \cosh_\gamma]&=\sinh_\gamma \nonumber\\  
 e_{\alpha+\beta}* [ \gamma \sinh_\gamma + (\alpha+\beta) \cosh_\gamma]&=\cosh_\gamma  - e_{\alpha+\beta}. \label{proj:dod}
 \end{align}
 
\subsubsection{The restriction operator}\label{tro}
The following restriction operator, denoted $R$ and mapping  $\ce$ onto the space $\x$ of Section \ref{isat}, will be of key importance in the entire paper. By definition $R$ assigns to a  pair $(f_1,f_2)\in \ce$ the member $f$ of $\x$ given by 
\[ f(x) = f_1(x), \text{ for } x<0  \mquad { and } f(x) = f_2(x), \text{ for } x >0. \]

\section{Invariant subspaces related to transmission conditions \eqref{intro:bc}}\label{exten}

We begin our analysis by connecting transmission conditions \eqref{intro:bc} with invariant subspaces of $\mathcal C$. To this end, in Section \ref{sec:doai}, we first find a family of subspaces of $\cer $ that are invariant under the basic cosine family \eqref{intro:0} (see Lemma \ref{lem:ab1}), and then use them to construct subspaces $\y$ of $\mathcal C$ that are invariant under the basic Cartesian product cosine family \eqref{intro:7}. In Section \ref{sec:rect} (see Thm. \ref{ab:genth} in particular) we specify in what sense the so-defined invariant subspaces are related to boundary conditions \eqref{intro:bc}: the cosine family generated by the operator $A_{\al,\be}$ introduced in Section \ref{isat} is an isomorphic image of the cosine family that is obtained by restricting the basic Cartesian product cosine family to $\y$. 

\subsection{Definition of an invariant space $\y$}\label{sec:doai}
Let $a>0$ be fixed, and let us think of the basic cosine family defined in \eqref{intro:0} as composed of operators acting in $\mathfrak C(\mathbb R)$ (the linear space of real continuous functions on $\mathbb R$).  
We start by noting the following formula, which can be proved by direct calculation, and holds for all $t,x\in \R$ and $\phi \in \mathfrak C(\R)$:
\begin{equation}\label{ab:1} C(t) (e_a * \phi) (x) = (e_a * C(t)\phi)(x) + \e^{-ax} (e_a*\phi)^e (t). \end{equation}
This formula reveals that functions $\phi$ that satisfy  $(e_a*\phi )^e=0$ play a~special role in $\mathfrak C(\R)$. In our first lemma, we characterize such functions in more detail. 

\begin{lem}\label{lem:ab1} For $\phi \in \mathfrak C(\R)$ the following conditions are equivalent. 
\begin{enumerate} 
\item $\phi^o= a e_a *\phi$ on $\R^+\coloneqq [0,\infty)$;
\item $\phi^o= a e_a *\phi$ on $\R$;
\item $e_a * \phi$ is odd, that is, $(e_a*\phi)^e = 0$ on $\R$; 
\item For all $t\in \R$, $ C(t) (e_a * \phi) = e_a * C(t)\phi$ on $\R$. 
\end{enumerate}
 \end{lem}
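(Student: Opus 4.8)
The plan is to make the single function $\psi \coloneqq e_a * \phi$ the pivot of the argument and to exploit the first-order differential equation it satisfies. Since $\phi$ is continuous, $\psi(x) = \int_0^x \e^{-a(x-y)}\phi(y)\,\ud y$ is continuously differentiable and, on differentiating, obeys $\psi' = \phi - a\psi$ with $\psi(0) = 0$. Its even and odd parts $\psi^e,\psi^o$ are then $C^1$ as well, and because differentiation interchanges parities (one checks $(\psi^e)' = (\psi')^o$ and $(\psi^o)' = (\psi')^e$), splitting $\psi' = \phi - a\psi$ into its odd and even components yields the coupled system
\begin{align*}
(\psi^e)' &= \phi^o - a\psi^o,\\
(\psi^o)' &= \phi^e - a\psi^e.
\end{align*}
The first of these identities is the engine of the whole proof.

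I would first record the two cheap equivalences. For (c) $\Leftrightarrow$ (d) I simply invoke \eqref{ab:1}: it says $C(t)(e_a*\phi)(x) = (e_a*C(t)\phi)(x) + \e^{-ax}(e_a*\phi)^e(t)$, and since $\e^{-ax}$ never vanishes, the two sides coincide for all $x$ and $t$ precisely when $(e_a*\phi)^e = \psi^e$ is identically zero, which is (c). The implication (b) $\Rightarrow$ (a) is a trivial restriction from $\R$ to $\R^+$. For (c) $\Rightarrow$ (b), assume $\psi^e \equiv 0$; then $\psi = \psi^o$, and the first identity above collapses to $0 = \phi^o - a\psi^o = \phi^o - a\, e_a*\phi$ on all of $\R$.

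The one substantive step, where I expect the real work to lie, is (a) $\Rightarrow$ (c): I must upgrade an identity known only on $\R^+$ to the global vanishing of $\psi^e$. Setting $g \coloneqq \phi^o - a\psi$, hypothesis (a) says $g \equiv 0$ on $\R^+$, so for $x \geq 0$ its parts satisfy $g^e(x) = \pol g(-x) = -g^o(x)$. Now $g^e = -a\psi^e$, while the first identity of the system rewrites $g^o = \phi^o - a\psi^o$ as $g^o = (\psi^e)'$; hence $g^e = -g^o$ becomes the linear equation $(\psi^e)' = a\psi^e$ on $\R^+$. Since $\psi^e(0) = \psi(0) = 0$, uniqueness forces $\psi^e \equiv 0$ on $\R^+$, and evenness extends this to all of $\R$, giving (c).

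Chaining (a) $\Rightarrow$ (c) $\Rightarrow$ (b) $\Rightarrow$ (a), together with the separate equivalence (c) $\Leftrightarrow$ (d), closes every implication. The only delicate bookkeeping is keeping track of parities under differentiation and under restriction to the half-line; once the coupled system is in hand, each individual step is a one-liner.
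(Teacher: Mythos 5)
Your proof is correct, and the substantive step runs along a genuinely different track than the paper's. The paper's hard implication is (a)$\implies$(b): it reflects the hypothesis, invokes the identity $(f*g)^\es = f^\es * g^\es$ (up to sign) and the Hilbert equation \eqref{hilbert} to show $e_a^\es * \phi^\es = e_a * \phi$ on $\R^+$, i.e.\ it is a convolution-algebra computation; (b)$\implies$(c) is then immediate, and (c)$\implies$(b) is done by expanding the oddness relation as an integral identity and differentiating. You instead make (a)$\implies$(c) the hard step and resolve it by an ODE argument: writing $\psi = e_a*\phi$, you observe that $\psi' = \phi - a\psi$, split by parity (using that differentiation swaps even and odd parts) to get $(\psi^e)' = \phi^o - a\psi^o$, and then show hypothesis (a) forces $(\psi^e)' = a\psi^e$ with $\psi^e(0)=0$ on $\R^+$, so uniqueness for the linear ODE kills $\psi^e$. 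Your (c)$\implies$(b) is essentially the paper's differentiation argument repackaged through the same parity-split system, and (c)$\iff$(d) is handled identically in both proofs via \eqref{ab:1}. What your route buys is elementarity and self-containedness: no Hilbert equation and no reflection identity for convolutions are needed, only the fundamental theorem of calculus and uniqueness for $u'=au$, and the argument cleanly exposes why the half-line hypothesis globalizes (an even function solving a homogeneous first-order ODE with zero initial data must vanish). What the paper's route buys is that the convolution-algebra toolkit (in particular \eqref{hilbert} and \eqref{intro:T1}) is reused nearly verbatim in the companion Lemma \ref{lem:gener1}, so the two lemmas there share one template; if you wanted your method to serve the same purpose, you would need to redo the parity bookkeeping for the inhomogeneous relation $\phi^e = ae_a*\phi + \phi(0)e_a$, which is doable but no longer a one-liner.
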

\begin{proof} We recall that two members, say, $\phi_1$ and $\phi_2$, of $\mathfrak C(\R)$ are equal iff simultaneously  $\phi_1=\phi_2$ on $\R^+$ and $\phi_1^\es = \phi_2^\es$ on $\R^+$.  

Suppose now that (a) is true. To see that (b) holds also, we need to check, by \eqref{intro:T1} and the remark made above, that  $\phi^o= a e_a^\es *\phi^\es$ on $\R^+$, that is, by (a), that $e_a^\es * \phi^\es = e_a * \phi$ on $\R^+.$ 
On the other hand, using assumption (a) again, we see that 
\[e_a^\es *\phi^\es = e_{-a} *(\phi- 2ae_a*\phi) = (e_{-a} - 2a e_{-a}* e_a) * \phi, \]
and the last expression equals $e_a*\phi$ by the Hilbert equation \eqref{hilbert}. This completes the proof of 
(a)$\implies$(b); the converse is obvious. 

If (b) holds, $e_a * \phi$ must be odd because so is $\phi^o$; this shows (b)$\implies$(c). Conversely, (c) means that $-e_a * \phi =(e_a*\phi)^\es $ and this, by \eqref{intro:T1}, can be expanded as 
\( \int_0^x \e^{-a(x-y)}\phi(y) \ud y = \int_0^x \e^{a(x-y)}\phi(-y)\ud y, x \in \R. \) Differentiating this relation with respect to  $x$ yields (b). Finally, (c) is equivalent to (d) by \eqref{ab:1}.
 \end{proof}

Our lemma has an immediate bearing on the following subspace of $\ce$: 
\begin{align*} \mathsf C_{a,b} & \coloneqq \{(\phi_1,\phi_2) \in \ce: \phi_1^o = ae_a*\phi_1; \phi_2^o=b e_a*\phi_1\},\end{align*}
where $b>0$ is another constant. Namely, we have the following corollary. 

\begin{cor}The subspace $\mathsf C_{a,b}$ is invariant under the basic Cartesian product cosine family. \label{cor:ab1} \end{cor}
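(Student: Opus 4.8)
The plan is to deduce the corollary directly from Lemma~\ref{lem:ab1} together with the product structure of both the space $\ce$ and the cosine family $\{\ced(t),\,t\in\R\}$. By definition, a pair $(\phi_1,\phi_2)\in\ce$ lies in $\mathsf C_{a,b}$ precisely when two conditions hold: $\phi_1^o=ae_a*\phi_1$ and $\phi_2^o=be_a*\phi_1$. The first of these is exactly condition~(a) of Lemma~\ref{lem:ab1} applied to $\phi_1$ (with constant $a$), and since the equivalent condition~(d) reads $C(t)(e_a*\phi_1)=e_a*C(t)\phi_1$ for all $t\in\R$, membership in $\mathsf C_{a,b}$ is characterized by intertwining relations that interact cleanly with the cosine family. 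So the first step is to fix $t\in\R$, take $(\phi_1,\phi_2)\in\mathsf C_{a,b}$, and show that $\ced(t)(\phi_1,\phi_2)=(C(t)\phi_1,C(t)\phi_2)$ again satisfies the two defining relations.

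For the first relation, I would verify that $(C(t)\phi_1)^o=ae_a*C(t)\phi_1$. Using the commutation property~\eqref{tran}, namely $(C(t)\phi_1)^o=C(t)\phi_1^o$, and the hypothesis $\phi_1^o=ae_a*\phi_1$, the left-hand side equals $C(t)(ae_a*\phi_1)=a\,C(t)(e_a*\phi_1)$. By the equivalence (a)$\iff$(d) of Lemma~\ref{lem:ab1}, this in turn equals $a\,e_a*C(t)\phi_1$, which is exactly what is required. Thus the first component of the transformed pair again satisfies its condition, so in particular $C(t)\phi_1$ still qualifies as a valid ``first coordinate'' for the space.

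For the second relation, I need $(C(t)\phi_2)^o=be_a*C(t)\phi_1$. Again by~\eqref{tran}, the left side is $C(t)\phi_2^o$, and by the hypothesis $\phi_2^o=be_a*\phi_1$ this becomes $b\,C(t)(e_a*\phi_1)$. Here the key point is that the convolution appearing is $e_a*\phi_1$, governed by the \emph{same} constant $a$ and the \emph{same} function $\phi_1$ for which Lemma~\ref{lem:ab1} is available; so applying (a)$\iff$(d) once more yields $b\,e_a*C(t)\phi_1$, as needed. Hence both defining conditions are preserved, giving $\ced(t)(\phi_1,\phi_2)\in\mathsf C_{a,b}$ for every $t$, which is the asserted invariance.

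I expect no serious obstacle, since the statement is essentially a packaging of Lemma~\ref{lem:ab1}(d) into the Cartesian setting. The one point demanding care is that the second coordinate's condition involves $e_a*\phi_1$ rather than $e_a*\phi_2$, so the relevant invariance for the second relation is still the invariance attached to $\phi_1$; this is precisely why it suffices to know that $\phi_1$ satisfies the equivalent conditions of the lemma, and no separate hypothesis on $\phi_2$ beyond the stated coupling is required. I would also note briefly that $\mathsf C_{a,b}$ is a linear subspace (the defining relations being linear in $(\phi_1,\phi_2)$), so ``invariant subspace'' is justified, though this is immediate and needs no computation.
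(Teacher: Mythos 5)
Your proof is correct and follows essentially the same route as the paper's own argument: both use the commutation relations \eqref{tran} to pull the odd-part operation past $C(t)$, then invoke the equivalence with condition (d) of Lemma~\ref{lem:ab1} for $\phi_1$ to commute $C(t)$ with convolution by $e_a$, handling the two defining relations of $\mathsf C_{a,b}$ in turn. The only cosmetic difference is that the paper passes through condition (b) of the lemma rather than (a), which is immaterial since the lemma makes them equivalent.
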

\begin{proof} Since $\ced (t) (\phi_1, \phi_2)= (C(t)\phi_1,C(t)\phi_2)$, we need to prove first of all that $(C(t)\phi_1)^o= ae_a*(C(t)\phi_1)$ for all $(\phi_1,\phi_2)\in \mathsf C_{a,b}$ and $t\in \R$. But we know that $\phi^o_1=ae_a*\phi_1$, and this tells us that  $\phi_1$ satisfies condition (b) in Lemma \ref{lem:ab1}. Hence, $\phi_1$  satisfies all the other conditions listed in Lemma \eqref{lem:ab1}, and in particular we can use (d). By the first relation in \eqref{tran}, this renders
$(C(t)\phi_1)^o=C(t)\phi^o _1= aC(t)(e_a *\phi_1)= ae_a *(C(t)\phi_1)$, thus completing the first part of the proof. 

We are left with showing that $(C(t)\phi_2)^o=b e_a*(C(t)\phi_1)$. To this end, we calculate as above, using the fact that $\phi_2^o=b e_a*\phi_1$: $(C(t)\phi_2)^o=C(t)\phi_2^o= b C(t)(e_a*\phi_1)= b e_a *(C(t)\phi_1), $ as desired. 
\end{proof}

To continue, we note that any real matrix $M=\begin{pmatrix} m_{1,1} & m_{1,2} \\ m_{2,1} & m_{2,2} \end{pmatrix}$ induces a~bounded linear operator in $\ce$, also denoted $M$, by the formula
\begin{equation}\label{ab:2} M \binom{\phi_1}{\phi_2} \coloneqq \begin{pmatrix} m_{1,1}\phi_1^\es  + m_{1,2}\phi_2^\es \\ m_{2,1}\phi_1+ m_{2,2}\phi_2 \end{pmatrix}.\end{equation}
By the third relation in \eqref{tran}, this operator commutes with the basic Cartesian product cosine family. It follows that the image of $\mathsf C_{a,b}$ via $M$ is also an invariant subspace of $\ce$ (under $ \{\ced(t),\, t\in \R\}$). Specializing to (see Section \ref{cun}) $a\coloneqq {\alpha+\beta}, b\coloneqq \pol \gamma^2$ and 
\begin{equation} \label{ab:2.5} M \coloneqq \frac 1{\alpha-\beta} \begin{pmatrix} \beta & -1 \\ \alpha& -1 \end{pmatrix}, \end{equation}
we obtain the invariant subspace 
\begin{equation}\label{ab:3} \y \coloneqq M \left (\mathsf C_{{\alpha+\beta},{\frac {\gamma^2}2}} \right ).\end{equation}

This definition, of course, is meaningless if $\alpha=\beta$; and so in this case we proceed differently. Namely, we introduce 
\[ \mathsf C_{\alpha+\beta}^\sharp \coloneqq \{(\phi_1,\phi_2)\in \ce: \phi_1^o = (\alpha+\beta) e_{\alpha+\beta} * \phi_1, \phi_2^o=0\},\]
and note that this subspace is invariant under $\ced$, because of Lemma \ref{lem:ab1}  and the first relation in \eqref{tran}. Then, we define $\mathcal C_{\alpha,\alpha}  $ as the image of this subspace via the operator $M^\sharp$ of the form \eqref{ab:2}: 
 \begin{equation}\label{ab:4} \mathcal C_{\alpha,\alpha}  \coloneqq M^\sharp (\mathsf C_{\alpha+\beta}^\sharp),\end{equation}
where 
 \begin{equation}\label{ab:4.5} M^\sharp \binom{\phi_1}{\phi_2} =\pol \binom{\phi_2^\es - \phi_1^\es}{\phi_1 + \phi_2}.
 \end{equation}
\begin{lem} \label{lem:ab2}For $(\psi_1,\psi_2)\in \ce$ the following conditions are equivalent. 
\begin{enumerate} 
\item $(\psi_1,\psi_2)$ belongs to $\y$;
\item $\psi_1^o=\alpha  e_{\alpha+\beta} *(\psi_2 - \psi_1^\es)$ and $\psi_2^o=\beta  e_{\alpha+\beta} *(\psi_2 - \psi_1^\es)$
on $\R$;
\item $\psi_1^o=\alpha  e_{\alpha+\beta} *(\psi_2 - \psi_1^\es)$ and $\psi_2^o=\beta  e_{\alpha+\beta} *(\psi_2 - \psi_1^\es)$
on $\R^+$. 
\end{enumerate}
\end{lem}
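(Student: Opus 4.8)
The plan is to dispose of the discrepancy between the domains $\R$ and $\R^+$ in conditions (b) and (c) first, and then to identify membership in $\y$ with (b) by directly inverting the operator that defines $\y$. The single algebraic fact that drives everything is obtained by setting $\phi_1 \coloneqq \psi_2 - \psi_1^\es$: using the reflection rules \eqref{intro:T} I would compute $\phi_1^o = (\psi_2 - \psi_1^\es)^o = \psi_2^o + \psi_1^o$, and likewise record $(\beta\psi_2 - \alpha\psi_1^\es)^o = \alpha\psi_1^o + \beta\psi_2^o$ for later use.

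With this in hand the equivalence (b)$\iff$(c) is short. The implication (b)$\Rightarrow$(c) is trivial. For (c)$\Rightarrow$(b) I would add the two equations of (c) on $\R^+$ to get $\psi_1^o + \psi_2^o = \apb\, e_{\apb}*(\psi_2-\psi_1^\es)$ on $\R^+$, that is, $\phi_1^o = \apb\, e_{\apb}*\phi_1$ on $\R^+$, which is precisely condition (a) of Lemma \ref{lem:ab1} for $\phi_1$ with $a=\apb$. That lemma promotes the identity to all of $\R$ and, through its part (c), shows that $w \coloneqq e_{\apb}*\phi_1$ is odd. Since $\psi_1^o$ and $\psi_2^o$ are odd as well, each equation of (c) is an equality of two odd functions valid on $\R^+$, and odd functions are determined by their values on $\R^+$; hence both equations hold on all of $\R$, which is (b).

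It remains to prove (a)$\iff$(b). Assume first $\alpha\neq\beta$, so that $\y = M(\mathsf C_{\apb,\frac{\gamma^2}2})$ with $M$ the invertible matrix \eqref{ab:2.5} acting through the twisted formula \eqref{ab:2}, and $\frac{\gamma^2}2 = \alpha^2+\beta^2$. Inverting the operator $M$ (carefully, since its first row is composed with $\es$) gives $M^{-1}(\psi_1,\psi_2) = (\phi_1,\phi_2)$ with $\phi_1 = \psi_2 - \psi_1^\es$ and $\phi_2 = \beta\psi_2 - \alpha\psi_1^\es$, as one checks by applying $M$ back. Thus $(\psi_1,\psi_2)\in\y$ iff $(\phi_1,\phi_2)\in\mathsf C_{\apb,\alpha^2+\beta^2}$, i.e. $\phi_1^o = \apb\,w$ and $\phi_2^o = (\alpha^2+\beta^2)\,w$ with $w = e_{\apb}*\phi_1$. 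Inserting the odd-part identities recorded above turns these two membership conditions into the linear system
\[ \begin{pmatrix} 1 & 1 \\ \alpha & \beta \end{pmatrix} \binom{\psi_1^o}{\psi_2^o} = w\binom{\apb}{\alpha^2+\beta^2} \]
in the unknowns $\psi_1^o,\psi_2^o$. Its matrix has determinant $\beta-\alpha\neq 0$, and its unique solution is $\psi_1^o = \alpha w$, $\psi_2^o = \beta w$, which is exactly (b); conversely (b) solves the system, giving (a).

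For the degenerate case $\alpha=\beta$ I would run the same scheme with $\y = M^\sharp(\mathsf C^\sharp_{\apb})$, inverting the operator $M^\sharp$ of \eqref{ab:4.5} to obtain $\phi_1 = \psi_2 - \psi_1^\es$ and $\phi_2 = \psi_2 + \psi_1^\es$. The defining condition $\phi_2^o = 0$ becomes $\psi_1^o = \psi_2^o$, while $\phi_1^o = \apb\, e_{\apb}*\phi_1$ becomes $\psi_1^o + \psi_2^o = 2\alpha\,w$; together these yield $\psi_1^o = \psi_2^o = \alpha w = \beta w$, again exactly (b). The work here is not deep, but the main obstacle is the bookkeeping of the reflection operator $\es$ when inverting the twisted operators $M$ and $M^\sharp$, compounded by the need to treat $\alpha=\beta$ as a genuinely separate case; this is where a sign or an index slip would most easily occur.
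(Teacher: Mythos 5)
Your proposal is correct and takes essentially the same approach as the paper: the (a)$\iff$(b) equivalence via inverting $M$ (and $M^\sharp$ when $\alpha=\beta$) together with the odd-part rules and $2\times 2$ linear algebra, and the (c)$\implies$(b) step via Lemma \ref{lem:ab1} plus the fact that odd functions are determined by their values on $\R^+$. The only differences are cosmetic: you solve the resulting linear system in $(\psi_1^o,\psi_2^o)$ where the paper substitutes the conditions defining $\mathsf C_{\alpha+\beta,\gamma^2/2}$ directly into the expression for $(\psi_1^o,\psi_2^o)$, and you spell out the $\alpha=\beta$ case whose details the paper omits.
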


\begin{proof}  \textbf {(a)$\iff$(b). Case $\alpha\not =\beta$.}
 By definition, $(\psi_1,\psi_2)$ belongs to $\y$ iff there are $\phi_1,\phi_2$ such that 
\[ \phi_1^o= (\alpha+\beta) e_{\alpha+\beta} * \phi_1, \qquad \phi_2^o ={\textstyle \frac{\gamma^2}2} e_{\alpha+\beta} * \phi_1 \]
and 
\begin{equation}\label{added} \binom{\psi_1}{\psi_2} = M \binom{\phi_1}{\phi_2} \quad \text{or, equivalently,} \quad
\binom{\phi_1}{\phi_2} = M^{-1} \binom{\psi_1}{\psi_2} \coloneqq \begin{pmatrix} \psi_2 -  \psi_1^\es\\ \beta \psi_2 - \alpha \psi_1^\es \end{pmatrix}.\end{equation}
Hence, (a) implies
\begin{align*} \binom{\psi_1^o}{\psi_2^o} &= \frac1{\alpha-\beta} \binom{\phi_2^o-\beta \phi_1^o}{\alpha \phi_1^o-\phi_2^o} = \frac1{2(\alpha-\beta)} \binom{[\gamma^2-2\beta(\alpha+\beta)] e_{\alpha+\beta} *\phi_1}{[2\alpha(\alpha+\beta)-\gamma^2] e_{\alpha+\beta} *\phi_1}\\
&= \binom{\alpha e_{\alpha+\beta} *(\psi_2 - \psi_1^\es)}{\beta e_{\alpha+\beta} *(\psi_2 - \psi_1^\es)},
 \end{align*}
which is (b). The converse is proved analogously. 

\textbf {(a)$\iff$(b). Case $\alpha=\beta$.} We proceed similarly: $(\psi_1,\psi_2) $ belongs to $\mathcal C_{\alpha,\alpha}$ if there is $(\phi_1,\phi_2) \in \ce$ such that  
\[ \phi_1^o = (\alpha+\beta) e_{\alpha+\beta} * \phi_1 \mquad{ and }  \phi_2^o=0\]
and 
\[ \binom{\psi_1}{\psi_2} = M^\sharp \binom{\phi_1}{\phi_2} \, \, \text{or, equivalently,}\, \, 
\binom{\phi_1}{\phi_2} = (M^\sharp)^{-1} \binom{\psi_1}{\psi_2} \coloneqq \begin{pmatrix} \psi_2 -  \psi_1^\es\\ \psi_2 + \psi_1^\es \end{pmatrix}.\] 
We omit the details. 

\textbf{(b)$\iff$(c).} Since (b) obviously implies (c), we are left with showing (c)$\implies$(b). 
To this end, we note that $\psi^o_1$ and $\psi_2^o$ are odd and thus it suffices to show that so is $e_{\alpha + \beta} * (\psi_2 - \psi_1^\es)$ if (c) holds. On the other hand, for $\phi \coloneqq \psi_2 - \psi_1^\es,$ (c) implies $\phi^o = (\alpha+\beta) e_{\alpha+\beta}*\phi $ on $\R^+,$ and this, by Lemma \ref{lem:ab1} proves that $e_{\alpha+\beta} * \phi$ is odd. \end{proof}

\subsection{Relation of $\y$ to transmission conditions \eqref{intro:bc}}\label{sec:rect} 
\begin{center}
\begin{figure} 
\includegraphics[scale=0.5]{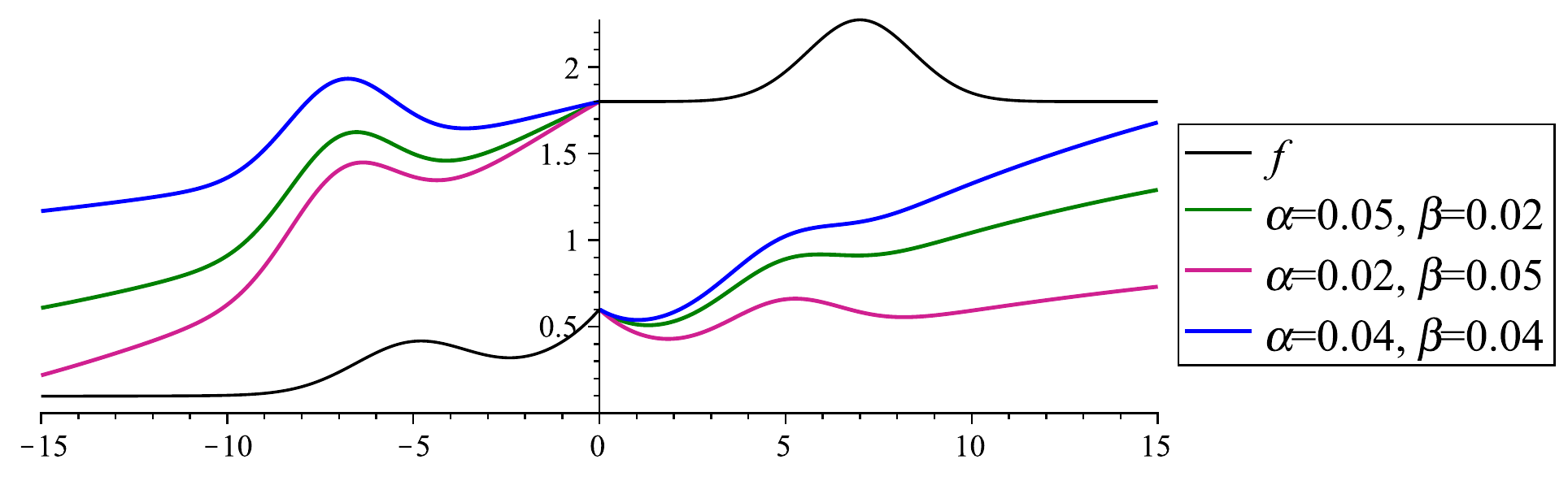}
\caption{Extensions  $E_{\al,\be}f$ of a single function $f \in \x$.
 }

\label{rys_ext}
\end{figure}
\end{center}

\vspace{-0.5cm}
We are now ready to exhibit connection between $\y$ and transmission conditions \eqref{intro:bc}. To this end, 
given $f\in \x$ we define extensions $\wfl,\wfr\in \mathfrak C(\R)$ of its left and right parts (see Figures \ref{rys3} and \ref{rys_ext}) by 
\begin{align}
\wfl (-x) &= f(-x),  \qquad \wfr (x) = f(x),
 \nonumber \\
\wfl (x) &= f(-x)+2\al  e_{\su}* [f-f^\es] (x),  \nonumber 
\\
\wfr (-x)&=f(x)- 2\be  e_\su  * [f-f^\es](x) , \qquad  x> 0, \label{proj:1} \end{align}
and $  \wfr (0)=f(0+), \wfl (0)=f(0-).$ 


\begin{prop}\label{propek1} The transformation $E_{\al,\be}$:
\(  \x \ni f \mapsto (\wfl,\wfr) \in [\mathfrak C(\R)]^2 \)
is an isomorphism of $\x$ and $\y$.\end{prop}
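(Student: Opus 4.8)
The plan is to recognize the restriction operator $R$ of Section~\ref{tro} as a two-sided inverse of $E_{\al,\be}$, so that bijectivity reduces to the two composition identities $R\circ E_{\al,\be}=\Id_{\x}$ and $E_{\al,\be}\circ R=\Id$ on $\y$, while membership of the image in $\y$ is read off from Lemma~\ref{lem:ab2}. Linearity of $E_{\al,\be}$ is immediate from \eqref{proj:1}, since convolution and the reflection $\es$ are linear.

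First I would verify that $E_{\al,\be}$ genuinely maps $\x$ into $\mathcal C=(\cer)^2$, i.e. that $\wfl,\wfr\in\cer$. Continuity away from $0$ is clear from \eqref{proj:1}, as $e_\su*[f-f^\es]$ is continuous on $\R^+$; at the origin this convolution term vanishes, so $\wfl(0\pm)=f(0-)=\wfl(0)$ and $\wfr(0\pm)=f(0+)=\wfr(0)$, giving continuity there. The limits at $\pm\infty$ are finite: $\wfl$ agrees with $f$ on $(-\infty,0)$ so $\wfl(-\infty)=f(-\infty)$, while on $\R^+$ the first relation of Lemma~\ref{lopital}, applied with $a=\su$ to the convolution of $f-f^\es$, shows $\wfl(x)$ converges as $x\to+\infty$; the argument for $\wfr$ is symmetric.

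Next I would show $E_{\al,\be}f\in\y$ via criterion (c) of Lemma~\ref{lem:ab2}. Writing $\psi_1=\wfl,\ \psi_2=\wfr$, the first two lines of \eqref{proj:1} give $\psi_2-\psi_1^\es=f-f^\es$ on $\R^+$. Computing odd parts on $\R^+$ from the remaining two lines yields $\psi_1^o=\al\,e_\su*[f-f^\es]$ and $\psi_2^o=\be\,e_\su*[f-f^\es]$; since $f-f^\es=\psi_2-\psi_1^\es$ there, these are exactly the two identities $\psi_1^o=\al e_\su*(\psi_2-\psi_1^\es)$, $\psi_2^o=\be e_\su*(\psi_2-\psi_1^\es)$ required by (c), so $(\wfl,\wfr)\in\y$.

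Finally, bijectivity and boundedness. The first two lines of \eqref{proj:1} say $\wfl=f$ on $(-\infty,0)$ and $\wfr=f$ on $(0,\infty)$, whence $R\circ E_{\al,\be}=\Id_{\x}$ and $E_{\al,\be}$ is injective. For surjectivity I would take $(\psi_1,\psi_2)\in\y$, set $f\coloneqq R(\psi_1,\psi_2)\in\x$, and check $E_{\al,\be}f=(\psi_1,\psi_2)$: for $x>0$ one has $f-f^\es=\psi_2-\psi_1^\es$, and the all-of-$\R$ form (b) of Lemma~\ref{lem:ab2} gives $2\al\,e_\su*[f-f^\es]=2\psi_1^o=\psi_1-\psi_1^\es$ and $2\be\,e_\su*[f-f^\es]=2\psi_2^o=\psi_2-\psi_2^\es$; substituting these into \eqref{proj:1} collapses $\wfl$ to $\psi_1$ and $\wfr$ to $\psi_2$. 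Thus $E_{\al,\be}\circ R=\Id$ on $\y$. Boundedness of $R$ is trivial (it is norm-decreasing), and boundedness of $E_{\al,\be}$ follows from $\|e_\su*[f-f^\es]\|\le \su^{-1}\cdot 2\|f\|$, so $E_{\al,\be}$ is a topological isomorphism of $\x$ onto $\y$. I expect the main obstacle to be the bookkeeping in the membership step, namely keeping the reflection $\es$ and the two half-axes correctly aligned so that the odd-part computation matches criterion (c); a secondary point of care is confirming finiteness of the limits at $\pm\infty$ so that the image truly lands in $(\cer)^2$ and not merely in $[\mathfrak C(\R)]^2$.
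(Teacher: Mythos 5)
Your proposal is correct and follows essentially the same route as the paper's proof: membership of $(\wfl,\wfr)$ in $(\cer)^2$ via Lemma \ref{lopital}, membership in $\y$ via criterion (c) of Lemma \ref{lem:ab2}, and bijectivity by exhibiting the restriction operator $R$ of Section \ref{tro} as inverse, with the same norm bound. The only difference is cosmetic: where the paper simply notes that $R|_{\y}$ is a right inverse because a pair in $\y$ is determined by $\psi_1$ on the negative half-axis and $\psi_2$ on the positive one, you spell out the verification $E_{\al,\be}\circ R=\Id$ on $\y$ using form (b) of Lemma \ref{lem:ab2}, which is a welcome filling-in of detail rather than a different argument.
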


\begin{proof} Our first claim is that $\wfl$ and $\wfr$ are members of $\cer$. Since, by definition of $\x$, existence of 
 $\lim_{x\to \infty} \wfr (x)$ and $\lim_{x\to -\infty} \wfl (x)$ is assumed, this follows by Lemma \ref{lopital}: in view of \eqref{proj:1}, the lemma renders 
\(\gra \wfl (x) =  \frac{2\alpha }{\alpha+\beta}f(\infty)  +\frac{\beta-\alpha }{\alpha+\beta} f(-\infty)
\) and $\lim_{x\to -\infty} \wfr (x)= \frac{\alpha -\beta}{\alpha+\beta} f(\infty)+ \frac{2\beta}{\alpha+\beta} f(-\infty).$ 

Next, relations \eqref{proj:1} say that on $\R^+$ we have 
\[ \wfl^o = \alpha e_{\alpha+\beta} * (\wfr - \wfl^\es) \mquad {and }  \wfr^o = \beta e_{\alpha+\beta} * (\wfr - \wfl^\es) \]
and this, when combined with the fact established above,  by Lemma \ref{lem:ab2} proves that the pair $(\wfl,\wfr)$ belongs to $\y$. 

The extension operator $E_{\al,\be}\colon\x \to \y$ is clearly linear and bounded with \begin{equation}\label{ab:5a} \|E_{\al,\be}\|\le 1 +\textstyle{\frac {4}{\alpha+\beta}}\max (\alpha,\beta)\le 5.\end{equation} It is also surjective, 
because it has its right inverse --- the operator
$R$ (introduced in Section \ref{tro}) restricted to $\y$. (This is just a restatement of the fact that, by (c) in Lemma \ref{lem:ab2}, a pair $(\psi_1,\psi_2)$ in $\y$ is determined by values of $\psi_1$ on the negative half-axis together with the values of $\psi_2$ on the positive half-axis. For instance, $\psi_1(x) = \psi_1(-x) + 2 \alpha \int_0^x \e^{-(\alpha+\beta)(x-y)}[\psi_2(y)- \psi_1(-y)]\ud y$, $x>0$.) 
Since injectivity  of $E_{\al,\be}$ is beyond doubt, we are done. 
\end{proof}

Invariance of $\y$ (established in Corollary \ref{cor:ab1}) together with the fact that $E_{\al,\be}$ is an isomorphism allows defining the following family of operators in $\x$:
\begin{equation}\label{ab:6} \cosab (t) \coloneqq R \ced (t) E_{\al,\be}, \qquad t \in \R. \end{equation}
It is easy to see that $\{\cosab (t), \,t \in \R\}$ is a strongly continuous cosine family; it is in fact an isomorphic image of the Cartesian product basic cosine family, restricted to the invariant subspace $\y$. Our last proposition in this section says that the generator of this family is a Laplace operator with domain characterized by transmission conditions \eqref{intro:bc}.
\begin{thm} \label{ab:genth} The generator  $A_{\al,\be}$ of the snapping out  Brownian motion introduced in Section \ref{isat} is the generator of the cosine family defined by the abstract Kelvin formula \eqref{ab:6}. \end{thm}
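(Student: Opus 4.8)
The plan is to realize $A_{\al,\be}$ as a similarity transform of the \emph{part} of the generator of $\{\ced(t)\}$ in the invariant subspace $\y$, and then to read off its domain from the smoothness of the extensions $\wfl,\wfr$ at the origin. First I would record that the basic cosine family $\{C(t)\}$ on $\cer$ is generated by the whole-line Laplacian $G_0u=u''$ with $D(G_0)=\{u\in\cer:u',u''\in\cer\}$ (no boundary condition), so that the Cartesian product family $\{\ced(t)\}$ is generated by $\mathbf A(f_1,f_2)=(f_1'',f_2'')$ on $D(\mathbf A)=D(G_0)\times D(G_0)$. Since $\y$ is a closed subspace invariant under $\{\ced(t)\}$ (Corollary \ref{cor:ab1}), the restriction $\{\ced(t)|_\y\}$ is again a strongly continuous cosine family, generated by the part $\mathbf A_\y$ of $\mathbf A$ in $\y$, that is $\mathbf A_\y(g_1,g_2)=(g_1'',g_2'')$ on $D(\mathbf A_\y)=\{(g_1,g_2)\in\y\cap D(\mathbf A):(g_1'',g_2'')\in\y\}$ (see e.g. \cite{abhn}). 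Because $E_{\al,\be}\colon\x\to\y$ is an isomorphism with inverse $R|_\y$ (Proposition \ref{propek1}), the family $\{\cosab(t)\}$ of \eqref{ab:6} is an isomorphic image of $\{\ced(t)|_\y\}$, so its generator is $G=R\,\mathbf A_\y\,E_{\al,\be}$ with $D(G)=\{f\in\x:E_{\al,\be}f\in D(\mathbf A_\y)\}$. It then remains only to identify $G$ with $A_{\al,\be}$.

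The action is immediate: since $\wfl=f$ on $(-\infty,0)$ and $\wfr=f$ on $(0,\infty)$ by \eqref{proj:1}, one gets $Gf=R(\wfl'',\wfr'')=f''$. For the domain, membership $E_{\al,\be}f=(\wfl,\wfr)\in D(\mathbf A_\y)$ amounts to three requirements: that $(\wfl,\wfr)\in\y$, which is automatic; that $\wfl,\wfr$ be twice continuously differentiable on all of $\R$ with second derivatives in $\cer$; and that $(\wfl'',\wfr'')\in\y$. Away from the origin these extensions are as smooth as $f$ and the convolution kernels allow, so the whole matter reduces to the behaviour at $0$.

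Here comes the main computation. Differentiating \eqref{proj:1} and using $\tfrac{\rd}{\rd x}(e_a*g)=g-a\,(e_a*g)$ with $a=\su$ and $g=f-f^\es$, I would check that the one-sided first derivatives of $\wfl$ agree at $0$ precisely when $f'(0-)=\al(f(0+)-f(0-))$, and those of $\wfr$ agree precisely when $f'(0+)=\be(f(0+)-f(0-))$; in other words, $C^1$-smoothness at $0$ of the pair $(\wfl,\wfr)$ is equivalent to the transmission conditions \eqref{intro:bc}. Granting both, a second differentiation, combined with the integration-by-parts identity $\tfrac{\rd^2}{\rd x^2}(e_a*g)=e_a*g''+e^{-ax}\,[g'(0+)-a\,g(0+)]$, whose boundary term vanishes because $f'(0+)+f'(0-)=\su\,(f(0+)-f(0-))$ is exactly the sum of the two conditions, yields the clean identity $(\wfl'',\wfr'')=E_{\al,\be}(f'')$. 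Applying Proposition \ref{propek1} to $f''\in\x$ then shows at once that this pair lies in $\y\subset(\cer)^2$, which settles simultaneously the $\cer$-membership of the second derivatives and the requirement $(\wfl'',\wfr'')\in\y$. Hence every $f\in D(A_{\al,\be})$ satisfies $E_{\al,\be}f\in D(\mathbf A_\y)$ and $Gf=f''=A_{\al,\be}f$.

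For the converse, if $f\in D(G)$ then $\wfl,\wfr$ are twice continuously differentiable on $\R$, hence in particular $C^1$ at $0$, so by the equivalences just established $f$ obeys both conditions in \eqref{intro:bc}; moreover $f$ coincides with $\wfl$ and $\wfr$ on the two half-lines, so $f',f''\in\x$ and $f\in D(A_{\al,\be})$. This gives $G=A_{\al,\be}$. I expect the second-derivative computation at $0$—specifically, showing that the boundary term produced when integrating $e_a*g''$ by parts is annihilated by \eqref{intro:bc}, so that $(\wfl'',\wfr'')$ is genuinely $E_{\al,\be}(f'')$ and thus returns to $\y$—to be the delicate step; everything else is bookkeeping around Proposition \ref{propek1} and the standard part-of-the-generator principle.
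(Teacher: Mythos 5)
Your proposal is correct and follows essentially the same route as the paper: both realize $\{\cosab(t),\,t\in\R\}$ as the isomorphic image (via $E_{\al,\be}$, with inverse $R|_{\y}$) of the restriction of the product cosine family to the invariant subspace $\y$, and both reduce the identification of $D(G)$ with $D(\aab)$ to showing that twice continuous differentiability of the extensions $(\wfl,\wfr)$ at the origin is equivalent to the transmission conditions \eqref{intro:bc}. The only difference is one of execution: where the paper runs difference quotients and l'Hospital's rule, you differentiate \eqref{proj:1} directly and package the second-derivative step as the intertwining identity $(\wfl'',\wfr'')=E_{\al,\be}(f'')$, a slightly cleaner bookkeeping of the same computation.
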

\begin{proof}\ \ 

\textbf{Step 1.} 
Let $G$ be the generator of $\{\cosab, \,t \in \R\}$; we claim that $D(A_{\al,\be})=D(G)$. To this end, we note that, by definition, an $f\in \x$ belongs do $\dom{G}$ iff the pair $(\wfl,\wfr)$ belongs do the domain of the generator of $\{\ced (t)_{|\y}, t \in \R\},$ that is, if $\wfl$ and $\wfr$ are twice continuously differentiable on the entire $\R$ with $\wfl'',\wfr''\in \cer.$ This is the case if $f$ is twice continuously differentiable with $f',f'' \in \x$ and both extensions have derivatives of second order at $0$. Hence, the crux of the matter is to show that under such circumstances these latter derivatives exist iff $f$ satisfies  transmission conditions \eqref{intro:bc}.  

Assume thus that $f\in \x$ is such that $\wfl$ and $\wfr $ are continuously differentiable on the entire $\R$. Using \eqref{proj:1}, we can write out the formula for the difference quotient, to see that the right-hand derivative of $\wfl$ at $0$ is minus the left-hand derivative of $\wfl$ at $0$ plus $2\alpha (f(0+) - f(0-))$. Since the one sided derivatives coincide by assumption, we thus obtain $(\wfl)'(0)= \alpha (f(0+) - f(0-)).$  Assumed continuity of $(\wfl)'$ together with the fact that the derivatives of $\wfl$ and $f$ coincide on $(-\infty,0)$ leads to the conclusion that $f'(0-)=\alpha (f(0+) - f(0-))$. The other condition in \eqref{intro:bc} is checked similarly.   

To prove the converse, assume that $f$ belongs to $D(A_{\al,\be})$. Then, existence of the left-hand limit of $f$ and $f'$ at $0$ together with l'Hospital's Rule shows that the left-hand derivative of $\wfl$ at this point exists and coincides with $f'(0-).$ Moreover, writing out again the formula for the difference quotient and using l'Hospital's Rule, we check that the right-hand derivative of $\wfl$ at zero exists and equals $-f'(0-)+2\alpha (f(0+) - f(0-))$. By assumption \eqref{intro:bc}, it follows that the one-sided derivatives of $\wfl$ coincide and equal $\alpha (f(0+) - f(0-))$. Next, we note that for $x>0$
\begin{align*} (\wfl)'(x) &= -f'(x) - 2\alpha (\alpha+\beta) e_{\alpha+\beta}* [f - f^\es] (x) + 2\alpha [f-f^\es](x)\end{align*}
and argue similarly as above, using \eqref{intro:bc} and l'Hospital's Rule, to see that one-sided second order derivatives of $\wfl$ at $0$ exist and coincide. The case of $\wfr$ is analogous. 

\textbf{Step 2.} Let now $f\in D(G)=D(A_{\al,\be})$. By Step 1, $E_{\al,\be}f=(\wfl,\wfr)$ is then a pair of twice continuously differentiable functions and thus we have 
\[ \lim_{t\to 0}2t^{-2} (\ced (t)E_{\al,\be}f - E_{\al,\be}f) = ((\wfl)'',(\wfr)'').\]
It follows that $Gf = R((\wfl)'',(\wfr)'')= f''=A_{\al,\be}f$, completing the proof. \end{proof}

\section{Natural projections on the space $\y$ and its complement}\label{proj} 
In Section \ref{exten} we introduced $\y\subset \mathcal C$ as a subspace that is invariant under the basic Cartesian product family, and then proved that it can be seen as the space of extensions of members of $\x$ (the extensions given by \eqref{proj:1}), thus establishing a link between $\y$ and the transmission conditions  \eqref{intro:bc}. In this section we find a natural projection on the space $\y$.

 First, in Section \ref{dotp}, we use a somewhat heuristic argument  to \emph{derive} a candidate for such a projection by using $L^2$ intuitions. Then, in Sections \ref{s:fa} and \ref{comp} we check that the formula guessed in Section \ref{dotp} indeed defines a projection operator $\p: \ce\to \y$. In the course of this analysis we also characterize the complement of $\y$, that is, the space $\z$ described in Section \ref{comp}. The main result is summarized in Theorem \ref{ttrzy}.

To recall, by Lemma \ref{lem:ab2},  $\y$ is the space of $(\gle,\gre) \in \ce$ satisfying
\begin{align}
\gle^o  =\al \e_\su* (\gre-\gle^\es),   \qquad
\gre^o=\be  \e_\su *(\gre-\gle^\es). \label{proj:1aa}
 \end{align} 

\subsection{Heuristic derivation of projection }\label{dotp}
Led by the conviction that the most natural projections are orthogonal projections in a Hilbert space, 
given $(f_1,f_2) \in \ce$  
we fix $n \in \N$ and search for a pair $(\gle,\gre) \in \y$  that minimizes the functional 
\[ L(\gle,\gre) \coloneqq \int_{-n}^{n} [\gle(x)-\fle(x)]^2+[\gre(x)-\fre(x)]^2 \ud x ;\] 
the reason why we consider the integral over a finite interval, is that functions $\gle$ and $\gre$  need not belong to $L^2(\R)$. Using \eqref{proj:1aa} we obtain that $L(\gle,\gre)$ is equal to
\begin{align*}
& \int_0^n \bigg ( \Big[\gle^\es(x) - \fle^\es(x)\Big]^2+\Big[ \gle^\es(x) + 2\al  \e_\su * (\gre-\gle^\es)(x)- \fle(x)\Big]^2 
\\&\phantom{=} +\big[\gre(x) - \fre(x)\big]^2+ \Big[ \gre(x) - 2\be  \e_\su * (\gre-\gle^\es)(x) -\fre^\es(x)\Big]^2  \bigg) \mud x .
\end{align*}
In terms of functions $\fh_1$ and  $\fh_2$ defined by
\begin{align*}
{h}_1 & \coloneqq e_\su * g_1^\es , \qquad h_2 \coloneqq  e_\su * g_2, \end{align*}
and related to $g_1^\es$ and $g_2$ also by
\begin{align}
 \fh_1' &= g_1^\es - (\su) \fh_1, \qquad \fh_2' = g_2 - (\su) \fh_2,  \label{der_iden}
 \end{align}
 the quantity $L(\gle,\gre)$ can be written as
\begin{align} 
  &  \int_0^n \bigg (\big[ \fh_1'(x) +(\su) \fh_1(x) - \fle^\es(x)\big]^2 + \big[ \fh_2'(x) +(\su) \fh_2(x) - \fre(x)\big]^2 \nonumber \\ \nonumber 
 &\phantom{==}+\big[ \fh_1'(x) - (\al-\be) \fh_1(x) +2\al \fh_2(x)- \fle(x)\big]^2  \\
 &\phantom{==}+ \big[\fh_2'(x) +2\be \fh_1(x) + (\al-\be) \fh_2(x) - \fre^\es(x)\big]^2 \bigg ) \mud x \label{proj:2} .
\end{align}
Therefore, the minimizing $(\fh_1,\fh_2)$  has to satisfy the Euler--Lagrange equations
\[ \frac {\partial K}{\partial \fh_i} =   \frac {\partial^2 K}{\partial x \partial \fh_i'},  \qquad i=1,2, \] where $K(\fh_i,\fh_i',x)$, $i=1,2$, is the integrand in \eqref{proj:2}. 
If we assume additionally that $\fle$ and $\fre$ are continuously differentiable,
this leads to the following system of ODEs with constant coefficients for $(\fh_1,\fh_2)$:
\begin{align*} 
h_1''+ (\al-\be)\fh_2'  -(\al^2+3\be^2)\fh_1 +(\al-\be)^2 \fh_1&=(\fle^e)' +\al \fle^o-\be(\fle^e+ \fre^\es ) , \\ 
  h_2''- (\al-\be)\fh_1'  +(\al-\be)^2 \fh_1 -(3\al^2+\be^2)\fh_2 &=(\fre^e)'-\be \fre^o -\al(\fre^e+\fle ),
\end{align*}
with initial conditions $  \fh_i(0)=0, \fh_i'(0)=g_i(0),$ $i=1,2$. Solving this system and  using relations \eqref{der_iden}
 we find the formula for $(\gle,\gre)$ that minimizes $L$. Namely, a somewhat long calculation shows that on the interval $[0,n]$,  
\begin{align} 
\gle &= \fle^e + c \big (\textstyle{\frac{2\al}{\sq}}\sinh_\sq - \cosh_\sq  \hspace{-2pt}\big) - \big(\textstyle{\frac{2\al}{\gamma}} k_1-\textstyle{\frac{\gamma}{2}}  k_2\big ) * \sinh_\sq   +( k_1-\al  k_2) *  \cosh_\sq,\nonumber \\    
\gre & = \fre^e  + c \big (\textstyle{\frac{2\be}{\sq}}\sinh_\sq + \cosh_\sq  \hspace{-2pt}\big )-\big(\textstyle{\frac{2\be}{\gamma}} k_1+\textstyle{\frac{\gamma}{2}} k_2\big) *\sinh_\sq  - ( k_1+\be  k_2)*\cosh _\sq,  \label{proj:gr}
 \end{align}
where  $c=c(n)$ is a real constant, and
\begin{align}
 k_1 \coloneqq   \alpha f_1^o + \beta f_2^o, \qquad
 k_2\coloneqq f_1^e - f_2^e . \label{proj:F} \end{align}

To summarize: functions $g_1$ and $g_2$ defined by \eqref{proj:gr} and \eqref{proj:F} form a pair that is a
candidate for a projection of $(f_1,f_2)$ on $\y$. However, so far they are defined merely on $[0,n]$ and, since $c$ depends on $n$, a priori we cannot assume that as $n$ increases, formula \eqref{proj:gr} still  defines the same functions. In fact, our $L^2$-based argument has not determined $c= c(n)$ as yet.  Our next step, therefore, is a leap of faith: we assume that $c$ does not depend on $n$, so that \eqref{proj:gr} is a consistent definition on the entire right half-axis. Furthermore, we note that, by \eqref{proj:1}, for $(g_1,g_2) \in \y$, $(g_1)_{|[0,\infty)}$ is determined by $(g_1)_{|(-\infty,0]}$ and $(g_2)_{|[0,\infty)}$, and a similar remark applies to $(g_2)_{|(-\infty,0]}$. In fact, as in \cite{bobrat}*{Proposition 2.1 (c)}, we conjecture  that \eqref{proj:gr} works on the entire $\R$.  

To complete our search for $g_1$ and $g_2$, we should define $c$. To this end, we recall that for a function $\phi\colon[0,\infty)\to \R$ a finite limit $\gra \e^{\gamma x} \phi (x)$ cannot exist unless 
$\gra \phi (x)=0$ (because, by assumption, $\gamma >0$). It follows that, for $\gra g_2(x) $ to be finite, it is necessary  that $c$ in  \eqref{proj:gr} be given by (see Section \ref{cun} for the notation we use here)
\begin{align}\label{proj:C2} \begin{split}
c&\coloneqq \wh  k_1(\sq)+\textstyle{\frac{\sq}{2}}\wh{ k_2}(\sq).
\end{split} \end{align}
In the next section, we show that formulae \eqref{proj:gr} and \eqref{proj:F} complemented by this necessary condition for existence of $\gra g_2(x) $ define a pair of members of $\cer $, and in fact, the map $(f_1,f_2) \mapsto (g_1,g_2) $ is a projection on $\y$. 
\subsection{Formal analysis}\label{s:fa} 

In this section we study the map 
\[ \ce \ni (f_1,f_2) \overset {\p} \longmapsto (g_1,g_2)\]
where $g_1$ and $g_2$ are given by 
\eqref{proj:gr} (on the entire $\R$) supplemented by \eqref{proj:F}--\eqref{proj:C2}.
Our ultimate aim (achieved in Section \ref{comp}) is showing that $\p$ is a projection on $\y$, but in this section we content ourselves with proving that the range of $\p$ is $\y$.

\begin{prop}\label{raz}
For $(\fle,\fre)\in \ce$, $\p(\fle,\fre)$ belongs to $\y$. 
\end{prop}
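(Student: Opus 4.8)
The plan is to verify the two requirements that, by Lemma~\ref{lem:ab2}, characterise membership in $\y$: first that $g_1$ and $g_2$ are genuine elements of $\cer$, that is, continuous functions with finite limits at $\pm\infty$, and second that the pair satisfies the two convolution relations of Lemma~\ref{lem:ab2}(b). Continuity of $g_1,g_2$ is immediate from \eqref{proj:gr}, and the formula carries no factor $1/(\al-\be)$, so it applies uniformly whether or not $\al=\be$. Thus the argument splits into two essentially independent tasks: (A) the existence of the four one-sided limits at $\pm\infty$, and (B) the verification of the defining relations. It will turn out that (A) is exactly where the particular constant $c$ of \eqref{proj:C2} is needed, whereas (B) holds for an arbitrary constant.

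For (A), I would write $\sinh_\sq=\pol(e_{-\sq}-e_\sq)$ and $\cosh_\sq=\pol(e_{-\sq}+e_\sq)$ and expand every convolution in \eqref{proj:gr} into pieces involving $e_{\pm\sq}$. By Lemma~\ref{lopital} all terms built from the decaying kernel $e_\sq$ converge at $+\infty$, so the only threat there comes from the summands $k*e_{-\sq}$, each producing a growing term proportional to $\e^{\sq x}\wh k(\sq)$; collecting the coefficient of $\e^{\sq x}$ in $g_2$ and equating it to zero is precisely the computation that yields \eqref{proj:C2}, and I would then check that with this same $c$ the coefficient of $\e^{\sq x}$ in $g_1$ vanishes automatically. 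The limits at $-\infty$ are handled symmetrically: the parities $k_1^\es=-k_1$ and $k_2^\es=k_2$, read off from \eqref{proj:F} and \eqref{intro:T}, give $\wh{k_1^\es}(\sq)=-\wh k_1(\sq)$ and $\wh{k_2^\es}(\sq)=\wh k_2(\sq)$, and substituting \eqref{proj:C2} makes the coefficient of $\e^{-\sq x}$ in both $g_1$ and $g_2$ collapse to zero as well. Tracking these growing exponentials, and seeing that the single constant $c$ simultaneously annihilates all four of them, is the main obstacle.

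For (B), I would first record the parities of the convolutions: since $k_1$ is odd and $k_2$ is even, relation \eqref{intro:T1} shows that $k_1*\sinh_\sq$ and $k_2*\cosh_\sq$ are odd while $k_1*\cosh_\sq$ and $k_2*\sinh_\sq$ are even. Extracting the odd parts of \eqref{proj:gr} then gives $g_1^o=\al\,\Psi$ and $g_2^o=\be\,\Psi$ with the \emph{common} function $\Psi\coloneqq \tfrac{2}{\sq}\bigl(c\sinh_\sq-k_1*\sinh_\sq\bigr)-k_2*\cosh_\sq$; hence both relations of Lemma~\ref{lem:ab2}(b) reduce to the single identity $\Psi=e_\su*(g_2-g_1^\es)$. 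Writing $g_2-g_1^\es=(g_2^e-g_1^e)+(g_2^o+g_1^o)$, computing $g_2^e-g_1^e=-k_2+2c\cosh_\sq-\sq\,k_2*\sinh_\sq-2k_1*\cosh_\sq$ and $g_2^o+g_1^o=(\su)\Psi$, this identity becomes
\[
\Psi-(\su)\,e_\su*\Psi=e_\su*\bigl[-k_2+2c\cosh_\sq-\sq\,k_2*\sinh_\sq-2k_1*\cosh_\sq\bigr].
\]
Finally I would evaluate the left-hand side by applying the relations \eqref{proj:dod}, in the forms $\sinh_\sq-(\su)e_\su*\sinh_\sq=\sq\,e_\su*\cosh_\sq$ and $\cosh_\sq-(\su)e_\su*\cosh_\sq=e_\su+\sq\,e_\su*\sinh_\sq$, to each of the three summands of $\Psi$ (convolving the latter two identities with $k_1$ and $k_2$); after this substitution the two sides match term by term, using only commutativity and associativity of $*$. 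Since the $2c\cosh_\sq$ contribution cancels identically, this simultaneously confirms the relation and shows that (B) is independent of the value of $c$, so that it is indeed task (A) alone that forces the choice \eqref{proj:C2}. Combining (A) and (B), condition (b) of Lemma~\ref{lem:ab2} is met, whence $(g_1,g_2)\in\y$.
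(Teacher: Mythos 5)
Your proof is correct and takes essentially the same route as the paper's: its Step 1 is your task (B) — verifying the relations of Lemma \ref{lem:ab2}(b) through the identities \eqref{proj:dod} and noting that this part is independent of $c$ — and its Step 2 is your task (A), where the paper absorbs the constant \eqref{proj:C2} into the integral representations \eqref{intro:eqdef_g1}--\eqref{intro:eqdef_g2} and invokes Lemma \ref{lopital}, which is exactly the cancellation of growing exponentials you track coefficient by coefficient. The remaining differences are purely organizational (the paper reduces (B) to two vanishing bracket expressions rather than your single identity for $\Psi$, and does (B) before (A)), not mathematical.
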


\begin{proof} \ 

{\textbf {Step 1.}}  
 We start with proving that for $(\gle,\gre)$ defined by \eqref{proj:gr}--\eqref{proj:F} condition \eqref{proj:1aa} is satisfied (regardless of the choice of $c$).
Combining the definition of $\p$ and identities \eqref{intro:T}, \eqref{intro:T1} with the facts that $ k_1$ is odd and $ k_2$ is even,
we obtain
 \begin{align}
  g_1^o  =\textstyle{\frac{2\alpha c}{ \gamma}}{\sinh_\gamma}- \textstyle{\frac{2\al}{\sq}} k_1* {\sinh_\gamma} -\al  k_2 * {\cosh_\gamma}, \nonumber \\
   g_2^o  = \textstyle{\frac{2\beta c }{\gamma}}{\sinh_\gamma} -  \textstyle{\frac{2\be}{\sq}} k_1* {\sinh_\gamma} -\be k_2* {\cosh_\gamma}.
 \end{align}
 Thus $\beta   g_1^o =\alpha g_2^o$ and we are left with showing that 
 \[   g_1^o = \alpha {e_{\alpha+\beta}} *( g_2 - {g_1^\es}) .\]
Again, by  \eqref{proj:gr}, \eqref{intro:T} and \eqref{intro:T1}, 
\begin{align*}   g_2 - {g_1^\es}  & =  -k_2 + 2c\big( \textstyle{\frac{\alpha+\beta} {\gamma}} {\sinh_\gamma} +{\cosh_\gamma}\big) - \big(\textstyle{\frac{2(\al+\be)}{\sq}} k_1+\sq  k_2   \big)*\sinh_\gamma    \\
&\phantom{=}- (  2  k_1+(\su)  k_2 )*{\cosh_\gamma}.\end{align*}
Therefore, by the first relation in \eqref{proj:dod}, it suffices to show that 
\begin{align*}
\textstyle{\frac 2 \gamma }{ k_1}* [{\sinh_\gamma} &- {e_{\alpha+\beta}}  *( (\alpha+\beta) {\sinh_\gamma} +\gamma {\cosh_\gamma}) ]
\\& =  { k_2}* [ {e_{\alpha+\beta}}*(\gamma  {\sinh_\gamma} + (\alpha+\beta)  {\cosh_\gamma})-{\cosh_\gamma} +e_{\alpha+\beta}]. \end{align*}
(We stress that this argument works regardless of the choice of constant $c$.) 
Since, by \eqref{proj:dod}, both expressions in brackets vanish, proof of the first step is completed.

{\textbf {Step 2.}} Here, we show that the limits $\lim_{x\to \pm \infty} g_{i}( x)$, $i=1,2$, exist and are finite. 
Starting with $\gra g_1(x)$, we write, by the  definition of $c$ given in \eqref{proj:C2}, 
\begin{align}
g_1(x) & = \fle^e(x) +\pol(\gamma+2\al) \int_{-\infty}^x \big[\textstyle{\frac 1\gamma} k_1(y)-\pol  k_2(y) \big]\e^ {-\sq  (x-y)}   \ud y \nonumber \\
 &\phantom{=} -\pol(\gamma-2\al)  \int_x^{\infty} \big[\textstyle{\frac 1\gamma} k_1(y)+\pol  k_2(y) \big]\e^ {\sq (x-y)}   \ud y , \qquad x\in \R.\label{intro:eqdef_g1}  \end{align}
Since $f_2^e,  k_1$ and $ k_2$ all belong to $\cer$, the existence and finiteness of  $\lim_{x\to \pm \infty}  g_1(x)$  follows by Lemma \ref{lopital}.

Turning to $\lim_{x\to -\infty} g_2(x)$, we observe that 
\begin{align}
g_{2}(x) &= \fre^e(x) +\pol (\gamma+2\be) \int_x^\infty  \big[\textstyle{\frac 1 \gamma} k_1(y)+\pol  k_2(y) \big]\e^{\sq (x-y)}   \ud y \nonumber \\   \label{intro:eqdef_g2}
 &\phantom{=} -\pol (\gamma-2\be ) \int_{-\infty}^x \big[\textstyle{\frac 1 \gamma} k_1(y)-\pol  k_2(y) \big]\e^{-\sq (x-y)}   \ud y , \qquad x\in \R.   
 \end{align}
and, as with the previous limit, we deduce existence of $\lim_{x\to \pm \infty} g_2(x)$ from Lemma \ref{lopital}.
  \end{proof}

It is tempting to use arguments similar to these presented above to show that for $(f_1,f_2)\in \y$, $\p (f_1,f_2) $ coincides with $(f_1,f_2)$ --- this would establish the fact that $\p$ is a projection on $\y$. We will, however, take a different route and obtain this result as a consequence of Proposition \ref{tdwa} presented in the next section (see Theorem \ref{ttrzy}, further down). 

\subsection{A complementary subspace to $\y$}\label{comp}

If $\y$ is complemented and $\p$ is a projection on $\y$, the range of the operator
\begin{equation} \q \coloneqq I_{\ce} -\p.\label{proj:7} \end{equation}
is a complement to $\y$. Our Proposition \ref{tdwa},  presented a bit further down,    
says that the range is contained in the subspace $\z$, composed of $(\gle,\gre) \in \ce$ such that 
\begin{align} 
\gle^e&=-\gre(0)\e_\su- \e_\su*(\be \gre -\al \gle^\es)  , \nonumber \\
\gre^e&=\gre(0)\e_\su+ \e_\su*(\be \gre -\al \gle^\es) . \label{comp:1a}
 \end{align}
In other words,  the range of $\q$ is contained in
\[ \z \coloneqq \{ (g_1,g_2) \in \ce:\eqref{comp:1a} \text{ holds }\}.\]
This result, in turn, when combined with  Proposition \ref{raz}, allows showing that the range of $\q$ indeed forms a complement to $\y$ and that $\q$ and $\p$ are projections on these subspaces (see Theorem \ref{ttrzy}).

Before continuing, for ease of reference, we note that for $(\fle,\fre) \in \ce$,  the pair $(g_1,g_2) \coloneqq \q (\fle,\fre)$ is defined by (see \eqref{proj:gr})
\begin{align} 
\gle &= \fle^o - c \big (\textstyle{\frac{2\al}{\sq}}\sinh_\sq - \cosh_\sq  \big) +\big(\textstyle{\frac{2\al}{\gamma}} k_1-\textstyle{\frac{\gamma}{2}}  k_2\big ) * \sinh_\sq   -( k_1-\al  k_2) *  \cosh_\sq,\nonumber \\    
\gre & = \fre^o  - c \big (\textstyle{\frac{2\be}{\sq}}\sinh_\sq + \cosh_\sq \big )+\big(\textstyle{\frac{2\be}{\gamma}} k_1+\textstyle{\frac{\gamma}{2}} k_2\big) *\sinh_\sq  + ( k_1+\be  k_2)*\cosh _\sq,  \label{proj:g}
 \end{align}
where functions $ k_1,  k_2$  and the constant $c$ are given by, resp.,  \eqref{proj:F} and  \eqref{proj:C2}.

\begin{prop}\label{tdwa}
Let $(\fle,\fre)\in \ce$. Then $(g_1,g_2)$ defined by \eqref{proj:g}  belongs to $\z$. 
  \end{prop}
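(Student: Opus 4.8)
The plan is to follow the two-step template of the proof of Proposition \ref{raz}, after noting that here one of the two steps is automatic. Comparing \eqref{proj:g} with \eqref{proj:gr} (and using $f_i-f_i^e=f_i^o$ for the first summand) shows that $(g_1,g_2)=(\fle,\fre)-\p(\fle,\fre)$; since $\p(\fle,\fre)\in\y\subseteq\ce$ by Proposition \ref{raz} and $(\fle,\fre)\in\ce$ by hypothesis, the pair $(g_1,g_2)$ lies in $\ce$ with no further work. (Alternatively, and self-containedly, membership in $\ce$ can be read off \eqref{proj:g} directly: rewriting the convolutions against $\sinh_\sq$ and $\cosh_\sq$ in the exponential--integral form of \eqref{intro:eqdef_g1}--\eqref{intro:eqdef_g2}, the constant $c$ of \eqref{proj:C2} is exactly the one annihilating the term that grows like $\e^{\sq x}$, after which Lemma \ref{lopital} supplies the finite limits at $\pm\infty$.) Thus everything reduces to checking the two algebraic identities \eqref{comp:1a}.

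To check \eqref{comp:1a} I would decompose $g_1$ and $g_2$ into even and odd parts by means of \eqref{intro:T}, \eqref{intro:T1}, the oddness of $k_1$ and evenness of $k_2$ (see \eqref{proj:F}), and the fact that $\sinh_\sq$ is odd and $\cosh_\sq$ even. Since $(\phi*\psi)^\es=-\phi^\es*\psi^\es$, a convolution of two functions of definite parity has parity equal to minus the product of the two parities; reading off the even parts gives
\[ g_2^e=-c\cosh_\sq+\tfrac{\sq}{2}\,k_2*\sinh_\sq+k_1*\cosh_\sq,\qquad g_1^e=-g_2^e, \]
together with the companion formulae for $g_1^o$ and $g_2^o$. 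Because $g_1^e=-g_2^e$, the two relations in \eqref{comp:1a} are negatives of each other, so it is enough to establish the one for $g_2^e$. Evaluating \eqref{proj:g} at $x=0$, where all convolutions vanish and $\sinh_\sq(0)=0$, $\cosh_\sq(0)=1$, yields $g_2(0)=-c$, which pins down the free term on the right of that relation.

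It then remains to form $\beta g_2-\alpha g_1^\es$. Writing $g_1^\es=g_1^e-g_1^o=-g_2^e-g_1^o$ gives $\beta g_2-\alpha g_1^\es=(\su)g_2^e+\beta g_2^o+\alpha g_1^o$, and the odd part $\beta g_2^o+\alpha g_1^o$ simplifies through $\alpha^2+\beta^2=\tfrac12\sq^2$ and $k_1=\alpha f_1^o+\beta f_2^o$. Convolving with $e_\su$ and invoking the two identities \eqref{proj:dod}, in the form
\[ e_\su*[\su\cosh_\sq+\sq\sinh_\sq]=\cosh_\sq-e_\su,\qquad e_\su*[\su\sinh_\sq+\sq\cosh_\sq]=\sinh_\sq, \]
together with commutativity and associativity of convolution, collapses the bulk of the expression; adding $g_2(0)e_\su=-c\,e_\su$ then cancels the residual $c\,e_\su$ and reproduces precisely the formula for $g_2^e$ above, which is the relation sought. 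Exactly as in Proposition \ref{raz}, this calculation is insensitive to the actual value of $c$. I expect the only real difficulty to be the convolution bookkeeping: keeping track of the parities of the various convolution terms and applying \eqref{proj:dod} at the right moment; once membership in $\ce$ has been dispatched there is no conceptual obstacle.
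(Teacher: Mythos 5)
Your proposal is correct and follows essentially the same route as the paper's own proof: membership in $\ce$ via Proposition \ref{raz}, the observation that $g_1^e=-g_2^e$ reduces \eqref{comp:1a} to a single relation, the evaluation $g_2(0)=-c$, and the collapse of $e_\su*(\beta g_2-\alpha g_1^\es)$ via the two identities \eqref{proj:dod}, whose bracketed expressions vanish identically. The only (cosmetic) difference is that you assemble $\beta g_2-\alpha g_1^\es$ from the even/odd parts using $g_1^\es=g_1^e-g_1^o$, whereas the paper computes it directly from \eqref{proj:g}.
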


\begin{proof} 
The fact that $(g_1,g_2)= (\fle,\fre)- \p (\fle,\fre) $ belongs to $ \ce$ follows from Proposition \ref{raz} (it is only here that the value of $c$ is of importance). Hence, we need to show that \eqref{comp:1a} holds.
To this end, using \eqref{proj:g}, we find that
 \begin{align*}
  g_1^e & = c\cosh_\gamma -  \textstyle{\frac{\gamma}{2}}  k_2* {\sinh_\gamma} - k_1 *{\cosh_\gamma}=- g_2^e  .
 \end{align*}
Thus, the second relation of \eqref{comp:1a} is satisfied whenever the first one is.

On the other hand, by  \eqref{proj:g},  \eqref{intro:T} and \eqref{intro:T1}, we have
\begin{align*} \be g_2 - \al {g_1^\es} &=   k_1 - c(\gamma {\sinh_\gamma} +(\alpha+\beta) {\cosh_\gamma}) 
\\
&\phantom{=}+ \gamma \big( k_1+ \pol  (\su)  k_2\big)* {\sinh_\gamma}+\big((\su)  k_1+ \textstyle{\frac{ \gamma^2}{2}}  k_2\big)* {\cosh_\gamma}.\end{align*}
Since, by \eqref{proj:g}, $g_2(0)=-c$, the second relation in \eqref{proj:dod} reduces our task to proving that 
\begin{align*}
{ k_1}* [ {e_{\alpha+\beta}} * (  \gamma & {\sinh_\gamma} + (\alpha+\beta) {\cosh_\gamma})-{\cosh_\gamma}+e_{\alpha+\beta}  ]
\\& = \textstyle{\frac  \gamma 2}{ k_2}* [ {\sinh_\gamma} - {e_{\alpha+\beta}}*((\su)  {\sinh_\gamma} + \gamma  {\cosh_\gamma})]. \end{align*}
To complete the proof we observe that, by \eqref{proj:dod}, both expressions in brackets are identically zero.
\end{proof}

\begin{thm}\label{ttrzy}The space $\ce $ is a direct sum of two subspaces:
\[ \ce = \y \oplus \z .\] 
Moreover, $\p$ is a projection on $\y$ and $\q$ is a projection on $\z$. \end{thm}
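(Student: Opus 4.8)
The plan is to derive the full statement from the two inclusions already established---the range of $\p$ lies in $\y$ (Proposition \ref{raz}) and the range of $\q$ lies in $\z$ (Proposition \ref{tdwa})---together with one further fact, the triviality of the intersection $\y\cap\z=\{0\}$. Granting this, the argument is purely formal. Since $\p+\q=I_{\ce}$ by \eqref{proj:7}, every $w\in\ce$ satisfies $w=\p w+\q w$ with $\p w\in\y$ and $\q w\in\z$, so $\ce=\y+\z$; and $\y\cap\z=\{0\}$ makes the sum direct, so each $w$ has a \emph{unique} decomposition into a $\y$-component and a $\z$-component. Comparing $w=\p w+\q w$ with this unique decomposition identifies $\p w$ with the $\y$-component and $\q w$ with the $\z$-component; in particular $\p$ fixes $\y$ and $\q$ fixes $\z$, whence $\p^2=\p$ (apply $\p$ to $\p w\in\y$), $\q^2=\q$, and the two ranges are exactly $\y$ and $\z$. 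Thus everything reduces to showing $\y\cap\z=\{0\}$.

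To this end I would fix $(g_1,g_2)\in\y\cap\z$ and show it is zero by exploiting the finiteness-at-infinity built into $\cer$. Adding the two relations \eqref{comp:1a} defining $\z$ gives $g_1^e=-g_2^e$; set $h\coloneqq g_2^e=-g_1^e$ and $v\coloneqq g_2-g_1^\es$. Membership in $\y$ (Lemma \ref{lem:ab2}) reads $g_1^o=\alpha\,e_{\alpha+\beta}*v$ and $g_2^o=\beta\,e_{\alpha+\beta}*v$; substituting these, together with $g_1^e=-h$ and $(g_1^o)^\es=-g_1^o$, into the definitions of $v$ and of $\beta g_2-\alpha g_1^\es$ yields the two convolution identities
\[ v=2h+(\alpha+\beta)\,e_{\alpha+\beta}*v,\qquad \beta g_2-\alpha g_1^\es=(\alpha+\beta)h+\tfrac{\gamma^2}{2}\,e_{\alpha+\beta}*v, \]
where I have used $\alpha^2+\beta^2=\tfrac{\gamma^2}{2}$. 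Inserting the second identity into the second line of \eqref{comp:1a} turns the relation for $h=g_2^e$ into a closed convolution equation in $h$ and $v$.

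These equations I would solve by the half-line Laplace transform of Section \ref{cun}. Because $\wh{e_{\alpha+\beta}}(\lambda)=(\lambda+\alpha+\beta)^{-1}$, they become linear in $\wh h$ and $\wh v$; eliminating $\wh v$ collapses them to $\wh h(\lambda)=g_2(0)\,\lambda\,(\lambda^2-\gamma^2)^{-1}$. By injectivity of the Laplace transform this says $g_2^e=g_2(0)\cosh_\gamma$ on $[0,\infty)$. But $g_2^e\in\cer$ possesses a finite limit at $+\infty$ whereas $\cosh_\gamma$ does not (here $\gamma>0$ is essential), so necessarily $g_2(0)=0$ and hence $h\equiv0$. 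Feeding $h=0$ back gives $\wh v=0$, so $v=0$ on $[0,\infty)$ and therefore $e_{\alpha+\beta}*v=0$ there; consequently $g_1^o=g_2^o=0$ on $[0,\infty)$, and being odd they vanish identically, while $g_1^e=-g_2^e=0$. Thus $g_1=g_2=0$, proving $\y\cap\z=\{0\}$.

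The one genuinely substantive step is this last one. The system for $(h,v)$ carries the exponentially growing mode $\cosh_\gamma$, and the whole force of the theorem is that such a mode cannot live in $\cer$---the very same growth obstruction that forced the choice of the constant $c$ in \eqref{proj:C2} during the heuristic derivation. Once the limit condition defining $\cer$ is seen to annihilate $g_2(0)$, the remainder is bookkeeping. The only point requiring mild care is that all convolutions are taken for functions on the whole line, so I would consistently restrict the identities to $[0,\infty)$---where, by part (c) of Lemma \ref{lem:ab2}, a pair in $\y$ is anyway fully determined by its half-line values---before passing to the Laplace transform and inverting.
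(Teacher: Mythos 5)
Your proposal is correct and follows essentially the same route as the paper: the formal reduction of everything to $\y\cap\z=\{0\}$ via Propositions \ref{raz} and \ref{tdwa}, and then restriction of the defining relations to $[0,\infty)$ followed by a half-line Laplace transform whose solution exhibits a $\cosh_\gamma$-type growth mode incompatible with membership in $\cer$, forcing $g_2(0)=0$ and hence the zero pair, with Lemma \ref{lem:ab2} finishing the argument. The paper solves the transformed $2\times 2$ system in the unknowns $(\wh{g_1^\es},\wh{g_2})$ rather than your $(\wh h,\wh v)$, but this is merely a change of variables, not a different argument.
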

\begin{proof}
Since, by Proposition \ref{raz} and Proposition \ref{tdwa}, \[ \ce = \y + \z,\] to prove the first part  we need to show only that  $\y \cap \z = \{0\}.$

To this end, we take $(\gle,\gre)\in \y \cap \z$.
By $\gle^\es = \gle^e - \gle^o$ and $\gre = \gre^e + \gre^o$,  relations     \eqref{proj:1aa}  and \eqref{comp:1a} yield
\begin{align}
\gle^\es &=-\gre(0)e_\su - e_{\alpha+\beta} *[(\alpha+\beta)\gre-2\al\gle^\es],  \nonumber  \\ 
\gre&=\gre(0)e_\su  +e_{\alpha+\beta}* [2\be\gre-(\alpha+\beta)\gle^\es] . \label{sum1}
\end{align}

We note that all functions that feature here are of at most exponential growth. Therefore, their Laplace transforms are well-defined, and in the Laplace transform terms this system takes the form:
\begin{align*}
(\lam -\al+\be) \wh {\gle^\es} (\lam) + (\alpha+\be) \wh \gre (\lam) &=- g_2(0),\\
(\al+\be) \wh {\gle^\es }(\lam) + (\lam +\al-\be) \wh \gre(\lam) &= g_2(0).\end{align*}
 As long as  $\lam >\gamma$, this system has a unique solution,  given by 
\begin{align*}
\wh {\gle^\es}(\lam) = -g_2(0) \frac{\lam +2 \al }{\lam^2 -\gamma^2}, \;\;\;\;\;\;
\wh \gre(\lam) = g_2(0) \frac{\lam  +2\beta}{\lam^2 -\gamma^2} . 
\end{align*}
Therefore, $
\gre(x)=g_2(0)( \cosh_\gamma(x)+2\be \gamma^{-1}\sinh_\gamma (x))$ for $x\ge 0.$ 
 Since $g_2 \in \cer $, however, we have to have $g_2(0)=0$. Thus $(g_2)_{[0,\infty)}=(g_1^T)_{[0,\infty)}= 0$. Now, 
 Lemma \ref{lem:ab2} says that for $(g_1,g_2)\in \y$, the parts $(g_1^T)_{[0,\infty)}$ and $(g_2)_{[0,\infty)}$ determine the entire pair (comp. the end of the proof of Proposition \ref{propek1}) In particular, $(g_2)_{[0,\infty)}=(g_1^T)_{[0,\infty)}= 0$ implies $g_1=g_2=0$, completing the proof of the first part. 
 
The second part now easily follows. To wit, if $(f_1,f_2)$ belongs to $\y$, then so does $(f_1,f_2) - \p (f_1,f_2)$, because of Proposition \ref{raz}. On the other hand, by Proposition \ref{tdwa}, $(f_1,f_2) - \p (f_1,f_2)$ belongs to $\z$. 
In the first part we have proved, however, that $\y \cap \z=\{0\}$. Hence, $\p (f_1,f_2)=(f_1,f_2).$ Similarly, we show that $\q (f_1,f_2) = (f_1,f_2)$ for $(f_1,f_2) \in \z.$ This together with Propositions \ref{raz} and \ref{tdwa} completes the proof. \end{proof}

\section{ $\z$ as an invariant subspace related to conditions \eqref{intro:bc2}}\label{gener}
So far, our main result (Theorem \ref{ttrzy}) establishes that $\mc C$ is a direct product of two subspaces, one of which, namely $\y$, is invariant under the basic Cartesian product cosine family (and thus for the Cartesian product of two copies of Brownian motion semigroup, as well). Moreover, by Theorem \ref{ab:genth}, the snapping out Brownian motion semigroup and the related cosine family are in fact isomorphic (similar) to the subspace semigroup and the subspace cosine family in $\y$. 
At present, however, it is yet unclear whether $\z$ has a similar property. The aim of this section is to show that $\z$ is indeed an invariant subspace and that the related subspace semigroup and cosine family are isomorphic (similar) to the semigroup and cosine family generated by a Laplace operator with transmission conditions \eqref{intro:bc2}. Invariance of $\z$ is established in Theorem \ref{thm:inv}; connection with transmission conditions is provided in Theorem \ref{gen_th}.

\subsection{Further invariant subspaces for the Cartesian product cosine family}

We take a similar approach to that presented in Section \ref{exten}, that is, we start with some observations  on the basic cosine family viewed as composed of linear operators acting in $\mathfrak C(\mathbb R)$. We note, namely, that for any $a>0$,
\begin{equation*} C(t) e_a (x) = \e^{-ax}e_a^e (t) \ \text{ and } \   C(t)\phi (0)=\phi^e (t),  \qquad t,x\in \R.  \end{equation*}
 Therefore, by \eqref{ab:1}, we see that for all $t,x\in \R$ and $\phi \in \mathfrak C(\R)$:
\begin{align}\label{gener:2} C(t) (ae_a * \phi+\phi(0)e_a) (x) &=(a e_a * C(t)\phi +   [C(t)\phi (0)]e_a)(x) \\
 &\phantom{=}-\e^{-ax} [\phi  - ae_a*\phi -\phi(0)e_a  ]^e (t).  \nonumber \end{align}
This counterpart of \eqref{ab:1} shows that also functions $\phi \in \mathfrak C(\R)$ such that $\phi  -  a(e_a*\phi) -  \phi(0)e_a$ is odd,  are of special importance for the basic cosine family.  Here is a lemma that summarizes their basic properties.

\begin{lem}\label{lem:gener1}For $\phi \in \mathfrak C(\R)$ the following conditions are equivalent. 
\begin{enumerate} 
\item $\phi^e=  a e_a *\phi+\phi(0)e_a$ on $\R^+$;
\item $\phi^e= a e_a *\phi+\phi(0)e_a$ on $\R$;
\item $\phi  - ae_a*\phi -\phi(0)e_a$ is odd, 
\item For all $t\in \R$, 
 \begin{equation} C(t)(a  e_a * \phi+\phi(0)e_a)=a e_a *C(t)\phi+[C(t)\phi(0)]e_a \quad \text{on } \R. \end{equation}
\end{enumerate}
 \end{lem}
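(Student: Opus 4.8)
The plan is to follow the template of Lemma~\ref{lem:ab1} and prove the chain of equivalences (a)$\iff$(b), (b)$\iff$(c) and (c)$\iff$(d). Throughout I abbreviate $\psi \coloneqq a e_a * \phi + \phi(0) e_a$ and $\chi \coloneqq \phi - \psi$, so that (a) and (b) read $\phi^e = \psi$ on $\R^+$ and on $\R$ respectively, while (c) reads `$\chi$ is odd'.

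The equivalence (c)$\iff$(d) is immediate from \eqref{gener:2}: that identity exhibits the difference of the two sides of the displayed formula in (d) as $-\e^{-ax}\chi^e(t)$, and since $\e^{-ax}$ never vanishes this defect is zero for all $t,x \in \R$ precisely when $\chi^e \equiv 0$, i.e. when $\chi$ is odd. The implication (b)$\Rightarrow$(a) is a trivial restriction, and (b)$\Rightarrow$(c) is almost as quick: if $\phi^e = \psi$, then $\psi$ is even, being equal to $\phi^e$, whence $\chi = \phi - \phi^e = \phi^o$ is odd.

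What remains, and what carries the real weight of the lemma, are the two implications that start from partial information and must in particular establish that $\psi$ is even --- a fact far from obvious at first sight. For (a)$\Rightarrow$(b) I would argue exactly as in Lemma~\ref{lem:ab1}, using that a function in $\mathfrak C(\R)$ is determined by its values on $\R^+$ together with the values of its reflection on $\R^+$: from (a) one obtains $\phi^\es = -\phi + 2\psi$ on $\R^+$, and substituting this into the identity $\psi^\es = -a e_{-a} * \phi^\es + \phi(0) e_{-a}$ and simplifying by means of the Hilbert equation \eqref{hilbert} collapses $\psi^\es$ to $\psi$ on $\R^+$; since $\phi^e = \psi$ there by (a), this yields $\phi^e = \psi$ on all of $\R$.

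For (c)$\Rightarrow$(b) the cleanest route I foresee introduces $u \coloneqq e_a * \phi$ and $v \coloneqq e_{-a} * \phi^\es$, which satisfy $u' = \phi - a u$, $v' = \phi^\es + a v$ and $u(0) = v(0) = 0$. Writing out `$\chi$ odd' as $\chi^\es = -\chi$ and using $(e_a * \phi)^\es = -e_{-a} * \phi^\es$ from \eqref{intro:T1}, the condition rearranges to $u' + v' = \phi(0)(e_a + e_{-a})$; integrating from $0$ with the vanishing initial values then gives $u + v = a^{-1}\phi(0)(e_{-a} - e_a)$, which is exactly the statement $\psi^o = 0$, i.e. $\psi$ is even. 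Combined with (c), which says $\phi^e = \psi^e$, this delivers $\phi^e = \psi$, namely (b). I expect the recurring need to prove $\psi$ even to be the genuine obstacle: it is this point --- handled through the Hilbert equation in the step from (a) and through the $u,v$ integration in the step from (c) --- that requires actual computation, the rest of the argument being formal.
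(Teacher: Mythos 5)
Your proposal is correct and follows essentially the same route as the paper's proof: (a)$\Rightarrow$(b) by reflecting and collapsing via the Hilbert equation \eqref{hilbert}, (b)$\Rightarrow$(c) by identifying the difference with $\phi^o$, (c)$\Rightarrow$(b) by recognizing the condition as a derivative identity and integrating from $0$ with vanishing initial values (your $u+v = a^{-1}\phi(0)(e_{-a}-e_a)$ is exactly the paper's $a(e_a*\phi)^o = \phi(0)\sinh_a$), and (c)$\iff$(d) directly from \eqref{gener:2}. The only differences are notational ($\psi$, $\chi$, $u$, $v$ versus the paper's inline manipulations), not mathematical.
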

\begin{proof}  We proceed as in Lemma \ref{lem:ab1}.
In order to show (b), if (a) is assumed, we need to check, by \eqref{intro:T1}, that $\phi^e=- a e_a^\es *\phi^\es+ \phi(0)e_a^\es$ on $\R^+$, that is, by (a), that \begin{align}-a e_a^\es * \phi^\es+\phi(0)e_a^\es=a e_a * \phi +\phi(0)e_a\qquad \text{ on } \R^+.\label{gener:3} \end{align} 
On the other hand, using assumption (a) again, we obtain that the left hand side of \eqref{gener:3} equals
\begin{align*}-a  e_{-a} *(-\phi+ &2ae_a*\phi+2\phi(0)e_a)+\phi(0)e_{-a}  \\
&=a(e_{-a} - 2a e_{-a}* e_a) * \phi+\phi(0)(e_{-a} - 2a e_{-a}* e_a) , \end{align*}
and the last expression is equal to the right hand side of \eqref{gener:3} by the Hilbert equation \eqref{hilbert}. This completes the proof of the implication. The converse is trivial. 

Next, (b) says that $\phi  - ae_a*\phi -\phi(0)e_a$ is the odd part of $\phi$, and thus clearly implies (c). 
Conversely, (c) says that 
\begin{equation} \phi^e=a(e_a*\phi )^e+\phi(0)\cosh_a. \label{fis:1} \end{equation}
When expanded, this reads $\phi - a (e_a * \phi) + \phi^\es + a (e_{-a} *\phi^\es) = 2\phi (0) \cosh_a $. 
Since the left-hand side here is the derivative of $e_a *\phi + e_{-a}*\phi^\es $, we see that the derivatives of $(e_a *\phi)^o$ and $a^{-1} \phi (0)\sinh_a$ coincide. It follows that $a(e_a *\phi)^o = \phi (0)\sinh_a$ because both functions vanish at $0$. This, together with \eqref{fis:1} renders
$a e_a * \phi = a (e_a * \phi)^e + a (e_a * \phi)^o = \phi^e - \phi(0)e_a $, and thus proves (b).  
Finally, (c) is equivalent to (d) by \eqref{gener:2}, completing the proof. 
 \end{proof}

As a direct consequence of the lemma we obtain the following information on invariant subspaces for $\{\ced (t),\, t\in \R\}$.  
\begin{cor}\label{invariant} The subspaces 
\begin{align*}\mathsf  D_{a} & \coloneqq \{(\phi_1,\phi_2) \in \ce:{\textstyle \frac  a 2}\phi_1^e = ae_a*\phi_2+\phi_2(0)e_a= \phi_2^e\},
\\
\mathsf D_a^\sharp & \coloneqq \{(\phi_1,\phi_2)\in \ce: \phi_1^e = a e_a * \phi_1+\phi_1(0)e_a, \phi_2^e=0\}
\end{align*}
 are invariant under the basic Cartesian product cosine family. \end{cor}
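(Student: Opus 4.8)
The plan is to mimic the proof of Corollary \ref{cor:ab1}, with Lemma \ref{lem:gener1} now playing the role that Lemma \ref{lem:ab1} played there. The guiding observation is that, for each of the two subspaces, one of the defining relations is \emph{exactly} condition (b) of Lemma \ref{lem:gener1} for the relevant coordinate function; that function therefore automatically satisfies the equivalent condition (d), and the commutation identities \eqref{tran} then allow me to transport all the defining relations from a pair $(\phi_1,\phi_2)$ to its image $\ced(t)(\phi_1,\phi_2)=(C(t)\phi_1,C(t)\phi_2)$.

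Concretely, for $\mathsf D_a^\sharp$ I would take $(\phi_1,\phi_2)\in \mathsf D_a^\sharp$ and note that $\phi_1^e = ae_a*\phi_1+\phi_1(0)e_a$ is condition (b) of Lemma \ref{lem:gener1}, so $\phi_1$ obeys (d). Combining the second identity in \eqref{tran} with (d) gives $(C(t)\phi_1)^e = C(t)\phi_1^e = C(t)(ae_a*\phi_1+\phi_1(0)e_a) = ae_a*C(t)\phi_1+[(C(t)\phi_1)(0)]e_a$, which is precisely the first defining relation for $C(t)\phi_1$. For the second coordinate, $\phi_2^e=0$ yields $(C(t)\phi_2)^e = C(t)\phi_2^e = 0$ by \eqref{tran}, so $\ced(t)(\phi_1,\phi_2)\in\mathsf D_a^\sharp$. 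For $\mathsf D_a$ I would take $(\phi_1,\phi_2)\in\mathsf D_a$, so that $\tfrac a2\phi_1^e=\phi_2^e$ and $\phi_2^e=ae_a*\phi_2+\phi_2(0)e_a$; the latter is condition (b) for $\phi_2$, giving (d) for $\phi_2$. As above this produces $(C(t)\phi_2)^e = ae_a*C(t)\phi_2+[(C(t)\phi_2)(0)]e_a$, the second equality defining $\mathsf D_a$, while applying $C(t)$ to $\tfrac a2\phi_1^e=\phi_2^e$ and invoking \eqref{tran} on both sides gives $\tfrac a2(C(t)\phi_1)^e=(C(t)\phi_2)^e$; transitivity then yields the full defining chain, so $\ced(t)(\phi_1,\phi_2)\in\mathsf D_a$.

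I do not expect any genuine obstacle here, since the argument is a direct substitution once Lemma \ref{lem:gener1} is in hand. The only point deserving a little care is the bookkeeping around the origin: one must identify the scalar $[C(t)\phi](0)$ appearing in condition (d) with the value $(C(t)\phi)(0)$ that enters the defining relations of the image pair, which is immediate but should be spelled out to avoid confusion in the $\phi(0)e_a$ terms.
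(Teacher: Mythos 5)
Your proposal is correct and follows essentially the same route as the paper: the paper likewise invokes the equivalence (b)$\iff$(d) of Lemma \ref{lem:gener1} for the coordinate satisfying the convolution identity, and the commutation relations \eqref{tran} to transport the defining conditions to $(C(t)\phi_1,C(t)\phi_2)$, treating $\mathsf D_a$ explicitly and declaring $\mathsf D_a^\sharp$ analogous. The only difference is the order in which the two subspaces are handled, which is immaterial.
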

\begin{proof} To show invariance of $\mathsf D_{a}$, similarly as in Corollary \ref{cor:ab1}, we need to prove that
$(C(t)\phi_2)^e = ae_a*C(t)\phi_2+[C(t)\phi_2(0)]e_a$ and  $({\textstyle \frac  a 2}C(t)\phi_1)^e =( C(t)\phi_2)^e$ 
for $(\phi_1,\phi_2) \in  \mathsf D_a$ and $t\in \R$. The second equality, however, is an immediate consequence of the second relation in \eqref{tran} combined with the defining condition of $\mathsf D_a$. Turning to the first equality, we  note that $\phi_2$  satisfies condition (b) in Lemma \ref{lem:gener1}, and thus  satisfies also the lemma's condition (d). Therefore, using \eqref{tran} and the defining condition of $ \mathsf D_a$  again, we obtain
\[(C(t)\phi_2)^e = C(t)\phi_2^e=C(t)(a  e_a * \phi_2+\phi_2(0)e_a) 
=a e_a *C(t)\phi_2+[C(t)\phi_2(0)]e_a,\]
as desired. Proof of invariance of $ \mathsf D_{a}^\sharp$ is analogous. \end{proof}

Our next result establishes an intimate, key connection between the space $\z$ defined in Section \ref{comp} and the spaces $\mathsf D_\su$ and $\mathsf D_{\alpha+\beta}^\sharp$. 

\begin{lem} \label{lem:gener2}
We have  \[ \z=M(\mathsf D_\su), \] where $M$ is defined by \eqref{ab:2.5}, as long as $\al \neq \be$. In the other case, 
\[ \mathcal D_{\alpha,\alpha} = M^\sharp (\mathsf D_{\alpha+\beta}^\sharp),\]
 for $M^\sharp$ introduced in \eqref{ab:4.5}.
\end{lem}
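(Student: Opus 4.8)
The plan is to mirror the proof of Lemma \ref{lem:ab2}, which characterized $\y=M(\mathsf C_{\su,\gamma^2/2})$: I fix $(\psi_1,\psi_2)\in\ce$, pass to the preimage $(\phi_1,\phi_2)\coloneqq M^{-1}(\psi_1,\psi_2)$, and show that $(\phi_1,\phi_2)\in\mathsf D_\su$ holds exactly when $(\psi_1,\psi_2)$ satisfies the defining relations \eqref{comp:1a} of $\z$. Since $M$ is a linear bijection of $\ce$, this equivalence is precisely the claimed identity $\z=M(\mathsf D_\su)$. For $\alpha\neq\beta$ the inverse was already recorded in \eqref{added}, giving $\phi_1=\psi_2-\psi_1^\es$ and $\phi_2=\beta\psi_2-\alpha\psi_1^\es$; applying the symmetry rules \eqref{intro:T} then yields $\phi_1^e=\psi_2^e-\psi_1^e$ and $\phi_2^e=\beta\psi_2^e-\alpha\psi_1^e$.

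First I would dispose of the ``algebraic'' relation in the definition of $\mathsf D_\su$. The requirement $\tfrac{\su}2\phi_1^e=\phi_2^e$ becomes $\tfrac{\su}2(\psi_2^e-\psi_1^e)=\beta\psi_2^e-\alpha\psi_1^e$, and dividing by $\tfrac{\alpha-\beta}2\neq0$ shows that this is equivalent to $\psi_1^e=-\psi_2^e$ --- exactly the relation one gets by adding the two equations of \eqref{comp:1a}. Granting $\psi_1^e=-\psi_2^e$, the remaining relation $\phi_2^e=\su\,e_\su*\phi_2+\phi_2(0)e_\su$ simplifies: its left side is $\phi_2^e=\beta\psi_2^e-\alpha\psi_1^e=\su\psi_2^e$, while restricting $\psi_1^e=-\psi_2^e$ to $x=0$ (where $\psi_i^e(0)=\psi_i(0)$) gives $\psi_1(0)=-\psi_2(0)$ and hence $\phi_2(0)=\beta\psi_2(0)-\alpha\psi_1(0)=\su\psi_2(0)$. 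Dividing through by $\su>0$ turns the relation into $\psi_2^e=\psi_2(0)e_\su+e_\su*(\beta\psi_2-\alpha\psi_1^\es)$, which is the second line of \eqref{comp:1a}; together with $\psi_1^e=-\psi_2^e$ this is equivalent to the full pair \eqref{comp:1a}. Running the chain of equivalences in both directions gives $(\phi_1,\phi_2)\in\mathsf D_\su\iff(\psi_1,\psi_2)\in\z$.

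The case $\alpha=\beta$ I would handle in complete parallel, now using $(\phi_1,\phi_2)\coloneqq(M^\sharp)^{-1}(\psi_1,\psi_2)=(\psi_2-\psi_1^\es,\;\psi_2+\psi_1^\es)$ (the inverse displayed in the proof of Lemma \ref{lem:ab2}) and the defining relations of $\mathsf D_{\alpha+\beta}^\sharp$. There the condition $\phi_2^e=0$ again forces $\psi_1^e=-\psi_2^e$, and after substituting $\su=2\alpha=2\beta$ the remaining condition $\phi_1^e=\su\,e_\su*\phi_1+\phi_1(0)e_\su$ loses a common factor $2$ and reproduces the second line of \eqref{comp:1a} with $\alpha=\beta$; this gives $\mathcal D_{\alpha,\alpha}=M^\sharp(\mathsf D_{\alpha+\beta}^\sharp)$.

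I expect the only real bookkeeping hurdle to be keeping track of the rank-one term $e_\su$ alongside the scalar $\su$: once $\psi_1^e=-\psi_2^e$ is imposed, both the convolution $\su\,e_\su*\phi_2$ and the boundary contribution $\phi_2(0)e_\su$ must acquire exactly the factor $\su$ that cancels against $\phi_2^e=\su\psi_2^e$, and it is the pointwise evaluation $\psi_1(0)=-\psi_2(0)$ that makes the constant term match. Everything else reduces to routine use of \eqref{intro:T} and \eqref{intro:T1} for the operations $f\mapsto f^\es,f^e,f^o$, exactly as in Lemma \ref{lem:ab2}.
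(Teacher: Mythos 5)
Your proposal is correct and takes essentially the same route as the paper: both pass to the preimage via $M^{-1}$ (resp. $(M^\sharp)^{-1}$) as recorded in \eqref{added}, push the defining relations of $\mathsf D_\su$ (resp. $\mathsf D_{\alpha+\beta}^\sharp$) through the symmetry rules \eqref{intro:T}, and match the result against \eqref{comp:1a}, with the evaluation at $0$ (giving $\phi_2(0)=(\alpha+\beta)\psi_2(0)$, equivalently $\psi_2(0)=(\alpha+\beta)^{-1}\phi_2(0)$) supplying the constant term. Your chain-of-equivalences packaging is slightly tidier in that it yields both inclusions at once and treats the case $\alpha=\beta$ explicitly, where the paper proves $\z\supset M(\mathsf D_{\alpha+\beta})$, declares the reverse inclusion ``similar,'' and omits the equal-parameters case.
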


\begin{proof} We argue similarly as in Lemma \ref{lem:ab2}. Let us assume that $\alpha \not =\beta$, a pair $(\phi_1,\phi_2)$ belongs to $\mathsf D_{\alpha+\beta}$, and $(\psi_1,\psi_2)$ satisfies \eqref{added}. Then, by the definition of $\mathsf D_{\alpha+\beta}$, 
\begin{align*} \binom{\psi_1^e}{\psi_2^e} &=   \frac1{\alpha-\beta} \binom{\beta \phi_1^e-\phi_2^e}{\alpha \phi_1^e-\phi_2^e} =\binom{ -e_{\alpha+\beta} *\phi_2-(\alpha+\beta)^{-1} \phi_2(0) e_{\alpha+\beta}}{
 e_{\alpha+\beta} *\phi_2+(\alpha+\beta)^{-1} \phi_2(0) e_{\alpha+\beta}}.
 \end{align*}
Also, by the same definition, $\frac {\alpha+\beta} 2 \phi _1(0) = \phi_2(0)$ and this implies that $\psi_2(0)=(\alpha+\beta)^{-1} \phi_2(0)$. It follows that the lower entry in the last matrix is $ e_{\alpha+\beta} * (\beta \psi_2 - \alpha\psi_1^\es)+\psi_2(0) e_{\alpha+\beta}$. This shows that $(\psi_1,\psi_2)$ belongs to $\z$ (see \eqref{comp:1a}), that is, that $\z \supset M(\mathsf D_{\alpha+\beta})$. The inclusion  $\z \subset M(\mathsf D_{\alpha+\beta})$ is proved similarly.

We omit the details of the case $\alpha=\beta$. 
\end{proof}

Since, as we have already remarked, operators $M$ and $M^\sharp$  commute with the basic Cartesian product cosine family, Corollary \ref{invariant} and Lemma \ref{lem:gener2}, when combined, yield the following crucial result.   

\begin{thm}\label{thm:inv} The space $\z$ is invariant under the basic Cartesian product cosine family. \end{thm}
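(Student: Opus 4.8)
The plan is to obtain invariance of $\z$ as an immediate consequence of the two structural facts just established, Corollary~\ref{invariant} and Lemma~\ref{lem:gener2}, combined with the fact that the matrix-induced operators $M$ and $M^\sharp$ commute with the family $\{\ced(t),\, t\in \R\}$. The one general principle I would isolate first is this: if a bounded operator $T$ on $\ce$ satisfies $T\ced(t) = \ced(t)T$ for all $t$, and $\mathsf V \subset \ce$ is invariant under $\{\ced(t),\, t\in \R\}$, then $T(\mathsf V)$ is invariant as well. The verification is one line: an arbitrary element of $T(\mathsf V)$ has the form $T\phi$ for some $\phi \in \mathsf V$, and then $\ced(t) T\phi = T \ced(t)\phi \in T(\mathsf V)$, since $\ced(t)\phi \in \mathsf V$ by invariance of $\mathsf V$.

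With this in hand I would split along the same dichotomy used in Lemma~\ref{lem:gener2}. In the case $\alpha \neq \beta$, Corollary~\ref{invariant} gives that $\mathsf D_{\alpha+\beta}$ is invariant under the basic Cartesian product cosine family; applying the principle above with $T = M$ --- which commutes with $\{\ced(t)\}$ by the form \eqref{ab:2} of the matrix action together with the third identity in \eqref{tran} --- shows that $M(\mathsf D_{\alpha+\beta})$ is invariant. Lemma~\ref{lem:gener2} identifies this image with $\z$, so $\z$ is invariant. In the remaining case $\alpha = \beta$, the identical argument applies with $\mathsf D_{\alpha+\beta}^\sharp$ in place of $\mathsf D_{\alpha+\beta}$ and $T = M^\sharp$ in place of $M$, yielding invariance of $\mathcal D_{\alpha,\alpha} = M^\sharp(\mathsf D_{\alpha+\beta}^\sharp)$.

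Since both inputs are already proved, I do not anticipate a genuine obstacle here: the substantive work was carried out in establishing Lemma~\ref{lem:gener2} (the identification $\z = M(\mathsf D_{\alpha+\beta})$) and Corollary~\ref{invariant} (invariance of the $\mathsf D$-type subspaces). The only points deserving attention are to match the commuting operator correctly to each case and not to overlook the $\alpha = \beta$ branch; the commuting property itself reduces to the identity $(C(t)f)^\es = C(t)f^\es$ recorded in \eqref{tran}, which is precisely what lets the matrix action \eqref{ab:2} intertwine with $\ced$.
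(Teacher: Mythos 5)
Your proposal is correct and follows essentially the same route as the paper: the paper derives Theorem \ref{thm:inv} in exactly this way, by combining the invariance of $\mathsf D_{\alpha+\beta}$ and $\mathsf D_{\alpha+\beta}^\sharp$ (Corollary \ref{invariant}) with the identifications $\z = M(\mathsf D_{\alpha+\beta})$ and $\mathcal D_{\alpha,\alpha} = M^\sharp(\mathsf D_{\alpha+\beta}^\sharp)$ (Lemma \ref{lem:gener2}) and the fact that $M$, $M^\sharp$ commute with $\{\ced(t),\, t\in\R\}$ via the third relation in \eqref{tran}. Your explicit statement of the general principle (a commuting bounded operator maps invariant subspaces to invariant subspaces) and the careful case split $\alpha\neq\beta$ versus $\alpha=\beta$ merely spell out what the paper leaves implicit.
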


We also note the following corollary to Lemma \ref{lem:gener2}, that will turn out important in Section \ref{cwtc}. 

\begin{cor}\label{gen:cor}
For $(\psi_1,\psi_2)\in \ce$ the following conditions are equivalent. 
\begin{enumerate} 
\item $(\psi_1,\psi_2)$ belongs to $\z$, 
\item $\psi_2^e=  e_{\alpha+\beta} *(\beta\psi_2 - \alpha \psi_1^\es)+\psi_2(0)e_\su=-\psi_1^e$
on $\R_+$.
\end{enumerate}

\end{cor}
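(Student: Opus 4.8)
Corollary \ref{gen:cor} — establishing the equivalence of membership in $\z$ with a single transmission-type condition on the right half-axis.

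The plan is to deduce this corollary directly from Lemma \ref{lem:gener2}, which already identifies $\z$ as $M(\mathsf D_{\alpha+\beta})$ (or $M^\sharp(\mathsf D_{\alpha+\beta}^\sharp)$ in the equal case), together with the defining relations \eqref{comp:1a} of $\z$. The overall strategy mirrors the passage from Lemma \ref{lem:ab1} to Lemma \ref{lem:ab2}, and especially the equivalence (b)$\iff$(c) in Lemma \ref{lem:ab2}: I want to show that the two relations in \eqref{comp:1a}, which are a priori imposed on all of $\R$, are in fact forced once the single equation in (b) holds merely on $\R_+$.

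First I would record that the definition of $\z$ in \eqref{comp:1a} already gives, for $(\psi_1,\psi_2)\in\z$, that $\psi_2^e = \psi_2(0)e_\su + e_\su*(\beta\psi_2 - \alpha\psi_1^\es)$ and that $\psi_1^e = -\psi_2^e$; restricting to $\R_+$ yields exactly condition (b), so the implication (a)$\implies$(b) is immediate. The substance is the converse. Assuming (b) on $\R_+$, I first note that the relation $\psi_1^e = -\psi_2^e$ is an identity between even functions, so if it holds on $\R_+$ it holds on all of $\R$ automatically; this disposes of the first half of \eqref{comp:1a} once the second is recovered. For the second half, I set $\phi \coloneqq \beta\psi_2 - \alpha\psi_1^\es$ and must promote the equality
\[ \psi_2^e = e_\su*\phi + \psi_2(0)e_\su \]
from $\R_+$ to all of $\R$. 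Since $\psi_2^e$ is even, it suffices, exactly as in Lemma \ref{lem:ab2}, to verify that the right-hand side is even, i.e.\ that $\phi - (\alpha+\beta)(e_\su*\phi) - \phi(0)e_\su$ is odd — but this is precisely condition (c) of Lemma \ref{lem:gener1} for $\phi$ with $a = \alpha+\beta$, and that lemma shows it is equivalent to the relation $\phi^e = (\alpha+\beta)e_\su*\phi + \phi(0)e_\su$ holding on $\R_+$. Thus the task reduces to checking that the hypothesis (b) on $\R_+$ delivers this half-axis evenness relation for $\phi$.

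The main obstacle, and the step requiring care, is precisely this reduction: one must check that condition (b) on $\R_+$, which constrains $\psi_2^e$, translates into the correct half-axis relation for the auxiliary function $\phi = \beta\psi_2 - \alpha\psi_1^\es$, so that Lemma \ref{lem:gener1} can be invoked. Here I would differentiate the assumed relation (using \eqref{der_iden}-type identities for $e_\su*\phi$) and combine it with the constraint $\psi_1^e = -\psi_2^e$ on $\R_+$, tracking the value at $0$ so that the constant multiple of $e_\su$ matches $\phi(0)$. Once Lemma \ref{lem:gener1} applies, evenness of $e_\su*\phi + \psi_2(0)e_\su$ is guaranteed and both relations of \eqref{comp:1a} follow on all of $\R$, giving (a). The case $\alpha=\beta$ is handled by the same argument via $M^\sharp$ and $\mathsf D_{\alpha+\beta}^\sharp$, and I would simply remark that the computation is analogous rather than repeat it.
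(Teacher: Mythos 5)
Your proposal is correct and takes essentially the same route as the paper: both proofs reduce (b)$\implies$(a) to the half-axis-to-whole-axis upgrade of Lemma \ref{lem:gener1}, applied with $a=\alpha+\beta$ to the same auxiliary function $\phi=\beta\psi_2-\alpha\psi_1^\es$ (the paper then concludes via $M^{-1}$ and Lemma \ref{lem:gener2}, you by promoting \eqref{comp:1a} through evenness of both sides, which is the same bookkeeping). The only adjustment: the step you leave schematic needs no differentiation --- taking even parts in the definition of $\phi$, using $\psi_1^e=-\psi_2^e$ on $\R^+$ and $\phi(0)=(\alpha+\beta)\psi_2(0)$ (the latter from evaluating (b) at $x=0$), yields the hypothesis of Lemma \ref{lem:gener1}(a) by pure algebra.
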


\begin{proof} From  the characterization \eqref{comp:1a} it follows that  all we need to prove is that (b) implies (a). Let $\alpha \not =\beta$. Assuming (b), we see that for $\binom{\phi_1}{\phi_2}\coloneqq M^{-1} \binom {\psi_1}{\psi_2}= \binom{\psi_2- \psi_1^\es}{\beta \psi_2-\alpha \psi_1^\es}$ 
we have \begin{equation}\label{lojej} \pol  (\alpha+\beta) \phi_1^e = (\alpha+\beta) e_{\alpha+\beta}*\phi_2+\phi_2(0)e_{\alpha+\beta}= \phi_2^e \end{equation}
on $\R^+$. Since this means that $\phi_2$ satisfies condition (a) in Lemma \ref{lem:gener1} (with $a=\alpha+\beta$), the lemma implies that 
$(\alpha+\beta) e_{\alpha+\beta}*\phi_2+\phi_2(0)e_{\alpha+\beta}= \phi_2^e $ holds on $\R$, and thus that \eqref{lojej} holds on $\R$ also. By Lemma \ref{lem:gener2} this shows that $\binom{\psi_1}{\psi_2}$ belongs to $\z$. 

The case of $\alpha=\beta$ is proved similarly. \end{proof}

\subsection{Connection with transmission conditions \eqref{intro:bc2}}\label{cwtc}
We define the space $\xo$ as the subspace of $\x$ composed  of functions satisfying $f(0+)=-f(0-)$ (`$\textnormal{ov}$' stands for `opposite values').
\begin{prop}\label{lem:5.3}
 Let the map $ E^\perp_{\al,\be}  \colon \xo\to \ce$ be given by $f\overset{ E^\perp _{\al,\be}}\longmapsto (\wfl,\wfr)$, where
\begin{align}
\wfl (-x) &= f(-x),  \qquad \wfr (x) = f(x),
 \nonumber \\
\wfl (x) &= -f^\es(x)-2f(0+)e_\su(x)-2 e_\su * [\be f-\al f^\es](x), \nonumber 
\\
\wfr (-x)&=-f(x)+2f(0+)e_\su(x)+ 2 e_\su * [\be f-\al f^\es](x), \quad  x> 0, \label{gener:ex} \end{align}
and $\wfl(0)=f(0-)$, $\wfr(0)=f(0+)$. 
Then $ E^\perp_{\al,\be} $ is an isomorphism of $\xo$ and $\z$.\end{prop}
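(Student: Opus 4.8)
The plan is to mirror the proof of Proposition \ref{propek1} verbatim, only replacing $\y$ by $\z$ and the characterization of Lemma \ref{lem:ab2} by the $\R^+$-characterization of Corollary \ref{gen:cor}. Thus I would proceed in five steps: (1) show $\wfl,\wfr\in\cer$; (2) show the pair $(\wfl,\wfr)$ lies in $\z$; (3) note that $E^\perp_{\al,\be}$ is linear and bounded; (4) prove surjectivity by exhibiting the restriction operator $R$ of Section \ref{tro} as a right inverse; and (5) dispatch injectivity. For Step (1), since $f\in\xo\subset\x$ the limits $\lim_{x\to\infty}\wfr(x)$ and $\lim_{x\to-\infty}\wfl(x)$ exist by definition of $\x$. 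For the two remaining limits, the relevant terms in \eqref{gener:ex} are $f(0+)e_\su$, which tends to $0$ since $e_\su(x)=\mathrm e^{-(\alpha+\beta)x}\to 0$, and $e_\su*[\be f-\al f^\es]$; the latter is handled by Lemma \ref{lopital}, whose first relation gives the finite limit $(\alpha+\beta)^{-1}[\be f(\infty)-\al f(-\infty)]$ at $+\infty$ because $\be f-\al f^\es\in\x$. Substituting into \eqref{gener:ex} shows $\lim_{x\to\infty}\wfl(x)$ and $\lim_{x\to-\infty}\wfr(x)$ exist and are finite, so $\wfl,\wfr\in\cer$.

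For Step (2) I would invoke Corollary \ref{gen:cor}(b), which reduces membership in $\z$ to checking two relations on $\R^+$. On $\R^+$ one reads off from the first line of \eqref{gener:ex} that $\wfr=f$ and $\wfl^\es=f^\es$, while $\wfr(0)=f(0+)$; hence a one-line computation of $\wfr^e$ from \eqref{gener:ex} yields $\wfr^e=f(0+)e_\su+e_\su*(\be f-\al f^\es)=e_\su*(\be\wfr-\al\wfl^\es)+\wfr(0)e_\su$, and the companion computation gives $\wfl^e=-\wfr^e$, both on $\R^+$. By Corollary \ref{gen:cor}(b) this places $(\wfl,\wfr)$ in $\z$. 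This is precisely where the definition \eqref{gener:ex} earns its keep: it is rigged so that these two relations hold by inspection.

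For Steps (3)--(5): linearity is evident and boundedness follows from an estimate of the type \eqref{ab:5a}. For surjectivity, I claim $R|_\z$ is a right inverse of $E^\perp_{\al,\be}$. First, $R$ maps $\z$ into $\xo$: evaluating the relation $\gle^e=-\gre^e$ (the content of \eqref{comp:1a}) at $x=0$ forces $\gle(0)=-\gre(0)$, i.e. $f(0-)=-f(0+)$ for $f=R(\gle,\gre)$. Next, given $(\psi_1,\psi_2)\in\z$ and $f=R(\psi_1,\psi_2)$, I would solve the two $\R^+$ relations of Corollary \ref{gen:cor}(b) for $\psi_2(-x)$ and $\psi_1(x)$ with $x>0$; since the convolutions there use only values of $\psi_2,\psi_1^\es$ on $[0,x]$, where they equal $f,f^\es$, the resulting expressions coincide exactly with the formulas \eqref{gener:ex} for $\wfr(-x)$ and $\wfl(x)$. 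As $\wfl,\wfr$ already agree with $\psi_1,\psi_2$ on their native half-lines, this gives $E^\perp_{\al,\be}R=\mathrm{Id}_\z$. Finally $RE^\perp_{\al,\be}=\mathrm{Id}_{\xo}$ is immediate from the first line of \eqref{gener:ex}, yielding injectivity and completing the proof. I expect Step (4) to be the main obstacle: it is the point where the precise algebraic form of \eqref{gener:ex} must be matched against the defining relations of $\z$, and where the determinacy of a pair in $\z$ by its boundary data (Corollary \ref{gen:cor}, the exact analogue of the parenthetical remark closing Proposition \ref{propek1}) is genuinely used.
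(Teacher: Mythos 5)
Your proposal is correct and follows essentially the same route as the paper's own proof: membership in $\cer$ via Lemma \ref{lopital}, membership in $\z$ via condition (b) of Corollary \ref{gen:cor}, boundedness with the estimate \eqref{piatka}, and surjectivity by exhibiting $R$ restricted to $\z$ as a right inverse (with injectivity immediate from $RE^\perp_{\al,\be}=\mathrm{Id}_{\xo}$). You merely spell out the computations (the $\R^+$ verification of \eqref{gener:ex} against Corollary \ref{gen:cor}(b), and the check that $R$ maps $\z$ into $\xo$) that the paper leaves implicit.
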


\begin{proof} 
To conclude that $\wfl,\wfr\in \cer$ we use  Lemma $\ref{lopital}$ and the existence of  $\wfl(-\infty)$ and $\wfr(\infty)$, which is guaranteed by $f\in \x$. The definition of $ E^\perp_{\al,\be} $ implies also that $(\wfl,\wfr)$ satisfies condition (b) in Corollary \ref{gen:cor}
 and thus belongs to  $\z$. 

Moreover, $ E^\perp_{\al,\be} $ is an injective bounded linear operator with 
\begin{equation}\|E^\perp_{\al,\be}\| \le 5.\label{piatka}\end{equation}
It is also surjective, since the operator $ R$ of Section \ref{tro}, as restricted to $\z$, 
is its right inverse. 
\end{proof}

To establish the link between the subspace $\z$ and boundary conditions \eqref{intro:bc2} we introduce the corresponding generator:

\begin{defn} Let $A^\perp_{\al,\be}$ be the operator in  $\xo$, defined by \[A^\perp_{\al,\be} f = f''\] on the domain $D(A^\perp_{\al,\be})$ consisting of functions $f \in \xo$ satisfying the following
conditions: 
\begin{itemize}
\item[(a)] $f$ is twice continuously differentiable in both
  $(-\infty,0]$ and $[0,\infty)$, separately, with left-hand and
  right-hand derivatives at $x=0$, respectively, 
\item[(b)] both the limits $\lim_{x\to \infty} f'' (x)$ and
  $\lim_{x\to -\infty} f''(x) $ exist and are finite,
 \item[(c)] $f''(0+)=\alpha f'(0-) + \beta f'(0+)$ and $f''(0+)=-f''(0-).$   \end{itemize}
\end{defn}

\begin{thm}\label{gen_th}
The operator $A^\perp_{\al,\be}$ generates the cosine family   $\{\cosabt(t),\,t \in \R\}$ given by
\begin{equation} \label{gener:B} \cosabt (t)  \coloneqq   R C_D(t) E^\perp_{\al,\be}, \end{equation}
where $C_D(t)$ is defined   in  \eqref{intro:7}. \end{thm}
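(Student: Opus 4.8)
The plan is to follow the two-step scheme used in the proof of Theorem \ref{ab:genth}, with the isomorphism $E^\perp_{\al,\be}$ and the invariant subspace $\z$ playing the roles that $E_{\al,\be}$ and $\y$ played there. First I would record that $\{\cosabt(t),\,t\in\R\}$ is a strongly continuous cosine family: by Proposition \ref{lem:5.3} the operator $E^\perp_{\al,\be}$ is an isomorphism of $\xo$ onto $\z$, by Theorem \ref{thm:inv} the subspace $\z$ is invariant under $\{\ced(t),\,t\in\R\}$, and $R$ restricted to $\z$ inverts $E^\perp_{\al,\be}$; hence \eqref{gener:B} exhibits $\{\cosabt(t)\}$ as an isomorphic image of the basic Cartesian product cosine family restricted to $\z$. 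Writing $G$ for its generator, the theorem reduces to the two claims: (i) $D(G)=D(A^\perp_{\al,\be})$, and (ii) $Gf=f''$ on this common domain.

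For claim (i), the isomorphism identifies $D(G)$ with the set of $f\in\xo$ for which $E^\perp_{\al,\be}f=(\wfl,\wfr)$ lies in the domain of the generator of $\{\ced(t)_{|\z}\}$, that is, for which $\wfl$ and $\wfr$ are twice continuously differentiable on all of $\R$ with $(\wfl)'',(\wfr)''\in\cer$. Inspecting \eqref{gener:ex}, away from the origin each extension is a combination of $f$, its reflection $f^\es$, the exponential $e_\su$, and a convolution $e_\su*[\be f-\al f^\es]$; such expressions are twice continuously differentiable on each half-line precisely when $f$ satisfies conditions (a)--(b) of the definition of $A^\perp_{\al,\be}$, and Lemma \ref{lopital} guarantees that the second derivatives have finite limits at $\pm\infty$. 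Thus the whole content of claim (i) is to show that, for such an $f$, the two extensions have matching one-sided first and second derivatives at $0$ if and only if condition (c) holds.

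This matching is the heart of the argument, and I would carry it out by writing the difference quotients from \eqref{gener:ex} and passing to the limit with l'Hospital's Rule, exactly as in Step 1 of Theorem \ref{ab:genth}. The crucial structural feature is that $f\in\xo$ forces $f(0+)+f(0-)=0$: in each one-sided derivative the boundary contributions coming from $e_\su$ and from the convolution carry a factor $f(0+)+f(0-)$ and therefore cancel. As a consequence the first derivatives match automatically, with $(\wfl)'(0)=f'(0-)$ and $(\wfr)'(0)=f'(0+)$, so no first-order transmission condition is imposed. For the second derivatives, a short computation gives $(\wfl)''(0+)=-f''(0-)-2[\al f'(0-)+\be f'(0+)]$ while $(\wfl)''(0-)=f''(0-)$, and symmetrically $(\wfr)''(0-)=-f''(0+)+2[\al f'(0-)+\be f'(0+)]$ while $(\wfr)''(0+)=f''(0+)$. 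Equating the one-sided second derivatives of $\wfr$ yields $f''(0+)=\al f'(0-)+\be f'(0+)$, equating those of $\wfl$ yields $f''(0-)=-[\al f'(0-)+\be f'(0+)]$, and together these two relations are equivalent to condition (c), namely $f''(0+)=\al f'(0-)+\be f'(0+)$ together with $f''(0+)=-f''(0-)$. This establishes $D(G)=D(A^\perp_{\al,\be})$.

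Claim (ii) is then immediate, just as in Step 2 of Theorem \ref{ab:genth}: for $f$ in the common domain $E^\perp_{\al,\be}f=(\wfl,\wfr)$ is a pair of twice continuously differentiable functions in $\cer$, so $\lim_{t\to0}2t^{-2}(\ced(t)E^\perp_{\al,\be}f-E^\perp_{\al,\be}f)=((\wfl)'',(\wfr)'')$, and applying the restriction $R$ gives $Gf=R((\wfl)'',(\wfr)'')=f''=A^\perp_{\al,\be}f$. The main obstacle is the second-derivative computation at the origin in claim (i): it is mechanical but delicate, since one must differentiate the convolution terms twice and verify carefully that the $f(0\pm)$ boundary contributions cancel under the constraint $f\in\xo$ before the transmission conditions of (c) emerge.
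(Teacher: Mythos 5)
Your proposal is correct and follows exactly the route the paper itself takes: the paper's proof of Theorem \ref{gen_th} is only a sketch stating that the argument parallels Theorem \ref{ab:genth}, with the crucial step being that condition (c) is necessary and sufficient for twice differentiability of the extensions $\wfl,\wfr$ at $x=0$. Your explicit one-sided derivative computations (the automatic matching of first derivatives due to $f(0-)=-f(0+)$, and the identities $(\wfl)''(0+)=-f''(0-)-2[\al f'(0-)+\be f'(0+)]$, $(\wfr)''(0-)=-f''(0+)+2[\al f'(0-)+\be f'(0+)]$) check out and in fact supply the details the paper omits.
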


\begin{proof} 

This result may be proved in much the same way as Theorem \ref{ab:genth}.
The crucial step of the argument is to show that, for $f\in \x$, the
condition $f\in D(A^\perp_{\al,\be})$ holds iff extensions $\wfl, \wfr$ defined in Proposition \ref{lem:5.3} are 
twice continuously differentiable with  $\wfl''$, $\wfr''$ belonging to $\cer$.  Transmission condition of point  (c) above is a necessary and sufficient condition for differentiability of extensions at $x=0$. \end{proof}

\section{Approximation of skew Brownian Motion: convergence of solution families}\label{aosb} 

\subsection{A convergence theorem}
In this section, we give additional insight into the theorem obtained in \cite{tombatty} and repeated in \cite{knigaz}*{Chapter 11}, saying that, as the permeability coefficients converge to infinity in such a way that their ratio remains constant, the solution families of snapping-out Brownian motions converge to those of the skew Brownian motion.

The convergence result alluded to above says that each $\aab$ is a Feller generator (i.e., the generator of a strongly continuous family of positive contraction operators in $\x$) and that  
formula \eqref{intro:cosik} holds. To repeat, this formula says that for $s>0$, 
\begin{equation}\label{skew:dodane} \gran \sup_{t\in [0,s]} \|\e^{t \anab}f - \e^{t\askew}f\| = 0 \end{equation}
for all $f \in \cer \subset \x$. Here, 
$\askew $ is the following Feller generator in $\cer$.  An $f\in \cer$ belongs to its domain
if the following three
conditions are satisfied: 
\begin{itemize}
\item[(a)] $f$ is twice continuously differentiable in both
  $(-\infty,0]$ and $[0,\infty)$, separately, with left-hand and
  right-hand derivatives at $x=0$, respectively, 
\item[(b)] both the limits $\lim_{x\to \infty} f'' (x)$ and
  $\lim_{x\to -\infty} f''(x) $ exist and are finite {(it follows that, in fact, they
  have to be equal to $0$)}, and
 \item[(c)] $\alpha f'(0+) = \beta f'(0-)$ and $f''(0+)=f''(0-).$  Note that this condition implies that, although $f'(0)$ need not exist, it is meaningful to speak of $f''(0)$. \end{itemize}
 Furthermore, 
 \[ \askew f = f''.\]

Turning to the particulars of   \eqref{skew:dodane}, we recall that the main idea of the Trotter--Kato--Neveu convergence theorem \cite{abhn,engel,goldstein,pazy}, a cornerstone of the theory of convergence of semigroups \cite{knigaz,bobrud}, is that convergence of resolvents of equibounded semigroups in a Banach space $\mathsf X$, gives an insight into convergence of the semigroups themselves. Hence, in studying the limit of semigroups, say, $\{\e^{tB_n}, t \ge 0\}, n\ge 1$, generated by the operators $B_n, n \ge 1$ we should first establish existence of 
the strong limit 
\[ \rla \coloneqq \gran \rez{B_n}. \]
The general theory of convergence (see \cite{knigaz}*{Chapter 8}) covers also the case in which, unlike in the  classical version of the Trotter--Kato--Neveu theorem, the (common) range of the so-obtained operators $\rla, \lam >0$ is \emph{not} dense in  $\x$, and stresses the role of  the so-called regularity space, defined as the closure of the range of $\rla$: 
\begin{equation}\label{skew:reg} \xreg \coloneqq cl (Range \rla) \subset \mathsf X. \end{equation}
Namely, $\xreg$ turns out to be composed of $f\in \x$ such the limit $T(t)f \coloneqq \gran \e^{tB_n}f $ exists and is uniform with respect to $t$ in compact subintervals of $[0,\infty)$; then $\{T(t),\, t \ge 0\}$ is a strongly continuous semigroup in $\xreg$.

Condition \eqref{skew:dodane} is thus a  typical result of convergence theory: it characterizes the regularity space for $B_n\coloneqq \anab$ as equal to $\cer$, and identifies $\{T(t),\, t\ge 0\}$ as $\{\e^{\askew}, t \ge 0\}$.

It should be stressed, though, that this result does not exclude the possibility of existence of $f\not \in \cer$ such that $\jcg{\e^{t\anab}f}$ converges for all $t>0$. Such \emph{irregular} convergence of semigroups, which is known to be always uniform with respect to $t$ in compact subsets of $(0,\infty)$ --- see \cite{note} or \cite{knigaz}*{Thm 28.4} --- is not so uncommon, especially in the context of singular perturbations \cite{banmika,banalachokniga,knigaz}. We will prove that the limit $\gran \e^{t\anab}f$ exists for all $t>0$ and $f\in \x$.

Moreover, \eqref{skew:dodane} says nothing about convergence of the related cosine families. In fact, the analysis presented in \cite{tombatty} does not even guarantee that these cosine families are equibounded (see Remark 6.1 in that paper). We are able to improve \eqref{skew:dodane} as follows. 

\begin{thm}\label{thm:skew} \ \ 
\begin{itemize}
\item [(a)] The cosine families $\{ \cosab (t), \,t \in \R\}$ generated by $\aab$ are equibounded: 
\begin{equation} \|\cosab (t)\| \le 5 \qquad \alpha,\beta\ge 0, t \in \R.\label{skew:2}\end{equation}
\item [(b)] We have 
\begin{equation}\label{skew:3} \gran \sup_{t \ge 0} \| \cosabn (t)f - \cskew (t) f\| =0 \qquad  f \in \cer, \end{equation}
 where $\{\cskew (t), \, t \in \R\}$ is the cosine family generated by $\askew$.  If $f$ does not belong to $\cer$, this limit does not exist for at least one  $t\in \R \setminus \{0\}$. \newline 
\item [(c)] The limit
\begin{equation}\label{skew:4} \gran \e^{t\anab} f \end{equation}
exists for all $t>0$ and $f\in \x,$ and for $f\not \in \cer $ it is uniform with respect to $t $ in compact subintervals of $(0,\infty)$. \end{itemize} 
\end{thm}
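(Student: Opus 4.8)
The plan is to exploit, throughout, the abstract Kelvin formula \eqref{ab:6} together with the elementary bounds $\|R\|\le 1$ and $\|\ced(t)\|\le 1$, the latter because each $C(t)$ is an average and hence a contraction in the supremum norm. Part (a) is then immediate: from $\cosab(t)=R\ced(t)\eab$ and \eqref{ab:5a} one gets $\|\cosab(t)\|\le \|R\|\,\|\ced(t)\|\,\|\eab\|\le 5$, uniformly in $t$ and in $\alpha,\beta\ge 0$.

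For part (b) I would first identify the limit. The crucial observation is that, writing $g\coloneqq f-f^\es$, the nonlocal terms in the extension \eqref{proj:1} with $\alpha,\beta$ replaced by $n\alpha,n\beta$ are $\tfrac{2\al}{\su}\,[n\su\, e_{n\su}*g]$ and $\tfrac{2\be}{\su}\,[n\su\, e_{n\su}*g]$, and that $n\su\, e_{n\su}$ is an approximate identity on $[0,\infty)$. When $f\in\cer$ one has $g(0+)=f(0+)-f(0-)=0$, so $g$ is a genuinely continuous member of $\cer$ vanishing at the origin; a standard approximate-identity estimate (substitute $u=n\su(x-y)$ and split $\int_0^{n\su x}e^{-u}[g(x-u/(n\su))-g(x)]\,\mathrm du$ at $u=n\su\delta$, using uniform continuity of $g$ and $g\in\cer$ to control the tail and the boundary term $g(x)e^{-n\su x}$) then shows $n\su\, e_{n\su}*g\to g$ uniformly on $[0,\infty)$. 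Consequently $\enab f$ converges in the norm of $\ce$ to the pair $E^{\textnormal{skew}}_{\al,\be}f$ obtained by letting $n\to\infty$ in \eqref{proj:1}, and since $\cosabn(t)f-R\ced(t)E^{\textnormal{skew}}_{\al,\be}f=R\ced(t)(\enab f-E^{\textnormal{skew}}_{\al,\be}f)$ we obtain
\[ \sup_{t\in\R}\|\cosabn(t)f-R\ced(t)E^{\textnormal{skew}}_{\al,\be}f\|\le \|\enab f-E^{\textnormal{skew}}_{\al,\be}f\|_{\ce}\longrightarrow 0. \]
It remains to recognise $R\ced(t)E^{\textnormal{skew}}_{\al,\be}$ as $\cskew(t)$. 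Rather than redoing a generation theorem, I would argue that $\{R\ced(t)E^{\textnormal{skew}}_{\al,\be},\,t\in\R\}$ is a strongly continuous cosine family on $\cer$ (the restriction of $\ced$ to the invariant subspace $E^{\textnormal{skew}}_{\al,\be}(\cer)$, transported by the isomorphism, exactly as in Proposition \ref{propek1}), whose subordinated semigroup, by the Weierstrass formula, is the limit of the subordinated semigroups $\e^{t\anab}$; by the already-known convergence \eqref{skew:dodane} this limit is $\e^{t\askew}$, so the two cosine families share the generator $\askew$ and therefore coincide.

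For the negative assertion in (b), suppose $f\notin\cer$, so that $g(0+)\neq0$; since $\su>0$ we may assume $\be>0$ (otherwise work with $\wfl$ and $\alpha>0$). For each fixed $s>0$ the value $\wfr(-s)=f(s)-2n\be\,e_{n\su}*g(s)$ converges to $f(s)-\tfrac{2\be}{\su}g(s)$, whose limit as $s\to0+$ is $f(0+)-\tfrac{2\be}{\su}g(0+)$, whereas $\wfr(0)=f(0+)$; hence the pointwise limit of $\enab f$ has a jump of size $\tfrac{2\be}{\su}g(0+)\neq0$ at the origin and $\enab f$ cannot converge in $\ce$. Fixing $t_0\neq0$ and evaluating $\cosabn(t_0)f=R\ced(t_0)\enab f$, one sees that $\cosabn(t_0)f$ converges pointwise to $C(t_0)$ applied to this discontinuous limit, which carries a jump of half that size at the \emph{interior} points $\pm t_0$. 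Since a uniform limit of functions in $\x$ cannot be discontinuous at an interior point, $\lim_n\cosabn(t_0)f$ does not exist.

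Finally, for part (c) I would pass to resolvents and invoke the theory of irregular convergence. The semigroups $\e^{t\anab}$ are Feller, hence equibounded by $1$, so by the convergence theorem cited after \eqref{skew:reg} (see \cite{note} or \cite{knigaz}*{Thm. 28.4}) it suffices to prove that $\gran\rez{\anab}f$ exists for every $f\in\x$ and $\lam>0$. This is a finite-dimensional matching computation: solving $\lam u-u''=f$ on each half-line and determining the two free constants from the transmission conditions \eqref{intro:bc} (with $n\alpha,n\be$) gives the resolvent explicitly; as $n\to\infty$ the conditions $u'(0-)=n\alpha(u(0+)-u(0-))$ and $u'(0+)=n\be(u(0+)-u(0-))$ force $u(0+)-u(0-)\to0$ and $\be u'(0-)=\alpha u'(0+)$ in the limit, and the free constants converge continuously, so $\rez{\anab}f$ converges for every $f$. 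The cited theorem then yields existence of $\gran\e^{t\anab}f$ for all $t>0$ and $f\in\x$, uniform on compact subintervals of $(0,\infty)$; the regularity space is the closure of the common range of the limit resolvents, which by \eqref{skew:dodane} is exactly $\cer$, so for $f\notin\cer$ the convergence is only of this irregular, punctured-interval type. I expect the main obstacle to be the identification step in (b)---pinning the limit cosine family to $\cskew$ and making the jump/discontinuity argument for $f\notin\cer$ fully rigorous---rather than the resolvent computation in (c), which is routine.
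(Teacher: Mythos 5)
Parts (a) and (b) of your proposal are sound and largely coincide with the paper's own argument. Part (a) is identical (the Kelvin formula \eqref{ab:6}, $\|R\|\le 1$, $\|\ced(t)\|\le 1$ and \eqref{ab:5a}). In (b), your approximate-identity computation is exactly the paper's Proposition \ref{skew:ext_prop} (proved there via Lemma \ref{lem:dirnew}), and your identification of the limit family through subordinated semigroups and \eqref{skew:dodane} is a legitimate variant of the paper's second method (the paper also offers a shortcut: by \cite{abtk}, the identity $\cskew(t)=R\ced(t)E^{\textnormal{skew}}_{\al,\be}$, i.e.\ \eqref{abskew}, is already known). For the negative half of (b) you argue directly: the pointwise limit of $\enab f$ has a jump of size $\frac{2\be}{\su}\,(f(0+)-f(0-))$ at the origin, which $C(t_0)$ transports to the interior point $t_0$, contradicting continuity of any would-be uniform limit. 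The paper instead cites the general theorem of \cite{zwojtkiem} that cosine families cannot converge outside the regularity space. Your route is more elementary and self-contained, and in fact shows failure at \emph{every} $t_0\neq 0$; it is correct, including your reduction to the first coordinate when $\be=0$ (in which case $\al>0$).

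Part (c), however, contains a genuine gap. You assert that, since the semigroups are equibounded, \enquote{it suffices to prove that $\gran \rez{\anab}f$ exists for every $f\in\x$ and $\lam>0$,} citing \cite{note} and \cite{knigaz}*{Thm 28.4}. That theorem does not say this: it says that \emph{if} the limit $\gran \e^{t\anab}f$ exists for every $t>0$, \emph{then} the convergence is automatically uniform in $t$ on compact subsets of $(0,\infty)$. It does not manufacture existence of the limit out of resolvent convergence, and no such implication holds for general equibounded semigroups: take $B_n=in$ on $\C$; then $\rez{B_n}\to 0$ strongly, yet $\e^{tB_n}=\e^{int}$ diverges for every $t>0$ outside $2\pi\Z$. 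Resolvent convergence governs only what happens on the regularity space, which here is $\cer\subsetneq\x$, so your reduction collapses precisely on the set of $f$ that part (c) is about. The paper closes this gap with structure specific to cosine families: by the analogue of Kisy\'nski's theorem proved in \cite{odessa}, strong convergence of $\rez{\anab}$ (known from \cite{tombatty}) together with the equiboundedness \eqref{skew:2} implies strong convergence of $\int_{-\infty}^{\infty}\Psi(s)\cosabn(s)\,\mathrm{d}s$ for every even, absolutely integrable $\Psi$; since the Weierstrass formula exhibits $\e^{t\anab}f$ as exactly such an integral with the Gaussian weight $\Psi_t(s)=\frac{1}{2\sqrt{\pi t}}\e^{-s^2/(4t)}$, the limit \eqref{skew:4} exists for all $f\in\x$ and $t>0$; only \emph{then} does the theorem you cite yield uniformity on compact subintervals of $(0,\infty)$. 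Without this cosine-specific smoothing step (or a substitute for it), your part (c) does not go through, even though your explicit resolvent computation itself is fine.
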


\begin{rem}\label{rem:one} Thesis (b) and the Weierstrass formula (see e.g. \cite{abhn}*{p. 219}) imply 
that the limit in  \eqref{skew:dodane} is in fact uniform in $t\ge 0$. To see this, let $f\in \cer $ and $\eps >0$ be fixed and let $n_0$ be so large that 
\[ \| \cosabn (s) f - \cskew  (s)f \|\le  \eps \]
for $n\ge n_0$ and $s\in \R$. Then, for any $t>0$,
\begin{align*} \|\e^{t\anab} f - \e^{t\askew} f\|&\le  {\textstyle \frac 1{2\sqrt{\pi t}}} \int_{-\infty}^\infty \e^{-\frac {s^2}{4t}} \|\cosabn (s) f  -  \cskew f \| \ud s \\
 &=  {\textstyle \frac 1{2\sqrt{\pi t}}} \int_{-\infty}^\infty \e^{-\frac {s^2}{4t}} \eps \ud s =\eps, 
 \end{align*} 
(and for $t=0$ the left hand side is $0$). In particular, (b) improves \eqref{skew:dodane}.
\end{rem}

Interestingly, the approximation theorem  presented above is accompanied by its counterpart devoted to complementary transmission conditions, that is, to cosine families $\{\cosabt,\, t\in \R\}$ described in Theorem \ref{gen_th}. As it turns out, these families converge on the entire $\xo$, and, yet more surprisingly, the limit cosine family is a mirror image of that related to the skew Brownian motion. A precise statement is  contained in the following theorem.

\begin{thm}\label{thm:weks} Let $ J$ be the isometric isomorphism of  $\xo$ and $\cer$ given by $J f (x)= -f(x), x<0$, $Jf(x)=f(x), x >0$ and $Jf(0)= f(0+).$ Then, 
\begin{itemize}
\item [(a)] The cosine families $\{ \cosabt (t), \,t \in \R\}$ generated by $A_{\al,\be}^\perp$ are equibounded: 
\begin{equation} \|\cosabt (t)\| \le 5 \qquad \alpha,\beta\ge 0, t \in \R.\label{weks:2}\end{equation}
\item [(b)] We have 
\begin{equation}\label{weks:3} \gran \sup_{t \in \R} \| \cosabnt (t)f - J^{-1} \cskewt (t)J f\| =0 \qquad  f \in \xo, \end{equation}
\item [(c)] Finally, 
\begin{equation*} \gran \sup_{t \ge 0} \| \e^{tA_{n\al,n\be}^\perp}  - J^{-1} \e^{tA_{\be,\al}^\perp} J f\| =0 \qquad  f \in \xo. \end{equation*}
\end{itemize} 
\end{thm}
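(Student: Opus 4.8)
The plan is to treat the three assertions in turn, deriving (c) from (b) and (b) from a convergence statement for the extension operators $E^\perp_{n\al,n\be}$ of Proposition \ref{lem:5.3}. Assertion (a) I would dispatch immediately from the Kelvin representation \eqref{gener:B}: since $R$ and each basic cosine operator $C(t)$ are contractions for the supremum norm, $\|\ced(t)\|\le 1$ for all $t$, and the bound \eqref{piatka} holds with the \emph{same} constant $5$ for every pair of non-negative coefficients (in particular for $n\al,n\be$); hence $\|\cosabt(t)\|\le 5$ and $\|\cosabnt(t)\|\le 5$ uniformly in $t$ and $n$. For (b) the whole point is that $\cosabnt(t)=R\ced(t)E^\perp_{n\al,n\be}$ with $R\ced(t)$ a contraction uniformly in $t$, so it is enough to prove that $E^\perp_{n\al,n\be}f$ converges in $\ce$, uniformly, to an explicit limit $\mathcal E f$; then $\|\cosabnt(t)f-R\ced(t)\mathcal E f\|\le\|E^\perp_{n\al,n\be}f-\mathcal E f\|$ forces convergence uniform in $t\in\R$.

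The core computation I would carry out is the limit of $E^\perp_{n\al,n\be}f$ for fixed $f\in\xo$. Setting $h:=\be f-\al f^\es\in\cer$ and replacing $\al,\be$ by $n\al,n\be$ in \eqref{gener:ex}, the left component reads $\wfl(x)=-f^\es(x)-2f(0+)e_{n(\al+\be)}(x)-2n\,e_{n(\al+\be)}*h(x)$ for $x>0$, while off the origin its pointwise limit is $\wfl^\infty(x)=-f^\es(x)-\tfrac{2}{\al+\be}h(x)$, with the analogous formula for $\wfr^\infty$; on the negative (resp.\ positive) half-axis $\wfl$ (resp.\ $\wfr$) equals $f$ and does not move with $n$. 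The whole discrepancy is the boundary-layer term
\[ E_n(x)=-2f(0+)e_{n(\al+\be)}(x)-2n\,e_{n(\al+\be)}*h(x)+\tfrac{2}{\al+\be}h(x),\qquad x>0. \]
Here is the crux: the defining relation $f(0-)=-f(0+)$ of $\xo$ yields $h(0+)=(\al+\be)f(0+)$, so the two exponential layers cancel at leading order; estimating what is left as an approximate identity applied to the increments of $h$ and invoking the uniform continuity of $h$ (valid since $h\in\cer$) gives $\sup_{x>0}|E_n(x)|\to 0$. Together with Lemma \ref{lopital} for the behaviour at $\pm\infty$, this shows $E^\perp_{n\al,n\be}f\to\mathcal E f$ uniformly, with $\mathcal E f=(\wfl^\infty,\wfr^\infty)\in\ce$, and hence $\cosabnt(t)f\to U(t)f:=R\ced(t)\mathcal E f$ uniformly in $t\in\R$ for \emph{every} $f\in\xo$. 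It is exactly this cancellation, available on all of $\xo$ rather than on a proper regularity subspace, that explains why convergence holds on the entire space $\xo$, unlike in Theorem \ref{thm:skew}(b).

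It then remains to identify $\{U(t)\}$. As a uniform-in-$t$ limit of the equibounded cosine families $\{\cosabnt(t)\}$, the family $\{U(t),\,t\in\R\}$ is itself a strongly continuous, equibounded cosine family on $\xo$; since a cosine family is determined by its generator, I would compute the generator $G$ of $\{U(t)\}$ and match it with $J^{-1}A^{\textnormal{skew}}_{\be,\al}J$. Proceeding exactly as in Theorems \ref{ab:genth} and \ref{gen_th}, $f\in D(G)$ iff $\mathcal E f$ is twice continuously differentiable on $\R$ with second derivatives in $\cer$, and then $Gf=R((\wfl^\infty)'',(\wfr^\infty)'')=f''$. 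Matching the one-sided first and second derivatives of $\wfl^\infty$ (equivalently of $\wfr^\infty$) at $0$ produces precisely the transmission conditions $\be f'(0+)=-\al f'(0-)$ and $f''(0+)=-f''(0-)$; a short computation with $g=Jf$ shows these to be exactly the conditions defining $D(J^{-1}A^{\textnormal{skew}}_{\be,\al}J)$, on which that operator likewise acts by $f\mapsto f''$. Hence $G=J^{-1}A^{\textnormal{skew}}_{\be,\al}J$ and $U(t)=J^{-1}\cskewt(t)J$, which is \eqref{weks:3}.

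Finally, (c) I would obtain from (b) by Weierstrass subordination, just as in Remark \ref{rem:one}: expressing each semigroup as the Gaussian average of the corresponding cosine family, using $\sup_{s\in\R}\|\cosabnt(s)f-J^{-1}\cskewt(s)Jf\|\to 0$ and $\int_{-\infty}^\infty\tfrac{1}{2\sqrt{\pi t}}\e^{-s^2/(4t)}\,\ud s=1$, and noting that the bounded operator $J$ commutes with the subordination integral, one gets $\sup_{t\ge 0}\|\e^{tA^\perp_{n\al,n\be}}f-J^{-1}\e^{tA^{\textnormal{skew}}_{\be,\al}}Jf\|\to 0$ for all $f\in\xo$ (the limit being the semigroup generated by the common limit $J^{-1}A^{\textnormal{skew}}_{\be,\al}J$ of the $A^\perp_{n\al,n\be}$). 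I expect the main obstacle to be Part (b): securing the \emph{uniform} convergence of the extensions via the boundary-layer cancellation that hinges on membership in $\xo$, and then correctly identifying the limiting generator as the $J$-conjugate of the swapped skew generator.
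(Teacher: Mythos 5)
Your proposal is correct, and most of its skeleton coincides with the paper's proof: part (a) from the Kelvin formula \eqref{gener:B} together with the uniform bound \eqref{piatka}, part (c) from part (b) by Weierstrass subordination exactly as in Remark \ref{rem:one}, and the first half of part (b) — uniform convergence of $E^\perp_{n\al,n\be}f$ — is precisely the paper's Proposition \ref{weks:prop}; the ``boundary-layer cancellation'' you describe, hinging on the fact that $\be f-\al f^\es$ takes the value $(\al+\be)f(0+)$ at $0+$ when $f\in\xo$, is exactly the content of Lemma \ref{lem:dirnew}(c) applied to $\phi=\be f-\al f^\es$ (you re-derive the Dirac-sequence estimate rather than cite it), and your explicit limit $\mathcal{E}f$ agrees with $E^{\textnormal{weks}}_{\al,\be}f$ of \eqref{weks:1.1}--\eqref{weks:1.2}. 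Where you genuinely diverge is the identification of the limit family. The paper does this algebraically: introducing the involution $\mc J(f_1,f_2)=(-f_1,f_2)$, it reduces the identity $R\,\ced(t)E^{\textnormal{weks}}_{\al,\be}=J^{-1}\cskewt(t)J$ to three elementary identities — $J^{-1}R\mc J=R$ on pairs with $f_1(0)=-f_2(0)$, $\mc J\ced(t)\mc J=\ced(t)$, and $\mc J E^{\textnormal{skew}}_{\be,\al}J=E^{\textnormal{weks}}_{\al,\be}$ — leaning on the Kelvin formula \eqref{abskew} for the skew cosine family imported from \cite{abtk}. You instead compute the generator of the limit cosine family and match transmission conditions in the style of Theorems \ref{ab:genth} and \ref{gen_th}, then invoke uniqueness of the cosine family generated by a given operator; this is close in spirit to the ``second method'' the paper uses for Theorem \ref{thm:skew}(b), and your domain computation does land on the correct conditions $\be f'(0+)=-\al f'(0-)$, $f''(0+)=-f''(0-)$ of Remark \ref{weks:gen}. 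Your route is sound and more self-contained (it does not need \eqref{abskew}), but it is longer in the details you gloss over: before the generator analysis ``exactly as in Theorem \ref{ab:genth}'' is licit, you must verify that the range of $\mathcal E$ is invariant under $\{\ced(t),\,t\in\R\}$ and that $\mathcal E$ is an isomorphism onto this range with inverse $R$ — structural facts which are easy (the range is cut out by conditions on even and odd parts, which commute with $C(t)$ by \eqref{tran}), but which the paper only records later, in Theorem \ref{conv_proj2}. In short, the paper's conjugation trick buys brevity by recycling \eqref{abskew}; your generator-matching buys independence from that external reference at the cost of redoing the domain analysis.
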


\begin{rem}\label{weks:gen}
The limit cosine family  $\{J^{-1} \cskewt (t)J, \, t\in \R\}$ is generated by the isomorphic image of the generator $A_{\be,\al}^{\textnormal{skew}}$ in $\xo$, that is, the operator  
$\aweks$ defined by $\aweks f = f'',$
on the domain composed of $f\in \xo$  satisfying the following conditions: 
\begin{itemize}
\item[(a)] $f$ is twice continuously differentiable in both
  $(-\infty,0)$ and $(0,\infty)$, separately, with left-hand and
  right-hand derivatives at $x=0$, respectively, 
\item[(b)] both the limits $\lim_{x\to \infty} f'' (x)$ and
  $\lim_{x\to -\infty} f''(x) $ exist and are finite,
 \item[(c)] $\be f'(0+) = -\al f'(0-)$ and $f''(0+)=-f''(0-).$   \end{itemize}
\end{rem}


\subsection{Proof of Theorem \ref{thm:skew} (a) and (b)}
By definition \eqref{ab:6}, estimate \eqref{skew:2} is a direct consequence of \eqref{ab:5a} because the operator norms of $R$ and $\ced(t)$ are equal to $1.$ 

Turning to the proof of (b), we note that in the defining formula \eqref{ab:6}, $E_{\al,\be}$ (introduced in Proposition \ref{propek1}) is the only operator that depends on $\alpha$ or $\beta$.  More importantly, to prove existence of the limit $\gran \cosabn (t) f $  for $f\in \cer$, it suffices to prove existence of $\gran \enab f$. As we will prove now, however, the latter is an immediate consequence of the fact that distributions of exponential random variables with large parameters converge to Dirac measure at $0$, as expressed in Lemma \ref{lem:dirnew} presented in Appendix.

\begin{prop}\label{skew:ext_prop}
We have
\begin{equation}\label{skew:5} \gran \enab f = E_{\alpha,\beta}^{\textnormal{skew}} f, \qquad f \in \cer \end{equation}
where $E_{\alpha,\beta}^\textnormal{skew} f = (\widetilde {f_{\ell}},\widetilde {f_{\textnormal r}})$ is defined by
\begin{align}\label{skew:4.1}
 \widetilde {f_\ell}(x)&= \begin{cases*}f(x),& {for } $x\leq  0$, \\ 
\frac{\be-\al}{\apb} f(-x)+\frac{2\al}{\apb} f(x), & {for } $x> 0,$  \end{cases*}
 \end{align}
and
\begin{align}\label{skew:4.2}
  \widetilde {f_{\textnormal r}}(x)&= \begin{cases*} \frac{2\be}{\apb} f(x)+\frac{\al-\be}{\apb} f(-x) , & {for } $x< 0$,\\
f(x),& {for } $x\geq  0$.  \end{cases*} \end{align}  
\end{prop}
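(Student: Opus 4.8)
The plan is to read $\enab$ off directly from the explicit extension formula \eqref{proj:1}, with $\al,\be$ (and hence $\su$) replaced throughout by $n\al,n\be$ (and hence $n\su$), and then to track which terms actually depend on $n$. On the ``own'' half-lines the extensions are untouched: for every $n$ one has $\wfl(y)=f(y)$ for $y\le 0$ and $\wfr(y)=f(y)$ for $y\ge 0$, and these branches already coincide with the corresponding parts of $E_{\alpha,\beta}^{\textnormal{skew}}f$ in \eqref{skew:4.1}--\eqref{skew:4.2}. Hence the whole content of \eqref{skew:5} reduces to the convergence of the two cross-extension terms: the rightward extension $2n\al\, e_{n\su}*[f-f^\es]$ of the left part, and the leftward extension $2n\be\, e_{n\su}*[f-f^\es]$ of the right part, both evaluated on $\R^+$.

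The key step is the rescaling
\[ 2n\al\, e_{n\su}*[f-f^\es]=\textstyle\frac{2\al}{\su}\,\big(n\su\, e_{n\su}\big)*[f-f^\es], \]
in which, on $\R^+$, the kernel $n\su\, e_{n\su}$ is the density of an exponential random variable with parameter $n\su\to\infty$. I would then invoke Lemma \ref{lem:dirnew} to conclude that $n\su\, e_{n\su}*g\to g$ uniformly on $\R^+$ for $g\coloneqq f-f^\es$. The hypothesis that makes this uniform approximate-identity statement valid is $g(0)=0$, which holds precisely because $f\in\cer$ is continuous at the origin, so that $f(0)=f^\es(0)$. It follows that $2n\al\, e_{n\su}*[f-f^\es]\to\frac{2\al}{\su}[f-f^\es]$ and $2n\be\, e_{n\su}*[f-f^\es]\to\frac{2\be}{\su}[f-f^\es]$, uniformly on $\R^+$.

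It remains to substitute and simplify. For $x>0$, the term $\wfl(x)=f(-x)+2n\al\, e_{n\su}*[f-f^\es](x)$ converges to $f(-x)+\frac{2\al}{\su}[f(x)-f(-x)]$; using $1-\frac{2\al}{\su}=\frac{\be-\al}{\su}$ this equals $\frac{\be-\al}{\su}f(-x)+\frac{2\al}{\su}f(x)$, exactly the branch in \eqref{skew:4.1}. The analogous computation for $\wfr(-x)$, $x>0$, using $1-\frac{2\be}{\su}=\frac{\al-\be}{\su}$ and writing the limit as a function of the negative point $-x$, yields \eqref{skew:4.2}. Since the ``own'' branches are $n$-independent and the cross terms converge uniformly on $\R^+$, the difference $\enab f-E_{\alpha,\beta}^{\textnormal{skew}}f$ tends to $0$ in the norm of $\ce$, which is \eqref{skew:5}.

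The step I expect to be the crux is the \emph{uniform} (supremum-norm) convergence of the rescaled convolution, as opposed to merely pointwise or locally uniform convergence. A boundary layer would otherwise form at $x=0$, where $e_{n\su}*g$ vanishes identically for every $n$ while $g(0)$ need not; it is the restriction to $f\in\cer$, which forces $g(0)=0$, that removes this obstruction and lets Lemma \ref{lem:dirnew} apply up to the origin. For completeness one also records, via Lemma \ref{lopital}, that the limits of $\wfl$ and $\wfr$ at $\pm\infty$ are already independent of $n$ and agree with those of the stated skew extension, so that no difficulty arises at infinity either.
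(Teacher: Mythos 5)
Your proof is correct and takes essentially the same route as the paper's: both read $\enab$ off the explicit extension formula \eqref{proj:1} with parameters $n\al,n\be$, note that the \enquote{own} branches are unchanged, and reduce the claim to uniform convergence on $\R^+$ of the rescaled convolution via Lemma \ref{lem:dirnew}\,(c), using exactly the fact that $f\in\cer$ forces $(f-f^\es)(0)=0$ so that no boundary layer forms at the origin. The algebraic simplification to \eqref{skew:4.1}--\eqref{skew:4.2} likewise matches the paper's argument.
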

\begin{proof}Lemma \ref{lem:dirnew} (c)  implies that for $f\in \cer$, the integral found in the second line of formula \eqref{proj:1} with  $\alpha$ and $\beta$ replaced by $n\alpha$ and $n\beta$, respectively,  converges, as $n\to \infty$, to $(\alpha+\beta)^{-1} (f(x) - f(-x))$ uniformly in $x\ge 0.$ This is because, for $f\in \cer$,  the function $\phi$ defined by $\phi (x) = f(x) - f(-x), x\ge 0$ belongs to $\cerp$, and we have $\phi (0)=0$.

 It follows that the first coordinate of $ \enab f $  converges (in the norm of $\cer$) to $\widetilde {f_{\ell}}$ defined by  \eqref{skew:4.1}.
Similarly, its second coordinate converges to $ \widetilde {f_{\textnormal r}}$ from \eqref{skew:4.2}.
 \end{proof}

There are now at least two natural ways to prove \eqref{skew:3}. The first one, a bit more direct, is to note that we have just established  that 
\[ \gran \cosabn (t) f = R\ced (t)  E_{\al,\be}^\textnormal{skew} f, \qquad t \in \R, f \in \cer . \]
On the other hand, in \cite{abtk}*{Section 6}, Lord Kelvin's method of images with the same extension operator $ E_{\alpha,\beta}^\textnormal{skew} $ has been used to prove a generation theorem for $\askew$. In other words, it was proved there that 
\begin{equation}\label{abskew} \cskew (t) = R\ced (t)  E_{\al,\be}^\textnormal{skew}, \qquad t \in \R. \end{equation}  
This immediately renders \eqref{skew:3}. 

To present the second method, we introduce 
\[\cosik (t) f \coloneqq \gran \cosabn (t)f \]
and note that, as a result of \eqref{skew:5}, the above limit is uniform with respect to $t$ in the entire $\R$.  
Hence, $\{\cosik (t),\, t \in \R\}$ is a strongly continuous family of operators, and it is a cosine family because so are $\{\cosabn (t),\, t \in \R\}, n \ge 1.$ Moreover, by \eqref{skew:3}, $\|\cosik (t) \|\le 5.$

Let $G$ be the generator of $\{\cosik (t),\, t \in \R\}$. Then, by the Lebesgue Dominated Convergence Theorem, for $\lam >0$ and $f \in \cer $, 
\begin{align*} 
\lam (\lam^2 - G)^{-1} f &= \int_0^\infty \e^{-\lam t} \cosik (t)  f\ud t = \gran  \int_0^\infty \e^{-\lam t} \cosabn (t)  f\ud t \\& = \gran \lam (\lam^2 - \anab )^{-1} f = \lam (\lam^2-\askew)^{-1} f, \end{align*}
with the last equality following by \eqref{skew:dodane}. Hence, we conclude that the resolvents of operator cosine functions generators $G$ and $\askew$ coincide, and this implies that $G=\askew$, proving \eqref{skew:3}. 

The rest of Theorem  \ref{thm:skew} (b) is a particular case of the general theorem proved in \cite{zwojtkiem} (see also \cite{knigaz}*{Chapter 61}) saying that outside of the regularity space (i.e., outside of the subspace defined in \eqref{skew:reg} --- the regularity space of a sequence of cosine families is by definition the regularity space of the sequence of corresponding semigroups) cosine families cannot converge. 
Since in the case of cosine families $\{\cosabn (t),\, t \in \R\}, n \ge 1$ the regularity space equals $\cer$, 
outside of $\cer$ there is no $f$ such that the limit $\gran \cosabn (t)f $ exists for all $t\in \R$.   

\subsection{Proof of Theorem  \ref{thm:skew} (c)}
We start by recalling that, as an application of his algebraic version of the Hille--Yosida Theorem, J. Kisy\'nski has proved the following result accompanying Trotter--Kato--Neveu Theorem (see Corollary 5.8 in \cite{prof10}): if $B_n, n \ge 1$ are generators of equibounded semigroups, then existence of the strong limit 
\[ \gran \rez{B_n}  \]
is equivalent to existence of 
\[ \gran \int_0^\infty \Psi (t) \e^{tB_n} \ud t \]
for any $\Psi$ that is absolutely integrable on $\R^+$ (see also \cite{hn}; the fact that existence of the first limit implies existence of the second for $\Psi = 1_{[0,\tau]}$ with $\tau>0$, has been noted already in the 1970 paper of T. G. Kurtz \cite{kurtz2}). 

An analogue of this result for cosine families was found in \cite{odessa}.  In our case it says that existence of the strong limit $\gran \rez{\anab}$ (which is established in \cite{tombatty}) implies existence of  the strong limit 
\begin{equation}\label{skew:6} \gran \int_{-\infty}^\infty \Psi (s) \cosabn (s) \ud s \end{equation}
for any $\Psi$ that is absolutely integrable on the entire $\R$ and even (as long as we have estimate \eqref{skew:2}). 
Since, by the Weierstrass Formula (see above),
\[ \e^{t\anab} f =  {\textstyle \frac 1{2\sqrt{\pi t}}} \int_{-\infty}^\infty \e^{-\frac {s^2}{4t}} \cosabn (s) f \ud s, \qquad f \in \x,  t >0\] 
the limit in \eqref{skew:4} is a particular case of \eqref{skew:6} for $\Psi(s)= \Psi_t (s) \coloneqq   {\textstyle \frac 1{2\sqrt{\pi t}}} \e^{-\frac {s^2}{4t}},$ $ s\in \R, t >0$. The fact that the limit is uniform with respect to $t$ in compact subintervals of $(0,\infty)$ is a consequence of the general result discussed in \cite{note} and \cite{knigaz}*{Thm 28.4}. 

\subsection{Proof of Theorem \ref{thm:weks}}

The following lemma is a key to the proof of convergence of the families $\{\cosabnt, \,t\in\R\}$, as $n\to \infty$   (see Figure \ref{rys_lim}).


\begin{prop}\label{weks:prop}
We have
\begin{equation}\label{weks:5} \gran E^\perp_{n\al,n\be}  f = E_{\alpha,\beta}^{\textnormal{weks}} f, \qquad f \in \xo \end{equation}
where $E_{\alpha,\beta}^\textnormal{weks} f = (\widetilde {f_{\ell}},\widetilde {f_{\textnormal r}})$ is defined by
\begin{align}\label{weks:1.1}
 \widetilde {f_\ell}(x)&= \begin{cases*}f(x),& {for } $x\leq  0$, \\ 
\frac{\al-\be}{\apb} f(-x)-\frac{2\be}{\apb} f(x), & {for } $x> 0.$  \end{cases*}
 \end{align}
and
\begin{align}\label{weks:1.2}
  \widetilde {f_{\textnormal r}}(x)&= \begin{cases*} -\frac{2\al}{\apb} f(x)+\frac{\be-\al}{\apb} f(-x) , & {for } $x< 0$,\\
f(x),& {for } $x\geq  0$,  \end{cases*} \end{align}  
\end{prop}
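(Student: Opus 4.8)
The plan is to imitate the proof of Proposition \ref{skew:ext_prop}: substitute $n\alpha,n\beta$ for $\alpha,\beta$ in the defining formula \eqref{gener:ex} of $E^\perp_{\al,\be}$ and let $n\to\infty$, the only $n$-dependence entering through the kernel $\e^{-n(\alpha+\beta)x}$, which occurs both as a free exponential and inside a convolution. On the half-lines where the two extensions agree with $f$ there is nothing to do, so I would concentrate on the genuinely extended parts, i.e.\ on the first component of $E^\perp_{n\al,n\be}f$ for $x>0$ and on the second component for $x<0$. Writing $\phi\coloneqq \be f-\al f^\es$ for the relevant integrand (a member of $\cerp$ when restricted to $[0,\infty)$, with $\phi(0)=\be f(0+)-\al f(0-)$), formula \eqref{gener:ex} shows that the first component of $E^\perp_{n\al,n\be}f$ equals $-f^\es(x)+T_n(x)$ on $x>0$, where
\[ T_n(x)=-2f(0+)\,\e^{-n(\alpha+\beta)x}-2n\,e_{n(\alpha+\beta)}*\phi\,(x). \]

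The hard part will be the uniform convergence near $x=0$. Because $f\in\xo$ satisfies $f(0+)=-f(0-)$ rather than $f(0+)=f(0-)$, the integrand $\phi$ does \emph{not} vanish at the origin, so Lemma \ref{lem:dirnew}(c) cannot be applied to $\phi$ directly without leaving a persistent boundary layer of height $\phi(0)/(\alpha+\beta)$. I would resolve this by an exact cancellation. Split $\phi=\widetilde\phi+\phi(0)$ with $\widetilde\phi\coloneqq\phi-\phi(0)\in\cerp$ and $\widetilde\phi(0)=0$, and evaluate the convolution against the constant part explicitly:
\[ 2n\,e_{n(\alpha+\beta)}*\phi(0)\,(x)=\tfrac{2\phi(0)}{\alpha+\beta}\bigl(1-\e^{-n(\alpha+\beta)x}\bigr). \]
The opposite-values condition forces $\phi(0)=(\alpha+\beta)f(0+)$, so $\tfrac{2\phi(0)}{\alpha+\beta}=2f(0+)$ and the two $\e^{-n(\alpha+\beta)x}$ contributions cancel identically, leaving the clean expression
\[ T_n(x)=-2f(0+)-2n\,e_{n(\alpha+\beta)}*\widetilde\phi\,(x). \]

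Now Lemma \ref{lem:dirnew}(c), applied to $\widetilde\phi\in\cerp$ with $\widetilde\phi(0)=0$ exactly as in the proof of Proposition \ref{skew:ext_prop}, yields $n\,e_{n(\alpha+\beta)}*\widetilde\phi\to(\alpha+\beta)^{-1}\widetilde\phi$ uniformly on $[0,\infty)$; hence $T_n\to-2(\alpha+\beta)^{-1}\phi$ uniformly, and the first component tends to $-f^\es(x)-2(\alpha+\beta)^{-1}\phi(x)$, which rearranges to $\widetilde{f_\ell}$ of \eqref{weks:1.1}. Since this component coincides with $f$ on $(-\infty,0]$ for every $n$, and since $f(0+)=-f(0-)$ makes the limit continuous at the junction, the convergence holds in the supremum norm of $\cer$. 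The second component is handled by the identical computation — its $n$-dependent part is precisely $-T_n$ — and converges uniformly to $\widetilde{f_{\textnormal r}}$ of \eqref{weks:1.2}. As both coordinates converge in $\cer$, \eqref{weks:5} follows in the norm of $\ce$.
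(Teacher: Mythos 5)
Your proof is correct and follows essentially the same route as the paper: the same function $\phi=\be f-\al f^\es$, the same identity $\phi(0)=(\al+\be)f(0+)$ forced by the opposite-values condition, and the same appeal to Lemma \ref{lem:dirnew} to get uniform convergence on each half-line. The only difference is mechanical: the paper applies part (c) of that lemma \emph{directly} to $\phi$ --- the lemma is stated for arbitrary $\varphi\in\cerp$, and its built-in correction term $\e^{-nax}\varphi(0)$, after division by $\al+\be$, is exactly the free exponential $f(0+)e_{n(\al+\be)}$ appearing in \eqref{gener:ex} --- whereas you re-derive this case by splitting $\phi=\widetilde\phi+\phi(0)$ and computing the constant's convolution by hand, so your parenthetical claim that Lemma \ref{lem:dirnew}(c) ``cannot be applied to $\phi$ directly'' slightly misreads its statement, though it does not affect the validity of your argument.
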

\begin{proof} For $f\in \xo$,  the function $\phi$ defined by $\phi (x) = \be f(x) - \al f(-x), x> 0$, $\phi (0) =\be f(0+) -\al f(0-)=(\alpha+\beta)f(0+)$ belongs to $\cerp$. Therefore, by
Lemma \ref{lem:dirnew} (c), 
\[ f(0+)e_{n(\alpha+\beta)}(x)+n e_{n(\su)} * [\be f-\al f^\es](x) \]
 converges, as $n\to \infty$, to $(\alpha+\beta)^{-1}[\be f(x)-\al f^\es(x)]$ uniformly in $x\ge 0.$
 It follows that the first coordinate of $ E^\perp_{n\al,n\be} f $ ($E^\perp_{\alpha,\beta}$ being defined in Proposition \ref{lem:5.3}) converges in the norm of $\cer$ to $\widetilde {f_{\ell}}$ defined by  \eqref{weks:1.1}.
Similarly, its second coordinate converges to $ \widetilde {f_{\textnormal r}}$ from \eqref{weks:1.2} .
 \end{proof}

Turning to the proof of Theorem \ref{thm:weks}, we note first that its point (a) is a direct consequence of the estimate \eqref{piatka} combined with the fact that 
the norms of $R$ and $\ced(t)$ are equal to $1.$ 

As for point (b), we note that Proposition \ref{weks:prop} yields
\[ \gran \cosabnt (t) f = R \ced (t)  E_{\al,\be}^\textnormal{weks} f,\]
uniformly with respect to $t\in \R$ for all $f\in \xo$. Hence, we are left with identifying the cosine family $ \{R \ced (t)  E_{\al,\be}^\textnormal{weks},\, t \in \R\} $ with $\{J^{-1}\cskewt (t)J,\, t \in \R\}$, that is, with proving that 
\[ R \ced (t)  E_{\al,\be}^\textnormal{weks} = J^{-1}\cskewt (t)J, \qquad t \in \R. \]

To this end, we let $\mc J$ be the isometric automorphism of  $\ce $ given by $\mc J (f_1,f_2) = (-f_1,f_2)$. It is then clear that  $\mathcal{J}$ is its own inverse. Moreover, in view of \eqref{abskew}, to complete the proof we need to establish the following three identities: 
\begin{itemize} 
\item [(i) ]$ J^{-1}R\mc J (f_1,f_2) =R (f_1,f_2),$ provided that $f_1(0)=-f_2(0)$. 
\item [(ii) ] $\mc J\ced (t) \mc J = \ced (t),t\in \R$,
\item [(iii) ] $\mc J E_{\be,\al}^\textnormal{skew} J = E_{\al,\be}^\textnormal{weks}$. 
\end{itemize}
This, however, can be achieved by a straightforward calculation. 

Since point (c) can be obtained from (b) as in Remark \ref{rem:one}, the proof is complete.

\section{Approximation of skew Brownian Motion: convergence of projections}\label{limit}

Here is what we have succeeded in proving so far: in Sections \ref{exten}--\ref{gener} we have established that the invariant spaces of extensions: $\y$ (related to the generator $A_{\al, \be}$) and $\z$ (related to $A_{\al,\be}^\perp$) are complementary. Moreover, projections on $\y$ and $\z$  were proven to be $\p$ and $\q$, respectively.  
Then, in Section \ref{aosb}, we have shown convergence of solution families generated by the operators $A_{n\al, n,\be}$ and $A_{n\al, n,\be}^\perp$, as $n\to \infty$, to the solution families generated by $\askew$ and $\aweks$.

In this, last, section of our paper we provide an epilogue with results concerning convergence of projections: we show that projections $P_{n \al, n\be}$ on the spaces $\mc C_{n\al,n\be}$  related to snapping out Brownian motions, converge strongly to a projection on the space of extensions related to the skew Brownian motion. Here are the details.

\begin{thm}\label{conv_proj1} \ \ 
\begin{itemize}
\item [(a)]
For every $(f_1,f_2)\in \ce$, we have
\begin{align*}
\lim_{n\to \infty} P_{n\al,n\be}(f_1,f_2)=\pp(f_1,f_2),
\end{align*}
where \[\pp(f_1,f_2)\coloneqq \big(f_1^e+\textstyle{\frac{2\al}{\gamma^2}} k_1-\pol k_2, f_2^e+\textstyle{\frac{2\be}{\gamma^2}} k_1+\pol k_2\big)\]
for functions $ k_1,  k_2$ introduced in \eqref{proj:F}.
\item [(b)]
The map $\pp$ is a projection on the space 
\[ \yy \coloneqq \{ (g_1,g_2) \in \ce: g_1^e=g_2^e, \be g_1^o= \al g_2^o \}.\] 
\item [(c)] The space $\yy$ is precisely the subspace of extensions related to the skew Brownian motion  generated by  $\askew$.
\end{itemize}
\end{thm}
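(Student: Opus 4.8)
```latex
\textbf{Approach.} The plan is to prove the three claims in sequence, exploiting the fact that part (a) is essentially a limiting case of the projection formula \eqref{proj:gr} already derived in Section \ref{proj}. Each claim reduces to a manageable computation, and the overall strategy mirrors the pairing of Proposition \ref{weks:prop} with Theorem \ref{thm:weks} from the previous section: first identify the limit operator explicitly, then recognize it as a projection, and finally match its range with the skew Brownian motion extension space.

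\textbf{Step 1: the limit formula (a).} First I would write out $P_{n\al,n\be}(f_1,f_2)$ using \eqref{proj:gr} with $\al,\be$ replaced by $n\al,n\be$. The key observation is that under this substitution one has $\gamma \to \infty$ (since $\gamma = \sqrt{2(\al^2+\be^2)}$ scales linearly in $n$), so the functions $\sinh_\gamma$, $\cosh_\gamma$ and the convolutions against them must be analyzed in this regime. The cleanest route is to use the integral representation \eqref{intro:eqdef_g1}--\eqref{intro:eqdef_g2} for $g_1,g_2$, where the kernels $\e^{\mp \gamma(x-y)}$ become sharply concentrated. As in Proposition \ref{weks:prop}, I expect Lemma \ref{lem:dirnew} (c) to show that the convolution terms $\frac{1}{\gamma}k_1 * (\cdot)$ and $k_2 * (\cdot)$ collapse to pointwise multiples of $k_1/\gamma^2$ and $k_2$, with the surviving contributions giving exactly $f_1^e + \frac{2\al}{\gamma^2}k_1 - \pol k_2$ and $f_2^e + \frac{2\be}{\gamma^2}k_1 + \pol k_2$. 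Here $k_1,k_2$ from \eqref{proj:F} do not depend on $n$ (only the prefactors $\frac{2\al}{\gamma^2},\frac{2\be}{\gamma^2}$ and the membrane parameters in $\gamma$ do), so the limit is well-defined on all of $\ce$, not merely on a subspace; crucially, no continuity hypothesis on $(f_1,f_2)$ is needed because the terms that required $\phi(0)=0$ in Section \ref{aosb} are absent here.

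\textbf{Step 2: projection and range (b).} Given the explicit formula for $\pp$, verifying $\pp^2 = \pp$ is a direct algebraic check: I would compute $k_1,k_2$ for the output pair and confirm they reproduce the same values, using that $(f_1^e)^o = 0$, that $k_1$ is odd and $k_2$ is even (as recorded in the proof of Proposition \ref{raz}), and the relations \eqref{intro:T}. To identify the range as $\yy$, I would show both inclusions. For $\operatorname{Range}(\pp) \subseteq \yy$: from the formula, $g_1^e = f_1^e - \pol k_2$ and $g_2^e = f_2^e + \pol k_2$; since $k_2 = f_1^e - f_2^e$, both equal $\pol(f_1^e+f_2^e)$, giving $g_1^e = g_2^e$. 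The odd parts satisfy $g_1^o = \frac{2\al}{\gamma^2}k_1$ and $g_2^o = \frac{2\be}{\gamma^2}k_1$, whence $\be g_1^o = \al g_2^o$ immediately. The reverse inclusion follows because $\pp$ fixes every element of $\yy$: for $(g_1,g_2)\in\yy$ one checks $\pp(g_1,g_2)=(g_1,g_2)$ by substituting the defining relations of $\yy$ into the formula.

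\textbf{Step 3: matching the skew extension space (c).} Finally I would show $\yy = \operatorname{Range}(E_{\al,\be}^{\textnormal{skew}})$, with $E_{\al,\be}^{\textnormal{skew}}$ from Proposition \ref{skew:ext_prop}. The natural way is to compute the even and odd parts of $(\wtilde{f_\ell},\wtilde{f_r})$ directly from \eqref{skew:4.1}--\eqref{skew:4.2} for $f\in\cer$ and confirm they satisfy $g_1^e=g_2^e$ and $\be g_1^o=\al g_2^o$, giving one inclusion; the reverse follows since $E_{\al,\be}^{\textnormal{skew}}$ is an isomorphism onto its range (by \eqref{abskew} it intertwines the cosine families, so its range is the invariant subspace attached to $\askew$) and both spaces have the same ``degrees of freedom'' — a pair in $\yy$ is determined by $R(g_1,g_2)\in\cer$. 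The main obstacle I anticipate is Step 1: tracking the $\gamma\to\infty$ asymptotics of the hyperbolic-function convolutions carefully enough to land on the exact constants $\frac{2\al}{\gamma^2}$ and $\pol$, rather than something off by a factor. Once the Dirac-approximation lemma is applied correctly to the sharply peaked exponential kernels, Steps 2 and 3 are routine verifications.
```
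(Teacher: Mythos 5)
Your proposal matches the paper's proof essentially step for step: part (a) via the integral representations \eqref{intro:eqdef_g1}--\eqref{intro:eqdef_g2} and the Dirac-sequence Lemma \ref{lem:dirnew} (with the minor correction that parts (a)--(b) of that lemma are what apply here rather than part (c), since the kernels integrate over the full half-lines $(-\infty,x]$ and $[x,\infty)$ against functions in $\cer$ and no boundary term $\varphi(0)\e^{-nax}$ arises), part (b) by computing even and odd parts of $\pp(f_1,f_2)$ and checking that $\pp$ fixes $\yy$, and part (c) by translating the defining relations of the range of $E_{\al,\be}^{\textnormal{skew}}$ into $g_1^e=g_2^e$, $\be g_1^o=\al g_2^o$ and reversing the algebra (your ``determined by $R(g_1,g_2)$'' remark is exactly the paper's ``the reasoning can be reversed'', made legitimate by noting $g_1(0)=g_2(0)$ follows from $g_1^e=g_2^e$). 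The only cosmetic difference is that the paper deduces idempotence of $\pp$ from its being a strong limit of projections, whereas you propose a direct check, which in any case follows from your range-plus-fixed-points argument.
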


\begin{thm} \label{conv_proj2}\ \ 
\begin{itemize}
\item [(a)]
For every $(f_1,f_2)\in \ce$, we have
\begin{align*}
\lim_{n\to \infty} Q_{n\al,n\be}(f_1,f_2)=\qq(f_1,f_2),
\end{align*}
where $\qq\coloneqq I_{\ce} - \pp$.
\item [(b)]
The map $\qq$ is a projection on the space
\[  \zz \coloneqq \{ (g_1,g_2) \in \ce: g_1^e=-g_2^e, \al g_1^o= -\be g_2^o\}.\] 
\item [(c)] The space $\zz$ is the subspace of extensions related to the generator $\aweks$ introduced in Remark \ref{weks:gen}.
\end{itemize}
\end{thm}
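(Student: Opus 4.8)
The plan is to treat the three parts in order, leaning on Theorem~\ref{conv_proj1} throughout. Part (a) needs essentially no work: since $Q_{n\al,n\be}=I_{\ce}-P_{n\al,n\be}$ by \eqref{proj:7} and $\qq=I_{\ce}-\pp$ by definition, the assertion is immediate from Theorem~\ref{conv_proj1}(a). Indeed, for any $(f_1,f_2)\in\ce$,
\[
\lim_{n\to\infty}Q_{n\al,n\be}(f_1,f_2)=(f_1,f_2)-\lim_{n\to\infty}P_{n\al,n\be}(f_1,f_2)=(f_1,f_2)-\pp(f_1,f_2)=\qq(f_1,f_2).
\]

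For part (b), I would first observe that $\qq$ is a projection because $\pp$ is: the complement $I_{\ce}-\pp$ of an idempotent is idempotent. It then remains to identify $\mathrm{range}(\qq)=\ker(\pp)$ with $\zz$. I would compute $\qq(f_1,f_2)=(f_1,f_2)-\pp(f_1,f_2)$ explicitly from the formula for $\pp$ in Theorem~\ref{conv_proj1}(a); writing $f_i=f_i^e+f_i^o$ and using that $k_1$ is odd while $k_2$ is even (see \eqref{proj:F}), the two coordinates of $\qq(f_1,f_2)$ have even parts $\pm\pol k_2$ and odd parts $f_1^o-\textstyle{\frac{2\al}{\gamma^2}}k_1$, $f_2^o-\textstyle{\frac{2\be}{\gamma^2}}k_1$. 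Hence the first defining relation of $\zz$, namely $g_1^e=-g_2^e$, holds automatically, while the second, $\al g_1^o=-\be g_2^o$, reduces to the single identity $\textstyle{\frac{2(\al^2+\be^2)}{\gamma^2}}=1$, which is exactly $\gamma^2=2(\al^2+\be^2)$; this gives $\mathrm{range}(\qq)\subseteq\zz$. For the reverse inclusion I would show that $\pp$ annihilates $\zz$: for $(g_1,g_2)\in\zz$ one has $k_2=g_1^e-g_2^e=2g_1^e$ and, crucially, $k_1=\al g_1^o+\be g_2^o=0$ (this is precisely $\al g_1^o=-\be g_2^o$), so the formula for $\pp$ collapses to $(0,0)$. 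Thus $\zz\subseteq\ker(\pp)=\mathrm{range}(\qq)$, and the two inclusions yield $\mathrm{range}(\qq)=\zz$.

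Part (c) carries the actual content. The subspace of extensions related to $\aweks$ is the range of $E_{\al,\be}^{\textnormal{weks}}$ from Proposition~\ref{weks:prop} (cf. the Kelvin-type formula in Theorem~\ref{thm:weks}(b) and Remark~\ref{weks:gen}), so the task is to prove $\mathrm{range}(E_{\al,\be}^{\textnormal{weks}})=\zz$. For the inclusion $\subseteq$ I would fix $f\in\xo$, set $(g_1,g_2)=E_{\al,\be}^{\textnormal{weks}}f$, and compute the even and odd parts of $\widetilde{f_\ell},\widetilde{f_{\textnormal r}}$ directly from \eqref{weks:1.1}--\eqref{weks:1.2}. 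A short calculation gives, for $x>0$, $g_1^o(x)=-\textstyle{\frac{\be}{\apb}}(f(x)+f(-x))$ and $g_2^o(x)=\textstyle{\frac{\al}{\apb}}(f(x)+f(-x))$, so that $\al g_1^o=-\be g_2^o$ at a glance, and similarly $g_1^e(x)=\textstyle{\frac{\al f(-x)-\be f(x)}{\apb}}=-g_2^e(x)$. Hence $(g_1,g_2)\in\zz$.

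For the reverse inclusion I would use the restriction operator $R$ of Section~\ref{tro} as a right inverse. Given $(g_1,g_2)\in\zz$, put $f=R(g_1,g_2)$; evaluating $g_1^e=-g_2^e$ at $0$ gives $g_1(0)=-g_2(0)$, whence $f(0+)=-f(0-)$ and $f\in\xo$. The point then is to recover the values of $g_1$ on $(0,\infty)$ and of $g_2$ on $(-\infty,0)$ from the two known branches $g_1|_{(-\infty,0]}$ and $g_2|_{[0,\infty)}$; this is the main obstacle, but it is a clean $2\times2$ linear system. Rewriting the two defining relations of $\zz$ for $x>0$ as $g_1(x)+g_2(-x)=-(g_1(-x)+g_2(x))$ and $\al(g_1(x)-g_1(-x))=-\be(g_2(x)-g_2(-x))$, and solving the nonsingular system (the determinant is $\apb>0$) for the two unknowns $g_1(x)$ and $g_2(-x)$, reproduces exactly \eqref{weks:1.1}--\eqref{weks:1.2}. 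Therefore $E_{\al,\be}^{\textnormal{weks}}(R(g_1,g_2))=(g_1,g_2)$, the range of $E_{\al,\be}^{\textnormal{weks}}$ is all of $\zz$, and (c) follows.
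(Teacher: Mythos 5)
Your proposal is correct, and parts (a) and (b) coincide in substance with the paper's own argument: (a) is read off from Theorem \ref{conv_proj1}(a) exactly as you do, and in (b) your two computations --- that the range of $\qq$ lands in $\zz$ (using that $k_1$ is odd, $k_2$ is even, and $\gamma^2=2(\al^2+\be^2)$), and that $\pp$ annihilates $\zz$ (since on $\zz$ one has $k_1=\al g_1^o+\be g_2^o=0$ and $k_2=2g_1^e$) --- are precisely the paper's, which merely phrases the second fact in the equivalent form that $\qq$ fixes every element of $\zz$. The genuine difference is in (c). The paper gets (c) almost for free from symmetry: it observes that $\zz=\mc J\bigl[\mathcal{C}_{\be,\al}^{\textnormal{skew}}\bigr]$ for the isometric involution $\mc J(f_1,f_2)=(-f_1,f_2)$, and then transfers Theorem \ref{conv_proj1}(c), applied with the roles of $\al$ and $\be$ reversed, through the intertwining identity $\mc J E_{\be,\al}^{\textnormal{skew}}J=E_{\al,\be}^{\textnormal{weks}}$ already established in the proof of Theorem \ref{thm:weks}. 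You instead re-prove the statement from scratch: even/odd bookkeeping gives $\mathrm{range}\bigl(E_{\al,\be}^{\textnormal{weks}}\bigr)\subseteq\zz$, and solving the $2\times 2$ linear system shows that $E_{\al,\be}^{\textnormal{weks}}R$ is the identity on $\zz$; this is exactly the pattern the paper uses for the \emph{skew} case inside the proof of Theorem \ref{conv_proj1}(c), transplanted by you to the weks case. Your route is self-contained and does not require the identities (i)--(iii) from the proof of Theorem \ref{thm:weks}; the paper's route is shorter and makes the mirror-image structure (swap $\al\leftrightarrow\be$, flip the sign on the left half-line) conceptually explicit. One cosmetic point: the determinant of your $2\times 2$ system is $\pm(\al+\be)$ depending on how the equations and unknowns are ordered; what matters, and what you correctly use, is that it is nonzero because $\apb>0$.
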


\begin{cor}\label{tcztery} The space $\ce $ is a direct sum of two spaces that are invariant under the basic Cartesian product cosine family: 
\begin{align*} \ce = \yy \oplus \zz .\end{align*} \end{cor}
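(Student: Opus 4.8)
The plan is to assemble Corollary \ref{tcztery} from the pieces already in place, so that the proof reduces to a short bookkeeping argument. The two decomposition facts I need are: first, that $\ce = \yy \oplus \zz$ as a direct sum of subspaces; second, that each summand is invariant under $\{\ced(t),\, t\in\R\}$. I would derive the direct-sum statement as a limit of the decomposition $\ce = \y \oplus \z$ from Theorem \ref{ttrzy}, combined with the convergence of the projections in Theorems \ref{conv_proj1} and \ref{conv_proj2}.

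First I would observe that by Theorem \ref{conv_proj1}(a) and Theorem \ref{conv_proj2}(a), the projections $P_{n\al,n\be}$ and $Q_{n\al,n\be}$ converge strongly to $\pp$ and $\qq$, respectively. Since $\pp + \qq = I_{\ce}$ by the definition $\qq \coloneqq I_{\ce} - \pp$, every $(f_1,f_2)\in\ce$ decomposes as $\pp(f_1,f_2) + \qq(f_1,f_2)$, which by Theorem \ref{conv_proj1}(b) and Theorem \ref{conv_proj2}(b) lands in $\yy + \zz$; hence $\ce = \yy + \zz$. To see the sum is direct, I would use that $\pp$ is a projection on $\yy$ (Theorem \ref{conv_proj1}(b)), so that $\pp$ restricted to $\yy$ is the identity while $\pp$ annihilates $\zz$ (as $\zz$ is the range of $\qq = I_{\ce}-\pp$). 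Thus if $(g_1,g_2)\in \yy \cap \zz$, applying $\pp$ gives both $(g_1,g_2)$ and $0$, forcing $(g_1,g_2)=0$. This establishes $\ce = \yy \oplus \zz$.

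For invariance, the cleanest route is to obtain it as a limit property. By Theorem \ref{conv_proj1}(c), $\yy$ is the subspace of extensions related to $\askew$, and the generation theorem \eqref{abskew} together with the identification $\gran \cosabn(t)f = R\ced(t) E^{\textnormal{skew}}_{\al,\be}f$ shows that $E^{\textnormal{skew}}_{\al,\be}$ intertwines $\ced(t)$ with $\cskew(t)$; consequently the range $\yy$ of $E^{\textnormal{skew}}_{\al,\be}$ is invariant under $\{\ced(t),\, t\in\R\}$. Alternatively, one argues directly: since $\pp = \gran P_{n\al,n\be}$ strongly and each $P_{n\al,n\be}$ commutes with $\ced(t)$ (as its range $\mc C_{n\al,n\be}$ is invariant, per Corollary \ref{cor:ab1}), the strong limit $\pp$ also commutes with $\ced(t)$, and therefore its range $\yy$ is $\ced$-invariant. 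The identical argument applied to $\qq$ and $Q_{n\al,n\be}$ gives invariance of $\zz$.

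I expect the main obstacle to be the commutation-passes-to-the-limit step: one must justify that $\ced(t)\pp = \pp\ced(t)$ follows from $\ced(t)P_{n\al,n\be} = P_{n\al,n\be}\ced(t)$ together with strong convergence $P_{n\al,n\be}\to\pp$. This is routine but needs the boundedness of $\ced(t)$ (which has norm $1$) so that $\ced(t)P_{n\al,n\be}(f_1,f_2)\to \ced(t)\pp(f_1,f_2)$ and $P_{n\al,n\be}\ced(t)(f_1,f_2)\to \pp\ced(t)(f_1,f_2)$, both limits being legitimate by continuity of $\ced(t)$ on the one hand and by the strong convergence applied to the vector $\ced(t)(f_1,f_2)$ on the other. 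Given this, everything else is formal, and I would present the proof in the two clean steps of directness and invariance, citing Theorems \ref{ttrzy}, \ref{conv_proj1}, and \ref{conv_proj2} for the heavy lifting.
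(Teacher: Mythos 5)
Your direct-sum half is correct and is essentially the argument the paper leaves implicit: $\pp+\qq=I_{\ce}$, Theorems \ref{conv_proj1}(b) and \ref{conv_proj2}(b) identify the ranges of $\pp$ and $\qq$ as $\yy$ and $\zz$, the identity $\pp\qq=\pp-\pp^2=0$ shows $\pp$ annihilates $\zz$ while acting as the identity on $\yy$, and so $\yy\cap\zz=\{0\}$. No objection there.

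The invariance half, however, has a genuine gap in both routes you offer. Route one is circular: the Kelvin formula \eqref{abskew}, $\cskew(t)=R\ced(t)E_{\al,\be}^{\textnormal{skew}}$, does not by itself give the intertwining $\ced(t)E_{\al,\be}^{\textnormal{skew}}=E_{\al,\be}^{\textnormal{skew}}\cskew(t)$; to upgrade $R\ced(t)E_{\al,\be}^{\textnormal{skew}}f$ to $\ced(t)E_{\al,\be}^{\textnormal{skew}}f$ one must apply $E_{\al,\be}^{\textnormal{skew}}$ and use that $E_{\al,\be}^{\textnormal{skew}}R$ is the identity \emph{on $\yy$}, which presupposes that $\ced(t)E_{\al,\be}^{\textnormal{skew}}f$ already lies in $\yy$ --- exactly the invariance you are trying to prove. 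Route two contains a non sequitur at the step you did \emph{not} flag as the obstacle: ``$P_{n\al,n\be}$ commutes with $\ced(t)$ as its range $\mc C_{n\al,n\be}$ is invariant.'' Invariance of the range alone never forces a projection to commute with an operator; consider the shear $(x,y)\mapsto(x+y,y)$ on $\R^2$ and the projection $(x,y)\mapsto(x,0)$: the range (the first axis) is invariant, yet the two maps fail to commute. A projection commutes with $\ced(t)$ precisely when \emph{both} its range and its kernel are invariant. The missing ingredient is available in the paper: by Theorem \ref{ttrzy} the kernel of $P_{n\al,n\be}$ is $\mc D_{n\al,n\be}$, whose invariance is Theorem \ref{thm:inv}; citing it repairs your argument, after which your passage to the strong limit (which you do justify correctly) yields $\ced(t)\pp=\pp\ced(t)$ and hence invariance of $\yy$, and likewise for $\zz$. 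Note also that all this machinery is avoidable: by Theorems \ref{conv_proj1}(b) and \ref{conv_proj2}(b), $\yy$ and $\zz$ are cut out by the conditions $g_1^e=g_2^e$, $\be g_1^o=\al g_2^o$ and $g_1^e=-g_2^e$, $\al g_1^o=-\be g_2^o$, and by \eqref{tran} the operators $C(t)$ commute with taking even and odd parts, so $\ced(t)$ preserves these conditions verbatim --- a one-line proof of invariance, which is what the paper intends.
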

Therefore, limits of complementary transmission conditions \eqref{intro:bc} and \eqref{intro:bc2} are also complementary.

\subsection{Proof of Theorem \ref{conv_proj1} } \ \ 
\textbf{ (a)} From \eqref{intro:eqdef_g1}
 it follows that the first coordinate of $P_{n\al,n\be}(f_1,f_2)$ is
\begin{align*}
g_{1,n}(x) & = \fle^e(x) +\pol(\gamma+2\al)n \int_{-\infty}^x \big[\textstyle{\frac 1\gamma} k_1(y)-\pol  k_2(y) \big]\e^ {-n\sq  (x-y)}   \ud y \nonumber \\
 &\phantom{=} -\pol(\gamma-2\al) n \int_x^{\infty} \big[\textstyle{\frac 1\gamma} k_1(y)+\pol  k_2(y) \big]\e^ {n\sq (x-y)}   \ud y , \qquad x\in \R.  \end{align*}
Lemma \ref{lem:dirnew} in Appendix implies now that $g_{1,n}$ converges in the norm of $\cer$, as $n\to \infty$, to 
$
f_1^e+\frac 1\gamma (\frac{2\al}{\gamma} k_1-\frac{\gamma}{2} k_2)$, as desired.

For the proof of convergence of the second coordinate of $P_{n\al,n\be}(f_1,f_2)$, denoted  $g_{2,n}$,  we proceed analogously. Namely, by \eqref{intro:eqdef_g2}, we have
\begin{align*}
g_{2,n}(x) &= \fre^e(x) +\pol (\gamma+2\be)n \int_x^\infty  \big[\textstyle{\frac 1 \gamma} k_1(y)+\pol  k_2(y) \big]\e^{n \sq (x-y)}   \ud y \\
 &\phantom{=} -\pol (\gamma-2\be )n \int_{-\infty}^x \big[\textstyle{\frac 1 \gamma} k_1(y)-\pol  k_2(y) \big]\e^{-n\sq (x-y)}   \ud y , \qquad x\in \R.   
 \end{align*}
Hence, by Lemma \ref{lem:dirnew}, $g_{2,n}$ converges in the norm of $\cer$ to 
$f_2^e+\frac 1\gamma (\frac{2\be}{\gamma}h_1+\frac{\gamma}{2}h_2)$.

\textbf{ (b)} {As a strong limit of projections, $\pp$ is a projection also. To characterize its range we} consider $(f_1,f_2) \in \ce$ and $(g_1,g_2)\coloneqq \pp(f_1,f_2) $.
Since $f_1^e,f_2^e,$ $ k_1$ and $ k_2$ all belong to $\cer$, it follows that so do $g_1$ and $g_2 $. Furthermore,
$g_1^e=f_1^e-\pol  k_2=\pol(f_1^e+f_2^e)=f_2^e+\pol  k_2=g_2^e$.
Similarly, we see that $g_1^o=\frac{2\al}{\gamma^2}  k_1$ and $g_2^o=\frac{2\be}{\gamma^2}  k_1$. 
This shows  that $(\gl,\gr) \in \yy$, that is, that the range of $\pp$ is contained in $\yy$. 

To prove the other inclusion, we assume that  $\be f_1^o= \al f_2^o$ and $f_1^e = f_2^e.$
Then, $2\alpha  k_1 = \gamma^2 f_1^o$, $2\beta  k_1 = \gamma^2 f_2^o$ and $ k_2=0$. Hence the definition of $\gl$ and $\gr$  simplifies to $\gl =f_1^e+f_1^o=f_1$ and $\gr = f_2^e+f_2^o=f_2$. It follows that  the range of $\pp$ contains $\yy$ and the proof of (b) is complete. 

\textbf{ (c)} We observe that the form of the operator $E_{\alpha,\beta}^\textnormal{skew} $ found in Proposition \ref{skew:ext_prop} enables us to characterize the subspace  of extensions related to the skew Brownian motion as the space of  $(\gl,\gr)\in\ce$ satisfying $\gl(x)= \frac{\be-\al}{\apb} \gl(-x)+\frac{2\al}{\apb} \gr(x)$ and $ \gr(-x)=\frac{2\be}{\apb} \gl(-x)+\frac{\al-\be}{\apb} \gr(x) $ for $x\geq 0$.
These relations can be rewritten as
\begin{align*}
\be g_1^o&= \al( g_2-g_1^e),\qquad \al g_2^o= \be( g_2^e-g_1^\es) \quad  \textrm{on } \R^+. 
\end{align*}
Since for any $g\in \mathfrak C(\R)$ we have $g^o+g^e=g$ and $g^e-g^o=g^\es $, subtracting the above equations yields  $g_1^e=g_2^e$ on $\R^+$ and from the first equation we obtain $  \be g_1^o= \al g_2^o$ on $\R^+$.
By \eqref{intro:T}, these relations must in fact hold on $\R$. 
Since the above reasoning can be reversed, it follows that  the space of extensions equals $\yy$. 

\subsection{Proof of Theorem \ref{conv_proj2}}\ \ \ \ \ \ \ 
\textbf{ (a)} This point is an immediate consequence of Theorem \ref{conv_proj1} (a). 

\textbf{ (b)} Let $(f_1,f_2) \in \ce$ and $(g_1,g_2)\coloneqq \qq(f_1,f_2) $, that is,
$g_1=f_1^o-\textstyle{\frac{2\al}{\gamma^2}} k_1+\pol k_2$ and $g_2= f_2^o-\textstyle{\frac{2\be}{\gamma^2}} k_1-\pol k_2$.
The fact that $g_1,g_2\in \cer$ follows from point (b) in Theorem \ref{conv_proj1}.
As in  the proof of that result, we obtain
$\gl^e=\pol(f_1^e-f_2^e)=-\gr^e$ and $\al g_1^o=\frac{2\al\be}{\gamma^2}(\be f_1 ^o-\al f_2^o)= -\be g_2^o$ 
implying that $(\gl,\gr) \in \zz.$ 
Moreover, it is easy to check that, if  $(f_1,f_2)\in \zz $, then $\gl=f_1$ and $\gr= f_2$.

\textbf{ (c)}  Since $\zz= \mc J[ \mc C_{\be,\al}^{\textnormal{skew}}]$, the result follows from point (c) in Theorem \ref{conv_proj1} combined with Remark \ref{weks:gen} and the formula $\mc J E_{\be,\al}^\textnormal{skew} J = E_{\al,\be}^\textnormal{weks}$ established in the proof of Theorem \ref{thm:weks}.

\section{Appendix}

The following lemma is used in proving convergence theorems of Sections \ref{aosb}-\ref{limit}.
\begin{lem}\label{lem:dirnew}Let $a>0$, $\phi \in\cer$ and $\varphi \in \cerp$ be given. Then, 
\begin{itemize}
\item [(a)] $\displaystyle \lim_{n \to \infty} \Big [ n a \int_{-\infty}^x \e^{-n a (x-y)}\phi (y) \ud y \Big ]= \phi (x), $ \\
\item [(b)] $ \displaystyle \lim_{n \to \infty} \Big [ n a \int_x^{\infty} \e^{n a (x-y)}\phi (y) \ud y \Big ]= \phi (x) 
$, \item [(c)] \( \displaystyle  \lim_{n \to \infty} \left [ na \int_0^x \e^{-na (x-y)}\varphi (y) \ud y + \e^{-na x}\varphi (0)\right ]= \varphi (x). \)
\end{itemize}
The first two limits are uniform with respect to $x\in \R$, the third is uniform with respect to $x\ge 0.$
\end{lem}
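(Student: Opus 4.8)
The plan is to recognize all three limits as instances of the single fact that exponential densities with large rate concentrate at the origin, so that integration against them acts as an approximate identity. The unifying device is a change of variables turning each integral into an average of $\phi$ (resp. $\varphi$) against the standard exponential density $e^{-u}\ud u$, after which everything reduces to a uniform-continuity argument. The structural input making this work is that every member of $\cer$ extends to a continuous function on the compact space $[-\infty,\infty]$, and every member of $\cerp$ to a continuous function on $[0,\infty]$, and is therefore uniformly continuous.

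For (a), I would substitute $u = na(x-y)$, which recasts the left-hand side as $\int_0^\infty e^{-u}\phi(x-u/(na))\ud u$. Since $\int_0^\infty e^{-u}\ud u = 1$, the difference from $\phi(x)$ equals $\int_0^\infty e^{-u}[\phi(x-u/(na))-\phi(x)]\ud u$. Given $\eps>0$, I would pick $\delta$ from uniform continuity of $\phi$ and split the integral at $u=na\delta$: on $[0,na\delta]$ the integrand is bounded by $\eps e^{-u}$, while on $(na\delta,\infty)$ it is bounded by $2\norm{\phi}e^{-u}$, whose integral is $2\norm{\phi}e^{-na\delta}\to 0$. Both bounds are independent of $x$, giving uniform convergence. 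Statement (b) then follows from (a) with no further work: the substitution $y\mapsto -y$ together with $w\coloneqq -x$ turns the integral in (b) into $na\int_{-\infty}^{w}e^{-na(w-z)}\phi^\es(z)\ud z$, which by (a), applied to $\phi^\es\in\cer$, converges to $\phi^\es(w)=\phi(x)$ uniformly in $w$, hence uniformly in $x$.

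The only genuinely new feature is in (c), where the lower limit is finite and the boundary term $e^{-nax}\varphi(0)$ appears. The same substitution $u=na(x-y)$ gives $\int_0^{nax}e^{-u}\varphi(x-u/(na))\ud u$, and I would rewrite the boundary term as $\varphi(0)\int_{nax}^\infty e^{-u}\ud u$. Subtracting $\varphi(x)=\varphi(x)\int_0^\infty e^{-u}\ud u$ splits the error into $\int_0^{nax}e^{-u}[\varphi(x-u/(na))-\varphi(x)]\ud u + [\varphi(0)-\varphi(x)]e^{-nax}$. The first term is handled exactly as in (a), now using uniform continuity of $\varphi$ on the compact $[0,\infty]$ and the fact that the argument $x-u/(na)$ stays in $[0,x]$. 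I expect the main obstacle, and the very reason the correction term is built into the statement, to be controlling the second term uniformly near $x=0$, where $e^{-nax}$ does not decay. The resolution is to split on $x$ rather than on $u$: for $x\le\delta$ uniform continuity gives $\abs{\varphi(0)-\varphi(x)}<\eps$ while $e^{-nax}\le 1$, and for $x>\delta$ one has $e^{-nax}\le e^{-na\delta}\to 0$ against the crude bound $\abs{\varphi(0)-\varphi(x)}\le 2\norm{\varphi}$. This two-regime estimate is what secures uniformity on all of $[0,\infty)$ and completes the proof.
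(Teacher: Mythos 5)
Your proof is correct, and for parts (a) and (b) it is essentially the paper's own argument: the same Dirac-sequence estimate (uniform continuity of $\phi$ as a continuous function on the compact $[-\infty,\infty]$, splitting the integral at a threshold proportional to $na\delta$), and the same reduction of (b) to (a) by passing to $\phi^\es$ evaluated at $-x$. The only genuine divergence is in (c). The paper disposes of it in one line: apply (a) to the extension $\phi\in\cer$ defined by $\phi(y)=\varphi(y)$ for $y\ge 0$ and $\phi(y)=\varphi(0)$ for $y<0$; the correction term $\e^{-nax}\varphi(0)$ is then exactly the contribution $na\int_{-\infty}^0 \e^{-na(x-y)}\varphi(0)\ud y$, so (c) is literally a special case of (a) restricted to $x\ge 0$, with no new estimates needed. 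You instead prove (c) directly, rewriting the boundary term as $\varphi(0)\int_{nax}^\infty \e^{-u}\ud u$ and controlling the residual $[\varphi(0)-\varphi(x)]\e^{-nax}$ by a two-regime split in $x$ (small $x$ via uniform continuity, large $x$ via exponential decay). Both routes are sound; the paper's reduction buys brevity and makes transparent why the correction term has precisely that form, while your computation is self-contained and isolates explicitly the one place where uniformity near $x=0$ could fail --- which is a useful thing to have spelled out, but at the cost of redoing estimates that the constant-extension trick gets for free.
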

\begin{proof} To prove (a),  we apply a typical argument involving so-called Dirac sequences (see e.g. \cite{lang}*{pp. 227--235} or \cite{feller}*{pp. 219--220}). Namely, given $\eps>0$ we can find a $\delta$ such that $|\phi(x)-\phi(y)|< \frac \eps 2$ as long as $|x-y|< \delta$; this is because members of $\cer$ are uniformly continuous. Having chosen such a $\delta$ we can also find an $n_0$ so large that $\e^{-na \delta} \max (1,2\|\phi\|) < \frac \eps 2$, provided that $n>n_0$, where $\|\phi\|\coloneqq \|\phi\|_{\cer}$.

Next, for any $x \in \R$ we have 
\[ na \int_{-\infty}^x \e^{-na (x-y)} \phi (y) \ud y - \phi (x) =  na \int_0^{\infty} \e^{-na y} [\phi (x-y) -\phi (x)] \ud y .\]
It follows that for $n>n_0$, the absolute value of the left-hand side above does not exceed 
\[ na \int_0^\delta \e^{-na y} {\textstyle \frac \eps 2} \ud y +   na \int_\delta^\infty \e^{-na y} 2\|\phi\| \ud y  = 
{\textstyle \frac \eps 2}(1 - \e^{-na \delta})   + 2 \e^{-na \delta}\| \phi\| < \eps .\]
This completes the proof, $\eps >0$ being arbitrary. 

Condition (b) is  (a) in disguise: (a) becomes (b) if $\phi$ and $x$ are replaced by $\phi^\es$ and $-x$, respectively. Likewise, to obtain (c) we use (a) for $\phi\in \cer$ defined as follows: $\phi (x) = \varphi (x), x \ge 0$ and $\phi (x) = \varphi (0), x <0.$ 
\end{proof}


\bibliographystyle{plain}
\bibliography{../../../bibliografia}

\end{document}